\newcommand{\Hc}{{\mathcal H}}
\newcommand{\HH}{{\mathcal H}}
\newcommand{\li}[1]{\overline{#1}}
\newcommand{\dG}{{\underline{\underline{G}}}}
\newcommand{\otb}{{\overline{\otimes}}}
\newcommand{\Mo}{{\mathcal M}}
\newcommand{\No}{{\mathcal N}}
\newcommand{\Cc}{{\mathcal C}}
\newcommand{\bca}{\overline{\mathcal D}}
\newcommand{\ot}{{\otimes}}
\newcommand{\Zc}{{\mathcal Z}}
\newcommand{\ca}{{\mathcal C}}
\newcommand{\Do}{{\mathcal D}}
\newcommand{\Bc}{{\mathcal B}}
\newcommand{\Fc}{{\mathcal F}}
\newcommand{\Gc}{{\mathcal G}}
\newcommand{\opp}{\rm{op}}
\newcommand{\camod}{{}_{\ca}\rm{Mod}}
\newcommand{\bb}{{\mathcal B}}
\newcommand{\ku}{{\Bbbk}}
\newcommand{\uno}{ \mathbf{1}}
\newcommand{\id}{\mbox{\rm id\,}}
\newcommand{\Id}{\mbox{\rm Id\,}}
\newcommand{\idn}{\mbox{\rm id\,}}
\newcommand{\doscat}{\textbf{2Cat}}
\newcommand{\Obj}{\mbox{\rm Obj\,}}
\newcommand{\Mod}{\mbox{\rm Mod\,}}
\newcommand{\Fun}{\operatorname{Fun}}
\newcommand{\Pse}{\textbf{2Cat}}
\newcommand{\unoli}[1]{1_{\overline{#1}}}
\newcommand{\unos}[1]{1_{\widetilde{#1}}}
\newcommand{\s}[1]{\widetilde{#1}}
\theoremstyle{plain}
\numberwithin{equation}{section}
\newtheorem{teo}{Theorem}[section]
\newtheorem{lema}[teo]{Lemma}
\newtheorem{cor}[teo]{Corollary}
\newtheorem{prop}[teo]{Proposition}
\newtheorem{claim}{Claim}[section]
\theoremstyle{definition}
\newtheorem{defi}[teo]{Definition}
  \newtheorem{exa}[teo]{Example}
\theoremstyle{remark}
\newtheorem{rmk}[teo]{Remark}
\def\pf{\begin{proof}}
\def\epf{\end{proof}}
\theoremstyle{remark}
\newcounter{commentcounter}
\begin{document}

\title[Group actions on 2-categories]{Group actions on 2-categories}
\author[ Bernaschini , Galindo, Mombelli ]{
Eugenia Bernaschini, C\'esar Galindo and Mart\'\i n Mombelli
}
\keywords{tensor category; module category; bicategory}
\address{Facultad de Matem\'atica, Astronom\'\i a y F\'\i sica
\newline \indent
Universidad Nacional de C\'ordoba
\newline
\indent CIEM -- CONICET
\newline \indent Medina Allende s/n
\newline
\indent (5000) Ciudad Universitaria, C\'ordoba, Argentina} \email{ m.e.bernaschini@gmail.com }
 \email{mombelli@mate.uncor.edu
\newline \indent \emph{URL:}\/ http://www.famaf.unc.edu.ar/$\sim$mombelli}
\address{ Departamento de Matem\'aticas, Universidad de los Andes, 
\newline \indent
Carrera 1 N. 18A - 10, Bogot\'a, Colombia }
\email{cn.galindo1116@uniandes.edu.co}
\begin{abstract}
We study  actions of discrete groups on 2-categories. The motivating examples are actions on the 2-category of representations of finite tensor categories and their relation with the extension theory of tensor categories by groups. Associated to a group action on a 2-category, we construct the 2-category of equivariant objects. We also introduce the $G$-equivariant notions of pseudofunctor, pseudonatural transformation and modification. Our first main result is a coherence theorem for 2-categories with an action of a group. For a 2-category $\bb$ with an action of a group $G$, we construct a braided $G$-crossed monoidal category $\mathcal{Z}_G(\bb)$ with trivial component the Drinfeld center of $\bb$. We  prove that, in the case of a $G$-action on the 2-category of representation of a tensor category $\ca$, the 2-category of equivariant objects is biequivalent to the module categories over an  associated $G$-extension of $\ca$. Finally, we prove that the center of the equivariant 2-category is monoidally equivalent to the equivariantization of a relative center, generalizing  results obtained in \cite{GNN}.

\end{abstract}

\subjclass[2000]{18D05, 18D10}

\date{\today}
\maketitle

\section*{Introduction}

The theory of 2-categories appears in a natural way in diverse contexts.
For example, it was used by Rouquier to ``categorify'' certain algebraic objects  \cite{R} and appears in topological field
theories \cite{FSV}, \cite{NS}. The theory of representations of 2-categories has been initiated 
in a series of papers \cite{MM1, MM2, MM3}.

Our motivation for the study of 2-categories comes from  the theory of tensor categories.
For a tensor category $\ca$, a representation of $\ca$, or $\ca$-module category, is a category $\Mo$
equipped with an associative action $\ca\times \Mo\to \Mo$ satisfying certain conditions. Given two
$\ca$-module categories $\Mo, \No$, the category $\Fun_\ca(\Mo,\No)$ is the category whose objects are
 $\ca$-module functor between $\Mo$ and $\No$, and morphisms are  $\ca$-module natural transformations. The 2-category of (left) 
$\ca$-modules $\camod$ has as 0-cells $\ca$-module categories, 1-cells $\ca$-module functors between them and
2-cells are $\ca$-module natural transformations. This 2-category is a strong invariant of the tensor category $\ca$.

Given a 2-category $\Bc$ and a 2-monad  $T:\Bc\to \Bc$ on $\Bc$, in \cite{MN}, the notion of the  \textit{equivariantization} 2-category $\Bc^T$ was presented. The equivariantization of a 2-category by a group was  studied later in \cite{HSV}.
 
One of the purposes of the paper is to explicitly describe an action of a group $G$ on a 2-category $\Bc$, and describe all ingredients of the resulting equivariantization 2-category $\Bc^G$. An action of a group $G$ on a 2-category $\Bc$ consists of
\begin{itemize}
 \item a family of pseudofunctors $F_g:\Bc\to \Bc$, $g\in G$,
 \item pseudonatural equivalences $\chi_{g,h}: F_g\circ F_h\to F_{gh}$, 
 \item invertible modifications 
 $$\omega_{g,h,f}: \chi_{gh,f} \circ (\chi_{g,h} \ot  \idn_{F_f }) \Rightarrow \chi_{g,hf}\circ (\idn_{F_g} \ot\chi_{h,f}), $$
\end{itemize}
for any $g, h, f\in G$, satisfying certain axioms.
We also prove a coherence theorem for group action, stating that there exists another equivalent action of $G$ on $\Bc$, such that all pseudofunctors $F_g$ involved in the group action are 2-functors, $ F_g\circ F_h= F_{gh}$, and $\chi_{g,h}$, $\omega_{g,h,f}$ are all the identity. As an application of the coherent theorem we prove that associated to every action of group $G$ on a 2-category $\bb$ there is a braided $G$-crossed monoidal category $\mathcal{Z}_G(\bb)$ such that the trivial component is $\mathcal{Z}(\bb)$, the Drinfeld center of $\bb$. 

An important example comes from the theory of tensor categories. We show that, if $\Do=\oplus_{g\in G} \Do_g$ is a $G$-graded tensor category, and $\Do_1=\ca$, there is an action of the group $G$ acts on ${}_{\ca}\Mod$, the 2-category of representations of $\ca$, and there is a biequivalence
$$({}_{\ca}\Mod^{\opp})^G \simeq {}_{\Do}\Mod.$$
The coherence theorem for group actions allows us to construct an associated strict braided crossed monoidal category  and to prove that there is a monoidal equivalence  between the center $\Zc(\Bc^G)$ of the equivariantization and the monoidal category of pseudonatural transformations of the forgetful pseudofunctor $\Phi:\Bc^G\to \Bc$. When applied this result to the 2-category $({}_{\ca}\Mod)^G $, we recover the results from \cite{GNN}, on the center of graded tensor categories.
\medbreak

The contents of the paper are organized as follows.  In Section \ref{Section:2-cat} we recall the basics of 2-categories. For any pseudofunctor $\Hc:\Bc\to \Bc'$ we define the monoidal category $\Zc(\Hc)$ of pseudonatural transformations $\eta:\Hc\to \Hc$. When $\Hc$ is the identity pseudofunctor, $\Zc(\Id)$ is a braided monoidal category  called the \textit{center} of the 2-category.

In Section \ref{Section:group-2-cat} we explicitly describe the notion of a group action on a 2-category. Given two 2-categories $\Bc,\Bc'$ equipped with an action of a group $G$, we define the notion of $G$-pseudofunctor between them. When a $G$-pseudofunctor is a biequivalence, we say that  $\Bc,\Bc'$ are $G$-biequivalent. Also, we define the notions of $G$-pseudonatural transformation and $G$-modifications. All these data, turns out to be a 2-category, denoted by $\Pse^G(\Bc,\Bc')$. 
The equivariant 2-category is $\Bc^G=\Pse^G(\mathcal{I},\Bc)$, where $\mathcal{I}$ is the unit 2-category, where $G$ acts trivially. 

In Section \ref{Section:coherence} we prove that any 2-category with a group action is $G$-biequivalent to another one where the action is \textit{strict}. Section \ref{Section:equivariant} is devoted to explicitly describe all ingredients in the equivariant 2-category $\Bc^G$.

In Section \ref{Section:examples-from-cat} we show an example coming from graded tensor categories. If $\Do=\oplus_{g\in G} \Do_g$ is a $G$-graded tensor category, then the group $G$ acts on the 2-category ${}_{\Do_1}\Mod$ of left $\Do_1$-modules. The resulting equivariant 2-category 
$({}_{\Do_1}\Mod)^G$ is biequivalent to ${}_{\Do}\Mod$. In Section \ref{braided-Gcrossed} we define the $G$-braided center of a 2-category with an action of a group $G$.
In Section \ref{Section-center}, we show that there is a monoidal equivalence $\Zc( \Bc^G)\simeq \Zc(\Phi)^G, $ where $\Phi:\Bc^G\to \Bc$ is the forgetful pseudofunctor. When applied to the example $({}_{\ca}\Mod)^G$, we recover results from \cite{GNN}.

\subsection*{Acknowledgments} The  work of E.B and M.M. was  partially supported by CONICET, Secyt (UNC),  Argentina. M.M.
is grateful to the department of mathematics at Universidad de los Andes, Bogot\'a, where part of this work was done, for the kind hospitality.

\section{2-categories}\label{Section:2-cat}
 
Let us briefly recall the notion of a 2-category. For more details,
the reader is referred to \cite{KSt,St}.
For any 2-category $\Bc$, the set of objects, also called \textit{0-cells}, will be denoted by $\operatorname{Obj}(\Bc)$. The composition in each hom-category $\Bc(A, B)$, that is, the vertical
composition of 2-cells, is denoted by juxtaposition $fg$, while the symbol $\circ$ is used to denote the horizontal
composition functors $$\circ : \Bc(B, C) \times \Bc(A, B) \to \Bc(A, C).$$ The identity of a 0-cell $A$ is written
as $I_A : A \to A$. For any 1-cell $X$ the identity will be denoted $\id_X$ or sometimes simply as $1_X$, when space saving is needed. For any 2-category $\Bc$, we shall denote by $\Bc^{\opp}$ the 2-category that is obtained from $\Bc$ by reversing 1-cells.

  \begin{exa}\label{initial-2cat} The unit 2-category $\mathcal{I}$ has a single 0-cell, named $\star$. The monoidal category $\mathcal{I}(\star,\star)$ is the unit monoidal category. 
\end{exa}

A \textit{pseudofunctor}  $(F,\alpha):\Bc\to \Bc'$, consists of a function $F:\operatorname{Obj}(\Bc)\to \operatorname{Obj}(\Bc')$, a family of functors $F:\Bc(A,B)\to \Bc'(F(A),F(B)),$ for each $A, B\in \operatorname{Obj}(\Bc)$, a collection of isomorphisms 
$\phi_A: I_{F(A)}\to F(I_A),$
and a family of natural isomorphisms
\[\xymatrix{
\Bc(B,C)\times \Bc(A,B) \ar[d]_{F\times F} \ar[rr]^{\circ} & {\ar @{} [d] |{\Uparrow \alpha}}& \Bc(A,C) \ar[d]^{F} \\
\Bc'(F(B),F(C))\times \Bc'(F(A),F(B)) \ar[rr]^-{\circ},&& \Bc'(F(A),F(C))
}\] for 0-cells $A, B, C$, subject to the usual axioms. A pseudofunctor is called \textit{unital} if $F(I_A)= I_{F(A)}$, for any 0-cell $A$, and the isomorphisms $\phi_A$ are the identities. A pseudofunctor is called
 a \textit{2-functor} if the associativity isomorphisms $\alpha$ are the identities.

If $F$, $G$ are pseudofunctors, a \textit{pseudonatural transformation} 
\xymatrix{
\bb {\ar @{} [r] |{\downarrow \chi}} \ar@/^1pc/[r]^F \ar@/_1pc/[r]_G& \bb'
}
consists of a family of 1-cells $\chi^0_A: F(A)\to G(A)$, $A\in \operatorname{Obj}(\Bc)$ and isomorphisms

$$\xymatrix{
F(A) \ar[r]^-{F(X)} \ar[d]_{\chi^0_A} & F(B) \ar[d]^{\chi^0_B}\\
G(A)\ar[r]_{G(X)}  {\ar @{} [ru] |{\Downarrow \chi_X}} & G(B)
}$$natural in $X\in \Bc(A,B)$, subject to the usual axioms. If $\chi, \theta$ are pseudonatural transformations, a \textit{modification} from 
\xymatrix{
\bb {\ar @{} [r] |{\downarrow \chi}} \ar@/^1pc/[r]^F \ar@/_1pc/[r]_G& \bb'
} to 
\xymatrix{
\bb {\ar @{} [r] |{\downarrow \theta}} \ar@/^1pc/[r]^F \ar@/_1pc/[r]_G& \bb'
}, consists of a family of 2-cells $\omega_A:\chi^0_A\to \theta^0_A$,  such that the diagrams
$$
\xymatrix{
\chi^0_B\circ F(X) \ar[d]_{\omega_B\circ \operatorname{id}_{F(X)}} \ar[r]^{\chi_X} &  G(X)\circ \chi^0_A \ar[d]^{ \operatorname{id}_{G(X)}\circ \omega_A}\\
\theta^0_B\circ F(X) \ar[r]^{\theta_X} & G(X)\circ \theta^0_A
}
$$
commute for all $A\in \operatorname{Obj}(\Bc)$. This modification will be denoted as $\omega:\chi\Rightarrow \theta$.  Given pseudofunctors $F,G:\bb\to \bb$, we shall denote $\operatorname{Pseu-Nat}(F,G)$ the category where objects are pseudonatural transformations from $F$ to $G$ and arrows are modifications.

A 1-cell $X\in \Bc(A,B)$ is called an \textit{equivalence} if there exists a 1-cell $Y\in \Bc(B,A)$ such that $X\circ Y\cong I_B$ and $Y\circ X\cong I_A$. We will say that an invertible 1-cell $X$ is an \textit{isomorphism} if there is $X^{*}\in \Bc(B,A)$ such that  $X\circ X^{*}= I_B$ and $X^{*}\circ X= I_A$. The next result will be useful later to simplify some proofs.

\begin{prop}\label{equivalence-1cells-iso}
Every 2-category (or bicategory) is biequivalent to a 2-category where every equivalence 1-cell is an isomorphism.
\end{prop}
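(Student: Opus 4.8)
The plan is to replace each hom-category of $\Bc$ by a skeleton, so that in the resulting $2$-category isomorphic $1$-cells become \emph{equal}. Recall that a category is skeletal when any two isomorphic objects coincide. If one can produce a biequivalent $2$-category $\Bc'$ all of whose hom-categories $\Bc'(A,B)$ are skeletal, then the statement follows immediately: if $X\in\Bc'(A,B)$ is an equivalence with quasi-inverse $Y$, the invertible $2$-cells $X\circ Y\cong I_B$ and $Y\circ X\cong I_A$ live in the skeletal categories $\Bc'(B,B)$ and $\Bc'(A,A)$, and hence force $X\circ Y=I_B$ and $Y\circ X=I_A$ on the nose; thus $X$ is an isomorphism with $X^{*}=Y$.

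To build $\Bc'$ I would keep the same $0$-cells and, for each pair $A,B$, choose a skeleton $S(A,B)\subseteq\Bc(A,B)$ together with a retraction functor $r$ and natural isomorphisms $\eta_X\colon X\to rX$, where $rX\in S(A,B)$ depends only on the isomorphism class of $X$. Set $\Bc'(A,B)=S(A,B)$, define horizontal composition by $Z\bullet X:=r(Z\circ X)$ and units by $I_A:=r(I_A)$. The decisive observation is that $r$ is constant on isomorphism classes while $\circ$ is a functor, hence preserves isomorphisms; consequently $r\big(r(W\circ Z)\circ X\big)=r\big((W\circ Z)\circ X\big)$, and similarly for the other bracketing. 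Since the associativity constraint of $\Bc$ is an isomorphism, it is annihilated by $r$, so both triple composites equal $r(W\circ Z\circ X)$ and $\bullet$ is \emph{strictly} associative; the unit constraints are handled in the same way. Thus $\Bc'$ is a genuine (strict) $2$-category, and this argument works verbatim even when $\Bc$ is only a bicategory, the associator and unitors being isomorphisms that $r$ collapses.

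Finally, the identity-on-objects assignment together with the inclusions $S(A,B)\hookrightarrow\Bc(A,B)$, equipped with the comparison isomorphisms coming from the $\eta$'s, assembles into a pseudofunctor $\Bc'\to\Bc$ which is bijective on $0$-cells and a local equivalence, hence a biequivalence $\Bc'\simeq\Bc$. The step I expect to require the most care is verifying strict associativity and unitality \emph{at the level of $2$-cells}, i.e.\ that the two triple-composition functors agree on morphisms and that $I_A\bullet-$ is literally the identity functor; this is routine coherence bookkeeping that follows from the naturality of the $\eta$'s together with the fact that $r$ factors through isomorphism classes. Granting this, $\Bc'$ is a strict $2$-category with skeletal hom-categories, and the first paragraph shows that every equivalence $1$-cell in $\Bc'$ is an isomorphism.
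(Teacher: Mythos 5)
Your first paragraph is sound and agrees with the paper's starting observation: in a locally skeletal bicategory, an invertible $2$-cell $X\circ Y\cong I_B$ forces $X\circ Y=I_B$, so every equivalence is an isomorphism. The genuine gap is in the second paragraph, where you claim that transporting composition onto the skeletons via $Z\bullet X:=r(Z\circ X)$ produces a \emph{strict} $2$-category. What your argument actually proves is that the two triple composites are equal \emph{as objects} of the skeletal hom-category. That is not strictness: strictness requires the associativity and unit \emph{$2$-cells} to be identities. The associator that $\Bc'$ inherits from $\Bc$ (conjugated by the $\eta$'s) is an automorphism of the $1$-cell $r(W\circ Z\circ X)$, and skeletal categories can have plenty of non-identity automorphisms; $r$ identifies isomorphic objects, it does not ``annihilate'' isomorphisms. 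If instead you discard the inherited associator and decree the associator of $\Bc'$ to be the identity, then $\Bc'$ is a $2$-category, but the inclusion $\Bc'\to\Bc$ equipped with the $\eta$'s no longer satisfies the coherence axiom for a pseudofunctor, so the biequivalence claimed in your last paragraph does not exist. The obstruction is genuine, not bookkeeping: consider the one-object bicategory whose $1$-cells are the elements of a group $G$, whose $2$-cells are only automorphisms, given by an abelian group $A$, and whose associator is a $3$-cocycle $\omega$ with nontrivial class in $H^3(G;A)$. This bicategory is already locally skeletal, so your construction returns it unchanged, with its non-identity associator; and no identity-associator replacement is biequivalent to it, since such bicategories are classified up to biequivalence by the class of $\omega$. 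This is the classical fact that ``skeletal'' and ``strict'' cannot in general be achieved simultaneously, and it is exactly the point where your proposed proof breaks.

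The paper's proof is built to avoid this trap. It keeps the locally skeletal model $\bb'$ as a bicategory, making no attempt to strictify it by hand, and instead applies Street's bicategorical Yoneda embedding $\bb'\to\textbf{Bicat}(\bb'^{\operatorname{op}},\textbf{Cat})$, taking $\bb''$ to be the full sub-$2$-category on the representables. Strictness of $\bb''$ now comes for free from the target (it is a $2$-category because $\textbf{Cat}$ is), while the skeletal property of $\bb'$ is what transfers the desired conclusion: a pseudonatural equivalence between representables has components that are equivalences between the skeletal categories $\bb'(C,A)$ and $\bb'(C,B)$, hence isomorphisms of categories, and inverting the components yields a strict inverse. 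If you want to repair your argument, this two-step structure --- skeletalize first, then strictify via Yoneda rather than by re-defining composition on the skeletons --- is the missing idea.
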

\begin{proof}
The proof goes along the lines of \cite[Theorem 1.4]{GPS}. Since every category  is equivalent to a skeletal one. Every bicategory $\bb$ is biequivalent to a locally skeletal one $\bb'$, that is, each of its hom-category is skeletal.  Then in $\bb'$, every 1-cell equivalence is an isomorphism.   
By  Street's Yoneda lemma for bicategories \cite[p.117 ]{St2}, the Yoneda embedding $$\bb'\to \textbf{Bicat}(\bb',\textbf{Cat}): A\mapsto \bb'^{\opp}(A,-),$$ is locally an equivalence. Therefore, $\bb'$ is biequivalent to $\bb''$; the full sub-2category of $\textbf{Bicat}(\bb'^{\operatorname{op}},\textbf{Cat})$ determined by the contravariant representables. Since every equivalence in $\bb'$ is an isomorphism, every equivalence in $\bb''$ is an isomorphism and $\bb$ is biequivalent to $\bb''$.   
\end{proof}

\subsection{The tricategory of 2-categories}

Given a pair of 2-categories $\bb$ and $\bb'$, we can define the \textit{functor 2-category},  \doscat$(\bb,\bb')$, whose 0-cells are pseudofunctors  $\bb\to \bb'$, whose 1-cells are pseudonatural transformations, and whose 2-cells are modifications.
Given 2-categories $\bb, \bb'$ and  $\bb''$, we define a pseudo-functor $$\otimes: \doscat(\bb',\bb'')\times \doscat(\bb,\bb')\to \doscat(\bb,\bb''),$$ called the \textit{tensor product}.  The tensor product at the level of pseudofunctors is  the composition. The tensor product of
pseudonatural transformations is 
\begin{equation}\label{tensorp-psnat}
(\xymatrix@C=8pt{ \Bc' \ar@/^0.6pc/[rr]^{G}\ar@{}[rr]|{\downarrow \beta}
\ar@/_0.6pc/[rr]_{ G'} &  & \Bc''}\big)\big(\xymatrix@C=8pt{ \Bc
\ar@/^0.6pc/[rr]^{F}\ar@{}[rr]|{\downarrow \alpha}
\ar@/_0.6pc/[rr]_{ F'} &  & \Bc'}\big)=\big(\xymatrix@C=9pt{ \Bc
\ar@/^0.7pc/[rr]^{GF}\ar@{}[rr]|{\downarrow \beta\otimes \alpha}
\ar@/_0.7pc/[rr]_{ G'\!F'} &  & \Bc''}\big),
\end{equation}
 where 
\begin{align*}
    (\beta \otimes \alpha)_A&=\beta_{F'(A)}\circ G(\alpha_A)\\
    (\beta \otimes \alpha)_X&=(\beta_{F'(X)}\circ \operatorname{id}_{G(\alpha^0_A)})(\operatorname{id}_{\beta^0_{F'(B)}}\circ G(\alpha_X)).
\end{align*} 
 Here, the isomorphisms constraints of the pseudofunctors have been omitted as a space-saving measure. If $\beta':G\to G'$ and $\alpha':F\to F'$ are another pseudonatural transformations and
 $\omega:\beta\to \beta'$ and  $\omega':\alpha\to \alpha'$ are modifications, their tensor product is defined as $\omega \otimes \omega': \beta\ot \alpha\to \beta'\ot \alpha'$, $(\omega \otimes \omega')_A := \omega_{F'(A)}\circ G(\omega'_A)$, for any 0-cell $A$.
\smallbreak
If  $\alpha:F\to F'$ and $\beta: H\to H'$ are pseudonatural transformations between pseudofunctors  $F,F'\in \doscat(\Bc',\Bc'') ,H,H'\in \doscat(\bb,\bb')$, then there is a modification

\[\xymatrix{
&F'H \ar[rd]^{\id_{F'}\otimes \beta}&\\
FH {\ar @{} [rr] |{\Downarrow c_{\alpha,\beta}}} \ar[rd]_{ \id_H\otimes \beta} \ar[ru]^{\alpha\otimes \id_H}&& F'H'\\
&FH' \ar[ru]_{\alpha\otimes \id_{H'}}&
}\]
given by 
\begin{equation}\label{comparison-const}
(c_{\alpha,\beta})_A:=\alpha_{\beta_A}^{-1}: F'(\beta_A)\circ \alpha_{H(A)}\to \alpha_{H'(A)}\circ F(\beta_A).
\end{equation}
This modification is called the \textit{comparison constraint.}

The tensor product is associative only at the level of pseudofunctors, but not for pseudonatural transformations. There exists an associativity constraint

\[
\xymatrix{
KHG {\ar @{} [rr] |{\Downarrow a_{\alpha,\beta,\gamma}}} \ar@/^2pc/[rr]^{(\alpha \otimes \beta)\otimes \gamma} \ar@/_2pc/[rr]_{\alpha\otimes (\beta\otimes \gamma)}&& K'H'G'
}
\]
for pseudonatural transformations $\alpha:K\to K'$, $\beta:H\to H'$ and $\gamma:G\to G'$.
The modification 
$$(a_{\alpha,\beta,\gamma})_A:\alpha_{F'H'(A)}\circ G(\beta_{H'(A)})\circ GF(\gamma_A)\to\alpha_{F'H'(A)}\circ G(\beta_H'(A)\circ F(\gamma_A)) $$ 
is defined by $(a_{\alpha,\beta,\gamma})_A=\operatorname{id}_{\alpha_{F'H'(A)}}\circ G_2(\beta_{H'(A)},F(\gamma_A))$. It is easy to see that $a$ satisfies the pentagonal identity.

\subsection{Finite tensor categories}\label{subsection:tensorcat}

A (strict) monoidal category is a 2-category with one single 0-cell. A \emph{finite tensor category over} $\ku$ is a finite $\ku$-linear abelian rigid monoidal
category $\ca$  such that the tensor product functor $\otimes : \ca \times \ca
\to \ca$ is $\ku$-linear in each variable. The reader is referred to \cite{eo}.

Suppose $\ca$ and $\Do$ are strict tensor categories. A \emph{monoidal functor} $(F, \xi, \phi):\ca\to 
\Do$ is a pseudofunctor between the corresponding 2-categories. Explicitly, it consists of a functor $F:\ca\to \Do$, natural isomorphisms 
$\xi_{X,Y}:F(X)\ot F(Y)\to F(X\ot Y),$ $X, Y\in \ca$, and isomorphism $\phi:\uno\to
F(\uno)$, satisfying certain axioms. If $(F,\xi,\phi), (F',\xi',\phi')$ are monoidal functors , a {\it
 natural monoidal transformation} $\theta:(F,\xi,\phi)\to
(F',\xi',\phi')$ is a natural transformation $\theta:F\to
F'$, such that for any pair of objects $X, Y$
\begin{equation}\label{tensort1} \theta_{\uno}\phi=\phi',\quad
\theta_{X\ot Y}\xi_{X,Y}=\xi'_{X,Y} (\theta_{X}\ot \theta_Y).
\end{equation}

\subsection{The endomorphism category of a pseudofunctor}\label{relative-center}

If $\bb$ is a 2-category, the  monoidal category $$\Zc(\bb)=\doscat(\bb,\bb)(\Id_{\bb},\Id_{\bb})$$ is exactly the center of $\bb$, \textit{i.e.}, the obvious generalization of the center construction of a  monoidal category. See \cite{MS}.
\medbreak

Let $\Bc, \Bc'$ be two 2-categories and 
$(\Hc, \alpha):  \Bc\to\Bc' $ be a unital pseudofunctor.
Denote $\Zc(\Hc)=\doscat(\bb,\bb')(\Hc,\Hc)$; the category of pseudonatural transformations of the pseudofunctor $\Hc$. This is a monoidal category with tensor product described in the previous section. Explicitly, objects in    $\Zc(\Hc)$ are pairs $(V,\sigma)$, where
 $$V=\{V_A\in \Bc'(\Hc(A),\Hc(A)) \text{1-cells, for any} A\in \Bc \}, $$   
 $$\sigma=\{\sigma_X: V_B\circ \Hc_{A,B}(X)\to \Hc_{A,B}(X)\circ V_A\},$$
 where, for any $X\in \Bc(A,B)$, $\sigma_X$ is a natural isomorphism 
 2-cell such that
\begin{equation}\label{relativ-cent1} \sigma_{I_A}=\id_{V_A},
(\alpha_{X,Y}\circ\id_{V_A}) \sigma_{X\circ Y}= (\id_{\Hc(X)}\circ \sigma_Y)(\sigma_X\circ \id_{\Hc(Y)})(\id_{V_B}\circ \alpha_{X,Y}),
\end{equation}
 for any 0-cells $A,B, C\in \Bc$, and any pair of 1-cells
 $X\in \Bc(C,B)$, $Y\in \Bc(A,C)$.

If $(V,\sigma)$, $(W,\tau)$ are two objects in $\Zc(\Hc)$, a morphism $f:(V,\sigma)\to (W,\tau)$ in $\Zc(\Hc)$ is a collection of 2-cells $f_A:V_A\Rightarrow W_A$, $A\in\Bc$ such that
\begin{equation}\label{relativ-cent2} 
(\id_{\Hc(X)}\circ f_A)\sigma_X=\tau_X (f_B\circ \id_{\Hc(X)}),
\end{equation}
for any 1-cell $X\in\Bc(A,B)$. The category $\Zc(\Hc)$ has a monoidal product defined as follows. Let $(V,\sigma), (W,\tau)\in \Zc(\Hc)$ be two objects. Then $(V,\sigma)\ot (W,\tau)=(V\ot W, \sigma\ot \tau)$, where for any 0-cells 
$A, B\in \Bc$, and $X\in \Bc(A,B)$
\begin{equation}\label{tensor-prod-relativ} (V\ot W)_A=V_A\circ W_A,\quad
(\sigma\ot \tau)_X = (\sigma_X\circ \id_{W_A})(\id_{V_B}\circ \tau_X).
\end{equation}
If $(V,\sigma), (V',\sigma'), (W,\tau), (W',\tau')\in \Zc(\Hc)$ are objects, and $f:(V,\sigma)\to (V',\sigma'),$ $f':(W,\tau), (W',\tau')$ are morphisms in $\Zc(\Hc)$, then $f\ot f':(V,\sigma)\ot (V',\sigma') \to (W,\tau)\ot (W',\tau')$ is defined by
$$(f\ot f')_A=f_A\circ f'_A, $$
for any 0-cell $A$. The unit $(\uno,\iota)\in\Zc(\Hc)$ is the object
$$ \uno_A=I_A, \quad \iota_X=\id_X,$$
for any 0-cells $A, B$ and any 1-cell $X\in \Bc(A,B).$
The center  $\Zc(\Id_\Bc)$ of the identity pseudofunctor $\Id_\Bc:\Bc\to\Bc$ is denoted as $\Zc(\Bc)$, and it coincides with the definition presented in \cite{MS}.

\section{Group actions on 2-categories}\label{Section:group-2-cat}
Assume $G$ is a group and $\Bc$ is a 2-category. We shall denote by $\dG$ the 2-category that has 0-cells the elements 
of the group $G$. For any pair $g,h\in G$
$$ \dG(g,h)=\begin{cases} \text{ the unit category, if}\quad g=h\\
\emptyset\,  \text{ if }\quad g\neq h.
  \end{cases}
$$
Moreover, $\dG$ is a monoidal 2-category, see \cite{GPS}. Since $\Pse(\Bc, \Bc)$ is also a monoidal 2-category, we define
an \emph{action} of $G$ on $\Bc$ as 
a weak monoidal homomorphism $(\Fc,\chi,\omega,\iota,\kappa,\zeta): \dG \to \Pse(\Bc, \Bc)$. See for example \cite{GPS}.

\medbreak

Explicitly, an action of $G$ on a 2-category $\Bc$ consists of the following data:
\begin{itemize}
 \item A family of pseudofunctors $F_g:\Bc\to \Bc$, $g\in G$,
 \item pseudonatural equivalences $(\chi_{g,h}, \chi^0_{g,h}): F_g\circ F_h\to F_{gh}$, $g, h\in G$,
 \item a pseudonatural equivalence $\iota: \Id_\Bc\to F_1$,
 
 \item for any $g,h,f\in G$ invertible modifications 
 $$\omega_{g,h,f}: \chi_{gh,f} \circ (\chi_{g,h} \ot  \idn_{F_f }) \Rightarrow \chi_{g,hf}\circ (\idn_{F_g} \ot\chi_{h,f}), $$
 $$\kappa_g: \chi_{1,g} \circ (\iota\ot \idn_{F_g} ) \Rightarrow \id_{F_g}, \quad \zeta_g: \chi_{g,1}\circ (\idn_{F_g}\ot\iota)\Rightarrow
 \id_{F_g},$$
\end{itemize}
such that for any 0-cell $A$
\begin{align}\label{mn1}
1_{(\chi^0_{g,f})_A} \circ F_g(\kappa_f)_A (\omega_{g,1,f})_A= 1_{(\chi^0_{g,f})_A} \circ ( \zeta_g)_{F_f(A)},
\end{align}
\begin{align}\label{mn2}\begin{split} &\big( \idn_3\circ (F_g(\omega_{h,f,k})_A) \big)\big(\omega_{g,hf,k}\circ \idn_2 \big)
\big(\idn_{(\chi^0_{ghf,k})_A}\circ (\omega_{g,h,f})_{F_k(A)}\big)=\\
&=\big((\omega_{g,h,fk})_A\circ  \idn_4)\big)
\big(\idn_5\circ (\chi_{g,h})_{ \chi^0_{f,k}}\big)\big((\omega_{gh,f,k})_A\circ \idn_6\big),
\end{split}
\end{align}
for any $g,h,f,k\in G$. Where,
$$\idn_{2}=1_{F_g( \chi^0_{h,f})_{F_k(A)}},\quad  \idn_{3}=1_{(\chi^0_{g,hfk})_A}, \quad \idn_4=1_{F_g F_f( \chi^0_{h,k})_A},$$
$$\idn_{5}=1_{(\chi^0_{gh,fk})_A}, \quad  \idn_{6}= 1_{(\chi^0_{g,h})_{F_fF_k(A)}}.$$
In equation \eqref{mn2}, we are omitting the associativity isomorphisms of the pseudofunctors $F_g$. In the following diagrams we shall denote by $\li{g}$ the pseudofunctor $F_g$,  the composition of functors as juxtaposition and the tensor product of pseudonatural transformations also by juxtaposition. Diagrammatically, we have modifications
\[
\xymatrixrowsep{0.5in}
\xymatrixcolsep{0.5in}
\xymatrix{
\li{g}\ \li{h}\ \li{f} \ar[d]_{1_{\li{g}}\otimes \chi_{h,f}} \ar[rr]^{\chi_{g,h}\otimes 1_{\li{f}}}&& \li{gh}\ \li{f} \ar[d]^{\chi_{gh,f}}\\
\li{g}\ \li{hf} \ar[rr]_{\chi_{g,hf}}&& \li{ghf} {\ar @{} [llu] |{\Downarrow\omega_{g,h,f}}},
}
\]

such that the next diagrams are equal for all $g,h,f,k \in G$,  

\begin{equation}\label{omega-axiom} 
\xymatrixrowsep{0.2in}
\xymatrixcolsep{0.2in}
\xymatrix{
  &  & \li{gh}\ \li{f}\ \li{f}\ar[rrrr]^{\chi_{gh,f}\otimes 1_{\li{k}}} {\ar @{} [ddrr] |{\Downarrow\omega_{g,h,f}\otimes 1_{\li{k}}}} &  &  &  & \li{ghf}\ \li{k}\ar[rrdd]^{\chi_{ghf,k}} &  &  \\ 
  &  &  &  &  &  &  &  &  \\ 
 \li{g}\ \li{h}\ \li{f}\ \li{k}\ar[rruu]^{\chi_{g,h}\otimes 1_{\li{f}}\otimes 1_{\li{k}}}  \ar[rrdd]_{1_{\li{g}}\otimes 1_{\li{h}}\otimes \chi_{f,k}}\ar[rrrr]^{1_{\li{g}}\otimes \chi_{h,f}\otimes 1_{\li{k}}} &  &  &  & \li{g}\ \li{hf}\ \li{k}\ar[rruu]^{\chi_{g,hf}\otimes 1_{\li{k}}}\ar[rrdd]^{1_{\li{g}}\otimes \chi_{hf,k}} &  &  &  & \li{ghfk} {\ar @{} [llll] |{\Downarrow \omega_{g,hf,k}}}\\ 
  &  &  &  &  &  &  &  &  \\ 
  &  &  \li{g} \ \li{h}\ \li{fk} {\ar @{} [rrruu] |{\Downarrow 1_{\li{g}}\otimes \omega_{h,f,k}}} \ar[rrrr]_{1_{\li{g}}\otimes \chi_{h,fk}} &  &  &  & \li{g}\ \li{hfk}\ar[rruu]_{\chi_{g,hfk}} &  & 
}
\end{equation}
\begin{center} \Huge{\rotatebox[origin=c]{-90}{$=$}} 
\end{center}
\[\xymatrixrowsep{0.2in}
\xymatrixcolsep{0.2in}
\xymatrix{
  &  & \li{gh}\ \li{f}\ \li{k} \ar[ddrr]^{1_{\li{gh}}\otimes\chi_{f,k}} \ar[rrrr]^{\chi_{gh,f}\otimes 1_{\li{k}}}  &  &  &  & \li{ghf}\ \li{k}\ar[rrdd]^{\chi_{ghf,k}} {\ar @{} [lldd] |{\Downarrow \omega_{gh,f,k}}} &  &  \\ 
  &  &  &  &  &  &  &  &  \\ 
 \li{g}\ \li{h}\ \li{f}\ \li{k} \ar[rruu]^{\chi_{g,h}\otimes 1_{\li{f}}\otimes 1_{\li{k}}}   \ar[rrdd]_{1_{\li{g}}\otimes 1_{\li{h}}\otimes \chi_{f,k}}   {\ar @{} [rrrr] |{\Downarrow c_{\chi_{g,h},\chi_{f,k}}}} &  &  &  & \li{gh}\ \li{fk}  \ar[rrrr]_{\chi_{gh,fk}} &  &  &  & \li{ghfk} \\ 
  &  &  &  &  &  &  &  &  \\ 
  &  &  \li{g} \ \li{h}\ \li{fk} \ar[uurr]_{\chi_{g,h}\otimes 1_{\li{fk}}}  \ar[rrrr]_{1_{\li{g}}\otimes \chi_{h,fk}} &  &  &  & \li{g}\ \li{hfk}\ar[rruu]_{\chi_{g,hfk}} {\ar @{} [lluu] |{\Downarrow\omega_{g,h,fk}}} &  & 
}\]

We say that a group $G$ acts \emph{trivially} on $\Bc$ if the weak monoidal homomorphism 
$(\Fc,\chi,\omega,\iota,\kappa,\zeta): \dG \to \Pse(\Bc, \Bc)$ is the trivial one. This means that for any $g,h\in G$, the pseudofunctors $F_g$ are the identity, $\chi_{g,h}$ are the identity pseudonatural transformations and all the modifications are identities.

\begin{defi}\label{unital-act} An action $(\Fc,\chi,\omega,\iota,\kappa,\zeta): \dG \to \Pse(\Bc, \Bc)$ is called \emph{unital} if $F_g$ is a unital pseudofunctor,
$F_1=\Id_\Bc$, and  $\chi_{g,1}=\id_{F_g}=\chi_{1,g}$, $\kappa_g=\id=\zeta_g$ for any $g\in G$. 
A unital $G$-action will
be denoted simply by $(\Fc,\chi,\omega)$.
\end{defi}

\begin{defi}\label{strict-act} An action $(\Fc,\chi,\omega,\iota,\kappa,\zeta): \dG \to \Pse(\Bc, \Bc)$ is called \emph{strict} if each pseudofunctor $F_g$ is a 2-functor, and  $F_g\circ F_h=F_{gh}$, and the pseudonatural transformations $\chi_{g,h}$ and the modifications $\omega_{g,h,f}$ are the identities for any $g,h,f\in G$.
\end{defi}

A similar argument as in \cite[Proposition 3.1]{Ga2} applied in this case, allows us to  consider  only unital actions. Assume that $\Bc, \Bc'$ are 2-categories equipped with  unital actions of a group $G$ via 
$$(\Fc,\chi,\omega): \dG \to \Pse(\Bc, \Bc),\,\, (\widetilde \Fc,\widetilde \chi,\widetilde \omega): \dG \to \Pse(\widetilde{\Bc}, \widetilde{\Bc}).$$

\begin{defi} A \emph{$G$-pseudofunctor} 
 between $\Bc$ and $\widetilde{\Bc}$ is a 
triple $(\Hc, \gamma, \Pi)$, where 
\begin{itemize}
 \item $\Hc:\Bc\to \widetilde{\Bc}$ is a unital pseudofunctor,
 
 \item for any $g\in G$, pseudonatural equivalences $\gamma_g:\Hc\circ F_g \to \widetilde F_g\circ \Hc,$
 \item invertible modifications
 \[
\xymatrix{
&\s{f}\HH\li{g} \ar[rr]^{1_{\s{f}}\otimes \gamma_g}&& \s{f}\s{g}\HH \ar[rd]^{\s{\chi_{f,g}}\otimes 1_{\HH}}\\
\HH\li{f}\ \li{g} \ar[ru]^{\gamma_f\otimes 1_{\li{g}}} \ar[rrd]_{1_{\HH}\otimes \chi_{f,g}}&&&& \s{fg}\HH\\
&&\HH\li{fg} \ar[rru]_{\gamma_{fg}} {\ar @{} [uu] |{\Downarrow\Pi_{f,g}}}&&
}
\] 
\end{itemize}
such that such that for all $f,g,h\in G$
\begin{align}\label{G-funct1} \gamma_1=\id_\Hc, \quad \Pi_{g,1}=\idn_{\gamma_g}=\Pi_{1,g},
\end{align}
\begin{equation}\label{Pi-axiom}
\xymatrixrowsep{0.2in}
\xymatrixcolsep{0.2in}
\xymatrix{
  &  &  & \s{f}\s{g}\HH\li{h} \ar[rrd]^{1_{\s{f}}1_{\s{g}}\gamma_h } {\ar @{} [dd] |{\Downarrow1_{\s{f}}\ot\Pi_{g,h}}} &  &  &  \\ 
  & \s{f}\HH\li{g}\li{h}  \ar[rru]^{1_{\s{f}}\gamma_g1_{\li{h}}}\ar[rd]^{1_{\s{f}\HH}\chi_{g,h}} &  &  &  & \s{f}\s{g}\s{h}\HH\ar[ld]_{1_{\s{f}}\s{\chi_{g,h}}1_{\HH}}\ar[rd]^{\s{\chi_{f,g}}1_{\s{h}\HH}} &  \\ 
 \HH\li{f}\li{g}\li{h} {\ar @{} [rr] |{c_{\gamma_f,\chi_{g,h}}}} \ar[ru]^{\gamma_f 1_{\li{g}\ \li{h}}}\ar[rd]_{1_{\HH \li{f}}\chi_{g,h}} &  & \s{f}\HH \li{gh}\ar[rr]^{1_{\s{f}}\gamma_{gh}} &  & \s{f}\s{gh}\HH\ar[rd]_{\s{\chi_{f,gh} 1_{\Hc}}} &  & \s{fg}\s{h}\HH \ar[ld]^{\s{\chi_{fg,h}}1_{\HH}}\\ 
  & \HH\overline{f}\ \overline{gh} \ar[ru]_{\gamma_f 1_{\li{gh}}}\ar[rrd]_{\chi_{f,gh}} &  &  &  & \s{fgh}\HH  {\ar @{} [uu] |{\overset{\widetilde\omega_{f,g,h}}{\Leftarrow}}} &  \\ 
  &  &  & \HH\overline{fgh} \ar[rru]_{\gamma_{fgh}} {\ar @{} [uu] |{\Downarrow\Pi_{f,gh}}}&  &  & 
}
\end{equation}
\begin{center}  \Huge{\rotatebox[origin=c]{-90}{$=$}} 
\end{center}
\[
\xymatrixrowsep{0.2in}
\xymatrixcolsep{0.2in}
\xymatrix{
  &  &  & \s{f}\s{g}\HH\li{h} \ar[ddr]^{\s{\chi_{f,g}}1_{\HH\li{h}}} \ar[rrd]^{1_{\s{f}}1_{\s{g}}\gamma_h } &  &  &  \\ 
  & \s{f}\HH\li{g}\li{h} {\ar @{} [rrr] |{\Downarrow \Pi_{f,g}\ot 1_{\li{h}}}} \ar[rru]^{1_{\s{f}}\gamma_g1_{\li{h}}} &  &  &  & \s{f}\s{g}\s{h}\HH \ar[rd]^{\s{\chi_{f,g}}1_{\s{h}\HH}} {\ar @{} [dl] |{c_{\widetilde\chi_{f,g},\gamma_h}}}&  \\ 
 \HH\li{f}\li{g}\li{h} \ar[rr]^{1_{\HH}\chi_{f,g}1_{\li{h}}} \ar[ru]^{\gamma_f 1_{\li{g}\li{h}}}\ar[rd]_{1_{\HH \li{f}}\chi_{g,h}} &  & \HH \li{fg}\ \li{h} \ar[rr]^{\gamma_{fg}1_{\li{h}}} \ar[ddr]^{1_{\HH}\chi_{fg,h}} &  &  \s{fg}\HH\li{h} {\ar @{} [dd]+<6mm> |{\Downarrow\Pi_{fg,h}}} \ar[rr]^{1_{\s{fg}}\gamma_h}&  & \s{fg}\s{h}\HH \ar[ld]^{\s{\chi_{fg,h}}1_{\HH}}\\ 
  & \HH\overline{f}\ \overline{gh} {\ar @{} [ur] |{\overset{1_{\HH}\ot\omega_{f,g,h}}{\Leftarrow}}} \ar[rrd]_{1_{\HH}\chi_{f,gh}} &  &  &  & \s{fgh}\HH  &  \\ 
  &  &  & \HH\overline{fgh} \ar[rru]_{\gamma_{fgh}} &  &  & 
}
\]
holds in \doscat$(\bb,\bb)$. In the above diagrams, we are using the comparison constraints $c$ defined in \eqref{comparison-const}.
\end{defi}

\begin{defi} Assume that $(\HH,\gamma, \Pi), (\HH',\gamma', \Pi')$ are $G$-pseudofunctors.
A $G$-\textit{pseudonatural} transformation is a pair $(\theta, \{\theta_g\}_{g\in G})$, where $\theta: \HH\to \HH'$ is a pseudonatural transformation,   and  $\theta_g$ are invertible modifications
\[
\xymatrixrowsep{0.6in}
\xymatrixcolsep{0.6in}
\xymatrix{
\HH\li{g} \ar[r]^{\gamma_g } \ar[d]_{\theta\ot \unoli{g} } & \s{g}\HH \ar[d]^{\unos{g}\ot \theta} \\
\HH'\li{g}\  \ar[r]^{\gamma_{g}'} & \s{g}\HH' \ar@{}[ul]|-{\Downarrow \theta_g}
}\]

such that for all $g,f \in G$, the equation
\[
\xymatrixrowsep{0.3in}
\xymatrixcolsep{0.3in}
\xymatrix{
\HH\li{g}\ \li{f} \ar@{}[rrd]|-{\Downarrow \theta_g\unoli{f}} \ar[rr]^{\gamma_g \unoli{f}} \ar[d]_{\theta \unoli{g} \unoli{f}} &&  \s{g}\HH\li{f} \ar@{}[rrd]|-{\Downarrow \unos{g}\theta_f} \ar[d]_{\unos{g}\theta\unoli{f}} \ar[rr]^{\unos{g}\gamma_f} && \s{g}\s{f}\HH \ar@{}[rrd]|-{\Downarrow c_{\theta,\s{\chi_{g,f}}}} \ar[d]_{\unos{g}\unos{f}\theta}  \ar[rr]^{\s\chi_{g,f}1_{\HH}} &&\s{gf}\HH \ar[d]^{\unos{gf}\theta} &\\
\HH'\li{g}\ \li{f} \ar@/_/[rrrd]_{1_{\HH'}\chi_{g,f}}\ar[rr]_{\gamma_{g}'\unoli{f}} && \s{g}\HH'\li{f} \ar[rr]^{\unos{g}\gamma_f'}&& \s{g}\s{f}\HH' \ar[rr]_{\s{\chi_{g,f}}1_{\HH'}}&& \s{gf}\HH' & \\
&&&\HH'\li{gf} \ar@/_/[rrru]_{\gamma_{gf}'} \ar@{}[u]|-{\Downarrow \Pi_{g,f}'}
}\]

\begin{center}  \Huge{\rotatebox[origin=c]{-90}{$=$}} 
\end{center}

\[
\xymatrixrowsep{0.3in}
\xymatrixcolsep{0.3in}
\xymatrix{
\HH\li{g}\ \li{f} \ar[d]_{\theta\unoli{g}\unoli{f}} \ar@/_/[rrrd]_{1_{\HH}\chi_{g,f}}\ar[rr]_{\gamma_{g}\unoli{f}} && \s{g}\HH\li{f} \ar[rr]^{\unos{g}\gamma_f}&& \s{g}\s{f}\HH' \ar[rr]_{\s{\chi_{g,f}}1_{\HH}}&& \s{gf}\HH \ar[d]^{\unos{gf}\theta} \\ \HH'\li{g}\ \li{f} \ar@{}[rrr]<-3mm>|-{\Downarrow c_{\theta,\chi_{g,f}}^{-1}} \ar@/_/[rrrd]_{1_{\HH'}\chi_{g,f}}
&&&\HH\li{gf} \ar[d]_{\theta\unoli{gf}}\ar@/_/[rrru]_{\gamma_{gf}} \ar@{}[u]|-{\Downarrow \Pi_{g,f}}&&&\s{gf}\HH' \ar@{}[lll]<3mm>|-{\Downarrow \theta_{gf}} \\
&&&\HH'\li{gf} \ar@/_/[rrru]_{\gamma_{gf}'}
}\]
holds in \doscat$(\bb,\bb)$.

\end{defi}

\begin{defi} Assume that $(\theta, \{\theta_g\}_{g\in G}), (\sigma, \{\sigma_g\}_{g\in G}):(\Hc, \gamma, \Pi)\to 
(\widetilde\Hc, \widetilde\gamma, \widetilde\Pi)$ are  
$G$-pseudonatural transformations. A \emph{$G$-modification}
$\alpha: (\theta, \{\theta_g\}_{g\in G})\Rightarrow(\sigma, \{\sigma_g\}_{g\in G})$ is a 
modification $\alpha: \theta \Rightarrow \sigma$ such that

\[
\xymatrixrowsep{0.4in}
\xymatrixcolsep{0.4in}
\xymatrix{
\HH\li{g} \ar[rr]^{\gamma_g } \ar@{}[d]|-{\overset{\alpha \otimes \unoli{g}}{\Leftarrow }} \ar@/_3pc/[d]_{\sigma \otimes \unoli{g} } \ar@/^3pc/[d]^{\theta \otimes \unoli{g} }  && \s{g}\HH \ar[d]^{\unos{g}\theta_g} \\
\HH'\li{g}\  \ar[rr]_{\gamma_{g}'} && \s{g}\HH' \ar@{}[ul]|-{\Downarrow \theta_g}
}\]
\begin{center}  \Huge{\rotatebox[origin=c]{-90}{$=$}} 
\end{center}
\[
\xymatrixrowsep{0.4in}
\xymatrixcolsep{0.4in}
\xymatrix{
\HH\li{g} \ar[rr]^{\gamma_g } \ar[d]_{\sigma_g\otimes \unoli{g}} \ar@{}[rrd]<-1mm>_{\Downarrow \sigma_g}  && \s{g}\HH   \ar@{}[d]|-{\overset{\unos{g}\otimes \alpha}{\Leftarrow }} \ar@/_3pc/[d]_{ \unos{g}\otimes \sigma } \ar@/^3pc/[d]^{\unos{g}\otimes \theta } \\
\HH'\li{g}\  \ar[rr]_{\gamma_{g}'} && \s{g}\HH' 
}\]
\end{defi}

Assume that $(\Hc^1, \gamma^1, \Pi^1), (\Hc^2, \gamma^2, \Pi^2), (\Hc^3, \gamma^3, \Pi^3)$ are $G$-pseudofunctors, and $(\theta, \{\theta_g\}_{g\in G}): (\Hc^1, \gamma^1, \Pi^1)\to (\Hc^2, \gamma^2, \Pi^2)$,
$(\sigma, \{\sigma_g\}_{g\in G}): (\Hc^2, \gamma^2, \Pi^2)\to (\Hc^3, \gamma^3, \Pi^3)$ are
$G$-pseudonatural transformations. The composition
$$ (\sigma, \{\sigma_g\}_{g\in G}) \circ  (\theta, \{\theta_g\}_{g\in G})=(\rho, \{\rho_g\}_{g\in G})$$
is defined as follows. The pseudonatural transformation 
$\rho=\sigma\circ \theta$. For any 0-cell $A\in \Bc$ and  any $g\in G$
$$(\rho_g)_A= \big((\sigma_g)_A\circ \id_{  \theta^0_{F_g(A)}} \big)
\big( \id_{\widetilde F_g(\sigma^0_A)}\circ (\theta_g)_A)\big). $$
Here, we are also ommiting the associativity constraints of the pseudofunctor $F_g$. The composition of modifications of $G$-categories is the usual composition of modi\-fications.

\begin{defi} $\Pse^G(\Bc, \widetilde\Bc)$ is the 2-category in which 0-cells are pseudofunctors of $G$-categories, 
1-cells are pseudonatural transformations of $G$-categories and 2-cells are modifications of $G$-categories.\end{defi}

 The next result is a consequence of \cite[Corollary 8.3]{GPS}.
\begin{prop}   $\Pse^G(\Bc, \widetilde\Bc)$ is a 2-category. \qed
\end{prop}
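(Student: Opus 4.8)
The plan is to recognize $\Pse^G(\Bc,\widetilde\Bc)$ as one hom-bicategory of a functor tricategory and then to quote \cite[Corollary 8.3]{GPS}. First I would suspend the group: write $\Sigma\dG$ for the one-object tricategory whose single endomorphism monoidal $2$-category is $\dG$, with monoidal structure supplied by the multiplication of $G$. The point is that the tricategory $\doscat$ of $2$-categories has hom-bicategories $\doscat(\Bc,\widetilde\Bc)=\Pse(\Bc,\widetilde\Bc)$, so that a trihomomorphism $\Sigma\dG\to\doscat$ is precisely the choice of a $2$-category $\Bc$ (the image of the unique $0$-cell) together with a weak monoidal homomorphism $\dG\to\doscat(\Bc,\Bc)=\Pse(\Bc,\Bc)$. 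By the very definition of an action recalled above, such trihomomorphisms are exactly $2$-categories equipped with a ($G$-)action.

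With this dictionary in place, I would unwind the Definitions of $G$-pseudofunctor, $G$-pseudonatural transformation and $G$-modification and match them, diagram by diagram, with the GPS notions of tritransformation, trimodification and perturbation between the two trihomomorphisms $\Sigma\dG\to\doscat$ determined by the given actions on $\Bc$ and on $\widetilde\Bc$. Concretely, the modifications $\Pi_{f,g}$ and the axioms \eqref{G-funct1}--\eqref{Pi-axiom} are the data and coherence of a tritransformation, the hexagonal compatibility defining a $G$-pseudonatural transformation is the trimodification axiom, and the $G$-modification equation is the perturbation condition; the comparison constraints $c$ from \eqref{comparison-const} appearing throughout are precisely the pieces of GPS data entering those axioms. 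Granting this identification, \cite[Corollary 8.3]{GPS} assembles trihomomorphisms, tritransformations, trimodifications and perturbations into a functor tricategory $\textbf{Tricat}(\Sigma\dG,\doscat)$, and $\Pse^G(\Bc,\widetilde\Bc)$ is by construction the hom-bicategory spanned by the objects corresponding to the two actions. This already equips it with horizontal and vertical composition, associativity and unit constraints, and all the bicategory coherence axioms, since these are part of the tricategory structure produced in \cite{GPS}.

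It then remains to promote this hom-bicategory to a strict $2$-category, i.e.\ to see that its associativity and unit constraints are identities. Here I would use that $\doscat$ is semistrict (a Gray-category): each hom $\doscat(\Bc,\widetilde\Bc)$ of pseudofunctors, pseudonatural transformations and modifications is an honest $2$-category, in which vertical composition of pseudonatural transformations is strictly associative and strictly unital. Consequently the composition of $G$-pseudonatural transformations, given explicitly by $\rho=\sigma\circ\theta$ together with $(\rho_g)_A=\big((\sigma_g)_A\circ \id_{\theta^0_{F_g(A)}}\big)\big(\id_{\widetilde F_g(\sigma^0_A)}\circ (\theta_g)_A\big)$, inherits strict associativity and unitality, while vertical composition of $G$-modifications is the ordinary (strict) vertical composition of modifications. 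Thus every constraint collapses to an identity and $\Pse^G(\Bc,\widetilde\Bc)$ is a strict $2$-category, as claimed.

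The main obstacle is the bookkeeping in the middle step: verifying that the large pasting diagram \eqref{Pi-axiom}, the hexagon in the definition of a $G$-pseudonatural transformation, and the $G$-modification equation coincide, term for term, with the GPS axioms for source $\Sigma\dG$, and in particular that the normalizations built into a \emph{unital} action ($F_1=\Id_\Bc$ and $\chi_{g,1}=\chi_{1,g}=\id$, together with \eqref{G-funct1}) are exactly what forces the functor-tricategory unit and associativity constraints to be identities. This is the reason one lands in a $2$-category rather than merely a bicategory, and it is where the reduction to unital actions recorded earlier is essential.
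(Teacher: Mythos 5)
Your proposal takes exactly the route of the paper, whose entire proof of this proposition is the one-line citation of \cite[Corollary 8.3]{GPS}: your dictionary identifying $G$-actions with trihomomorphisms $\Sigma\dG\to\doscat$, matching $G$-pseudofunctors, $G$-pseudonatural transformations and $G$-modifications with tritransformations, trimodifications and perturbations, and extracting strictness of the hom from the Gray-structure of $\doscat$ together with the unitality normalizations is precisely the unpacking of that citation. The argument is correct, and in fact supplies more detail than the paper itself does.
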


\begin{defi} We say that the 2-categories $\Bc$ and $ \widetilde\Bc$ are $G$-\textit{biequivalent} if there exists a $G$-pseudofunctor $\Hc:\Bc\to \widetilde\Bc$ that is also a biequivalence.
\end{defi}

\begin{lema}[Transport of structure]\label{transport}
Let $\Bc$ be a 2-category with an action of $G$ given by  $(\Fc,\chi,\omega)$. Let  $\HH:\Bc\to \Bc'$ be a biequivalence, \[L_g:\Bc'\to \Bc', \  \  \gamma_g:\HH \circ F_g\to L_g\circ \HH\]   a $G$-indexed family of pseudofunctors and pseudonatural equivalences, respectively. Then, there is a way to endowed $\Bc'$ with a $G$-action   $(L,\chi',\omega')$ such that $(\HH,\gamma, \Pi ):\Bc\to \Bc'$ is a $G$-biequivalence .
\end{lema}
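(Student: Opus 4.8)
The plan is to \emph{transport} the $G$-action from $\Bc$ to $\Bc'$ along the biequivalence $\HH$. The one tool I need is that precomposition with a biequivalence is again a biequivalence of functor $2$-categories: since $\HH:\Bc\to\Bc'$ is a biequivalence, for any $2$-category $\Cc$ the assignment $(-)\circ\HH:\doscat(\Bc',\Cc)\to\doscat(\Bc,\Cc)$ is a biequivalence (cf. \cite{GPS}). Taking $\Cc=\Bc'$, whiskering with $\HH$, that is $(-)\ot\idn_\HH$, carries each hom-category $\doscat(\Bc',\Bc')(P,Q)$ equivalently onto $\doscat(\Bc,\Bc')(P\HH,Q\HH)$; in particular it is essentially surjective on objects — every pseudonatural transformation $P\HH\to Q\HH$ is isomorphic, by an invertible modification, to some $\eta\ot\idn_\HH$ — and fully faithful on modifications, and it reflects equivalences. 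Using Proposition \ref{equivalence-1cells-iso} I may assume all equivalence $1$-cells are isomorphisms, so each $\gamma_g$ has an inverse $\gamma_g^{-1}$; and since $F_1=\Id_\Bc$ I normalize $L_1=\Id_{\Bc'}$, $\gamma_1=\idn_\HH$.

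For each $g,h\in G$ I form the pseudonatural equivalence
\[
\Theta_{g,h}=\gamma_{gh}\,(\idn_\HH\ot\chi_{g,h})\,(\gamma_g^{-1}\ot\idn_{\li{h}})\,(\idn_{L_g}\ot\gamma_h^{-1}):\ L_gL_h\HH\to L_{gh}\HH,
\]
obtained by conjugating $\chi_{g,h}$ with the $\gamma$'s. By essential surjectivity of $(-)\ot\idn_\HH$ there are a pseudonatural equivalence $\chi'_{g,h}:L_gL_h\to L_{gh}$ (an equivalence because $\Theta_{g,h}$ is one and whiskering reflects equivalences) and an invertible modification $\chi'_{g,h}\ot\idn_\HH\cong\Theta_{g,h}$. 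Composing this on the right with $(\idn_{L_g}\ot\gamma_h)(\gamma_g\ot\idn_{\li{h}})$ and unwinding $\Theta_{g,h}$ yields exactly the datum $\Pi_{g,h}$ required of a $G$-pseudofunctor, relating $(\chi'_{g,h}\ot\idn_\HH)(\idn_{L_g}\ot\gamma_h)(\gamma_g\ot\idn_{\li{h}})$ to $\gamma_{gh}(\idn_\HH\ot\chi_{g,h})$. The normalization forces $\chi'_{g,1}=\idn_{L_g}=\chi'_{1,g}$, so $(L,\chi',-)$ is unital.

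The modifications $\omega'_{g,h,f}:\chi'_{gh,f}(\chi'_{g,h}\ot\idn_{L_f})\Rightarrow\chi'_{g,hf}(\idn_{L_g}\ot\chi'_{h,f})$ are produced by descent. Whiskering both composites with $\HH$ and inserting the modifications $\Pi$ rewrites them as the two conjugations, by the $\gamma$'s, of $\chi_{gh,f}(\chi_{g,h}\ot\idn_{\li{f}})$ and $\chi_{g,hf}(\idn_{\li{g}}\ot\chi_{h,f})$; these differ exactly by the image of $\omega_{g,h,f}$ under that conjugation, up to the comparison constraints $c$ of \eqref{comparison-const}. Since $(-)\ot\idn_\HH$ is full, this whiskered modification is of the form $\omega'_{g,h,f}\ot\idn_\HH$, and faithfulness makes $\omega'_{g,h,f}$ unique.

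It remains to verify the axioms, and here the decisive point is again that $(-)\ot\idn_\HH$ is faithful on modifications: it suffices to check each required equality \emph{after} whiskering with $\HH$. After whiskering, every side rewrites — through the $\Pi$'s and the naturality and interchange of the comparison constraints $c$ — into an expression built solely from the $\gamma$'s and the data $(\chi,\omega)$ on $\Bc$. Thus the pentagon \eqref{omega-axiom} for $\omega'$ collapses to the pentagon \eqref{omega-axiom} for $\omega$ on $\Bc$, and the $G$-pseudofunctor axiom \eqref{Pi-axiom} for $(\HH,\gamma,\Pi)$ collapses to the same pentagon together with naturality of $c$; the unital conditions \eqref{G-funct1} hold by construction. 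This exhibits $(L,\chi',\omega')$ as a $G$-action and $(\HH,\gamma,\Pi)$ as a $G$-pseudofunctor, which is a biequivalence by hypothesis, hence a $G$-biequivalence. I expect the main obstacle to be the sheer size of these pasting diagrams; the faithfulness of whiskering with $\HH$ is what makes them tractable, since it reduces each new coherence identity to the one already known on $\Bc$.
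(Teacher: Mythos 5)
Your proof is correct and takes essentially the same route as the paper: the paper likewise produces $\chi'_{g,h}$ and $\Pi_{g,h}$ simultaneously from the pseudonatural equivalences $\gamma$ and $\chi$, lets axiom \eqref{Pi-axiom} determine the $\omega'_{g,h,f}$ uniquely, and deduces axiom \eqref{omega-axiom} for $\omega'$ from the corresponding axiom of the action on $\Bc$. Your explicit appeal to the fact that whiskering with the biequivalence $\HH$ is locally an equivalence (essentially surjective and fully faithful on modifications) is precisely the mechanism that the paper's much terser proof leaves implicit.
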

\begin{proof} Since $\gamma_g$ and $\chi_{f,g}$ are psedonatural equivalences, we can simultaneously provide the datum $\Pi_{f,g}$ and the pseudonatural equivalences
$\chi'_{f,g}:L_f\circ L_g\to L_{fg}$, $f, g\in G$. Now, axiom \ref{Pi-axiom} uniquely determines the modifications $\omega_{f,g,h}'$. Axiom \ref{omega-axiom} follows from the corresponding axioms of $G$-action via $(\Fc,\chi,\omega)$. The pseudofunctor $(\HH,\gamma, \Pi ):\Bc\to \Bc'$ is a $G$-biequivalence by construction.
\end{proof}

\begin{cor}\label{action with 2-functors}
Every 2-category with a $G$-action is $G$-biequivalent to a 2-category where $G$ acts by 2-functors, that is, all $F_g$ are 2-functors.
\end{cor}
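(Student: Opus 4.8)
The plan is to reduce the statement to Lemma \ref{transport} (Transport of structure): once we exhibit a single biequivalence $\HH\colon\Bc\to\Bc'$ together with a family of \emph{2-functors} $L_g\colon\Bc'\to\Bc'$ and pseudonatural equivalences $\gamma_g\colon\HH\circ F_g\to L_g\circ\HH$, that lemma automatically produces a $G$-action $(L,\chi',\omega')$ on $\Bc'$ and turns $(\HH,\gamma,\Pi)$ into a $G$-biequivalence; since the $L_g$ are 2-functors, this is exactly the desired conclusion. Thus the whole problem is to \emph{strictify the pseudofunctors} $F_g$ simultaneously, through one fixed biequivalence.

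First I would invoke the strictification of bicategories in its functorial form (coherence for bicategories and pseudofunctors, \cite{GPS}; compare the use of Street's Yoneda embedding in Proposition \ref{equivalence-1cells-iso}). Let $\Bc' = \operatorname{st}(\Bc)$ be a strict 2-category biequivalent to $\Bc$, and let $\HH\colon\Bc\to\Bc'$ be the strictification biequivalence, i.e.\ the unit of the left biadjoint to the inclusion $\mathbf{2Cat}\hookrightarrow\textbf{Bicat}$. The feature we exploit is its universal property: any pseudofunctor $\Bc\to\mathcal{K}$ with $\mathcal{K}$ a 2-category corresponds to a genuine 2-functor $\Bc'\to\mathcal{K}$. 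Applying this to the pseudofunctor $\HH\circ F_g\colon\Bc\to\Bc'$ yields, for each $g\in G$, a 2-functor $L_g\colon\Bc'\to\Bc'$ together with the pseudonatural equivalence $\gamma_g\colon\HH\circ F_g\to L_g\circ\HH$ coming from the triangle identity. Crucially the \emph{same} biequivalence $\HH$ serves all $g$ at once.

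With $(\HH,\{L_g\},\{\gamma_g\})$ in hand, I would then apply Lemma \ref{transport} verbatim. Since the $\gamma_g$ and the $\chi_{f,g}$ are pseudonatural equivalences, the lemma simultaneously produces the data $\Pi_{f,g}$ and pseudonatural equivalences $\chi'_{f,g}\colon L_f\circ L_g\to L_{fg}$; axiom \eqref{Pi-axiom} then determines the modifications $\omega'_{f,g,h}$ uniquely, and axiom \eqref{omega-axiom} for $(L,\chi',\omega')$ follows from the corresponding axiom for $(\Fc,\chi,\omega)$. The outcome is a $G$-action on $\Bc'$ in which every $L_g$ is a 2-functor, and $(\HH,\gamma,\Pi)\colon\Bc\to\Bc'$ is a $G$-biequivalence, as required.

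The main obstacle is the middle step: one must ensure that a single biequivalence $\HH$ can rectify \emph{all} the $F_g$ to strict 2-functors at once, and that the rectification really lands in 2-functors rather than merely unital pseudofunctors. This is precisely the content of the coherence theorem for pseudofunctors, and it is where the pseudonaturality of the strictification unit is indispensable; note that the naive guess $L_g := \HH\circ F_g\circ\HH^{-1}$ only produces a pseudofunctor, so strictifying the underlying 2-category alone does not suffice. Everything after this step — the coherence data $\chi',\omega',\Pi$ and the verification of the action axioms — is handled uniformly by Lemma \ref{transport} and requires no further computation.
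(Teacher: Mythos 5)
Your proposal is correct and follows essentially the same route as the paper: strictify $\Bc$ once via Gurski's coherence for pseudofunctors (\cite[Section 2.3]{tricat-nick}), rectify all the $F_g$ simultaneously through that single biequivalence to obtain 2-functors $L_g$ with pseudonatural equivalences $\gamma_g$, and then conclude by Lemma \ref{transport}. Your use of the left-adjoint universal property of $\operatorname{st}$ applied to $\HH\circ F_g$ is just a slightly more explicit way of producing the $L_g$ than the paper's appeal to the fact that every endo-pseudofunctor of $\operatorname{st}(\Bc)$ is pseudonaturally equivalent to a 2-functor.
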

\begin{proof}
By the coherence of theorem for pseudofunctor, see \cite[Section 2.3]{tricat-nick}, every bicategory $\Bc$ is biequivalent to a 2-category $\operatorname{st}(\Bc)$ such that every pseudo-functor $F:\operatorname{st}(\Bc)\to \operatorname{st}(\Bc)$ is pseudo-natural equivalent to a 2-functor. Then applying Lemma \ref{transport} we can transport the action of $\Bc$ to a $G$-biequivalent action on $\operatorname{st}(\Bc)$   where $G$ acts by 2-functors.
\end{proof}

 \section{Coherence for group actions on 2-categories}\label{Section:coherence}
 
 The main result of this section is to prove the following coherence theorem for a group action on a 2-category.
 
 \begin{teo}[Coherence for group actions on 2-categories]\label{coherence-groupact} Let $G$ be a  group. Every 2-category with an action of $G$ is $G$-biequivalent to a 2-category with a strict action of $G$.\qed
\end{teo}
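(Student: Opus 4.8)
The plan is to prove the theorem in two stages, reducing the general weak action to a strict one by successively eliminating the coherence data. First I would invoke the results already assembled in this section to pass from an arbitrary $G$-action to a unital one: the remark following Definition \ref{strict-act} tells us it suffices to treat unital actions $(\Fc,\chi,\omega)$, so I may assume $F_1=\Id_\Bc$, $\chi_{g,1}=\chi_{1,g}=\id$, and $\kappa_g=\zeta_g=\id$. Next, Corollary \ref{action with 2-functors} allows me to replace $\Bc$ by a $G$-biequivalent 2-category on which every $F_g$ is a genuine $2$-functor. Combining this with Proposition \ref{equivalence-1cells-iso}, I may furthermore assume that every equivalence $1$-cell in the target is an isomorphism, so that in particular the pseudonatural equivalences $\chi_{g,h}$ have strict inverses. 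Thus the starting point for the real work is a unital action by $2$-functors in which the $\chi_{g,h}$ are invertible (not merely equivalences) and the $\omega_{g,h,f}$ are invertible modifications.

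The heart of the argument is then to strictify the composition $F_g\circ F_h = F_{gh}$ and to kill the $\chi_{g,h}$ and $\omega_{g,h,f}$. I would do this by a Transport-of-structure argument, applying Lemma \ref{transport} to a well-chosen biequivalence $\HH:\Bc\to\Bc'$. The natural candidate is the standard rigidification used to strictify monoidal or pseudofunctorial data: one constructs $\Bc'$ so that its $0$-cells are the same but its $1$- and $2$-cells record, in addition, the action data, effectively taking the objects of $\Bc'$ to be pairs consisting of an object of $\Bc$ together with a choice of the coherence isomorphisms, and letting the group act by strict permutation of these choices. Concretely, because $\dG\to\Pse(\Bc,\Bc)$ is a weak monoidal homomorphism into a monoidal $2$-category, the claim is precisely an instance of the coherence theorem for tricategories (Gordon--Power--Street), which states that every weak monoidal homomorphism is equivalent to a strict one after passing to a biequivalent target. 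So I would set up $L_g:\Bc'\to\Bc'$ to be the $2$-functors obtained by transporting $F_g$ along $\HH$, arrange by construction that $L_g\circ L_h=L_{gh}$ on the nose, and invoke Lemma \ref{transport} to produce the pseudonatural equivalences $\gamma_g:\HH\circ F_g\to L_g\circ\HH$ and the modifications $\Pi_{f,g}$ making $(\HH,\gamma,\Pi)$ a $G$-biequivalence.

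The decisive step is showing that on $\Bc'$ the transported structure maps $\chi'_{g,h}$ and $\omega'_{g,h,f}$ can be taken to be identities. Here the strict equality $L_g\circ L_h=L_{gh}$ forces $\chi'_{g,h}$ to be an endo-equivalence of $L_{gh}$, and because $\Bc'$ is locally skeletal (Proposition \ref{equivalence-1cells-iso}) this must be the identity on $1$-cells; the coherence axioms \eqref{omega-axiom} and \eqref{mn1}--\eqref{mn2}, now written for identity $\chi'$, collapse the $\omega'_{g,h,f}$ to identity $2$-cells as well. In other words, once composition is strict and equivalences are isomorphisms, the pentagon-type axiom \eqref{omega-axiom} for $\omega'$ reduces to the statement that $\omega'$ is an idempotent invertible modification, hence the identity. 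I expect the main obstacle to be the bookkeeping in this final collapse: one must verify that the transported $\omega'_{f,g,h}$ obtained from Lemma \ref{transport} (via axiom \ref{Pi-axiom}) genuinely satisfy the strict action axioms of Definition \ref{strict-act}, and in particular that no residual associator of the ambient tricategory $\Pse(\Bc',\Bc')$ survives. This is exactly the content of the Gordon--Power--Street coherence result for monoidal homomorphisms of monoidal bicategories, and I would cite \cite{GPS} to close the argument rather than reprove the coherence theorem from scratch.
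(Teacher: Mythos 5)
Your preliminary reductions (passing to a unital action and, via Corollary \ref{action with 2-functors}, to an action by 2-functors) agree with the paper, but the heart of your argument has a genuine gap, in fact three related ones. First, the 2-category $\Bc'$ carrying the strict action is never constructed: ``transporting $F_g$ along $\HH$'' cannot produce 2-functors with $L_g\circ L_h=L_{gh}$ on the nose, because a pseudo-inverse of a biequivalence only satisfies $\HH^{-1}\circ\HH\simeq \Id$; and Lemma \ref{transport} does not allow you to \emph{choose} $\chi'$ and $\omega'$ to be identities --- it produces them from $\gamma$ and the old data, and even when $L_g\circ L_h=L_{gh}$ holds strictly the resulting $\chi'_{g,h}$ is some pseudonatural auto-equivalence of $L_{gh}$, not necessarily the identity. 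Second, the appeal to \cite{GPS} is to a non-theorem: coherence for tricategories says every tricategory is triequivalent to a Gray-category; it does not say that every weak monoidal homomorphism $\dG\to\Pse(\Bc,\Bc)$ can be replaced by a strict one. Such a statement cannot be formal here, because replacing $\Bc$ by a biequivalent $\Bc'$ also changes the target $\Pse(\Bc,\Bc)$; producing the new 2-category and the strict homomorphism \emph{compatibly} is exactly the content of the theorem being proved (already one dimension down, for $G$-actions on categories, this requires the explicit construction of \cite{Ga2} rather than a citation of bicategorical coherence). Third, the ``decisive step'' is incorrect: local skeletality (Proposition \ref{equivalence-1cells-iso}) says that \emph{isomorphic} 1-cells are equal, not that isomorphism 1-cells equal identities, so a component $(\chi'_{g,h})^0_A:L_{gh}(A)\to L_{gh}(A)$ may perfectly well be a nontrivial automorphism; and when $\chi'$ is the identity, axiom \eqref{omega-axiom} becomes a 3-cocycle-type condition on $\omega'$, which nontrivial invertible modifications can satisfy --- it does not force $\omega'=\id$. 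If this collapse argument were valid, one could strictify essentially without enlarging $\Bc$, which is false for cohomological reasons.

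What is missing is precisely the construction that the paper carries out. One builds a new 2-category $\Bc[G]$ whose 0-cells are $G$-indexed families $(A,\theta,\alpha)$, with $A=\{A_g\}_{g\in G}$, equivalences $\theta_{g,h}:F_g(A_h)\to A_{gh}$, and coherence 2-cells $\alpha_{g,h,f}$ satisfying \eqref{obj-axiom}, with 1-cells and 2-cells similarly indexed and coherent; the group then acts on $\Bc[G]$ \emph{strictly} by translation of indices, $L_g(A)_x=A_{xg}$, which composes on the nose; and the pseudofunctor $\Hc:\Bc\to\Bc[G]$, $A\mapsto(\{F_g(A)\},(\chi^0_{g,h})_A,(\omega_{g,h,f})_A)$, is proved to be a biequivalence and is endowed with a $G$-pseudofunctor structure whose data $\gamma_g$ and $\Pi_{f,g}$ are built from $\chi$ and $\omega$ themselves. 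Your proposal gestures at this idea (``objects record the coherence isomorphisms, the group permutes the choices'') but defers all of the actual work to coherence theorems that do not prove it.
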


Assume $\Bc$ is a 2-category equipped with a unital action of $G$, $(\Fc,\chi,\omega): \dG \to \Pse(\Bc, \Bc)$. By Corollary \ref{action with 2-functors} we can assume that  $F_g$ is a 2-functor for any $g\in G$. We shall first construct  a 2-category $\bb[G]$ with a strict action of $G$.

Objects of $\Bc[G]$ are triples $(A,\theta,\alpha)$, where  $A=\{A_g\}_{g}$ is a $G$-indexed family of objects, $\theta=\{\theta_{g,h}:F_g(A_h)\to A_{gh}\}_{g,h\in G}$ is a $G\times G$-indexed family of 1-cell equivalences  and 
\[
\xymatrixrowsep{0.4in}
\xymatrixcolsep{0.4in}
\xymatrix{
F_{g}F_{h}(A_{f}) \ar[d]_{F_g(\theta_{h,f})} \ar[rr]^{(\chi_{g,h}^0)_{A_f}}&& F_{gh}(A_{f}) \ar[d]^{\theta_{gh,f}}\\
F_g(A_{hf}) \ar[rr]_{\theta_{g,hf}}&& A_{ghf} {\ar @{} [llu] |{\Downarrow\alpha_{g,h,f}}},
}
\] a $G\times G\times G$-index family of isomorphism 2-cells, such 
\[\theta_{1,g}=I_{A_g}, \quad \alpha_{1,h,f}=\operatorname{id}, \quad \alpha_{g,1,f}=\operatorname{id} \]
that for all $g,h,f,k$, and  equation 
\begin{equation}\label{obj-axiom} 
\xymatrixrowsep{0.2in}
\xymatrixcolsep{0.2in}
\xymatrix{
  &  & \li{gh}\ \li{f}\ A_k\ar[rrrr]^{\chi_{gh,f}^0} {\ar @{} [ddrr] |{\Downarrow\omega_{g,h,f}}} &  &  &  & \li{ghf}\ A_k\ar[rrdd]^{\theta_{ghf,k}} &  &  \\ 
  &  &  &  &  &  &  &  &  \\ 
 \li{g}\ \li{h}\ \li{f}\ A_{k}\ar[rruu]^{\chi_{g,h}^0\otimes 1_{\li{f}}}  \ar[rrdd]_{1_{\li{g}}\otimes 1_{\li{h}}\otimes \theta_{f,k}}\ar[rrrr]^{1_{\li{g}}\otimes \chi_{h,f}^0} &  &  &  & \li{g}\ \li{hf}\ A_k\ar[rruu]^{\chi_{g,hf}^0}\ar[rrdd]^{1_{\li{g}}\otimes \theta_{hf,k}} &  &  &  & A_{ghfk} {\ar @{} [llll] |{\Downarrow \alpha_{g,hf,k}}}\\ 
  &  &  &  &  &  &  &  &  \\ 
  &  &  \li{g} \ \li{h}\ A_{fk} {\ar @{} [rrruu] |{\Downarrow 1_{\li{g}}\otimes \alpha_{h,f,k}}} \ar[rrrr]_{1_{\li{g}}\otimes \theta_{h,fk}} &  &  &  & \li{g}\ A_{hfk}\ar[rruu]_{\theta_{g,hfk}} &  & 
}
\end{equation}
\begin{center} \Huge{\rotatebox[origin=c]{-90}{$=$}} 
\end{center}
\[\xymatrixrowsep{0.2in}
\xymatrixcolsep{0.2in}
\xymatrix{
  &  & \li{gh}\ \li{f}\ A_k \ar[ddrr]^{1_{\li{gh}}\otimes\theta_{f,k}} \ar[rrrr]^{\chi_{gh,f}^0}  &  &  &  & \li{ghf}\ A_k\ar[rrdd]^{\theta_{ghf,k}} {\ar @{} [lldd] |{\Downarrow \alpha_{gh,f,k}}} &  &  \\ 
  &  &  &  &  &  &  &  &  \\ 
 \li{g}\ \li{h}\ \li{f}\ A_k \ar[rruu]^{\chi_{g,h}^0\otimes 1_{\li{f}}}   \ar[rrdd]_{1_{\li{g}}\otimes 1_{\li{h}}\otimes \theta_{f,k}}   {\ar @{} [rrrr] |{\Downarrow (\chi_{g,h})_{\theta_{f,k}}}} &  &  &  & \li{gh}\ A_{fk}  \ar[rrrr]_{\theta_{gh,fk}} &  &  &  & A_{ghfk} \\ 
  &  &  &  &  &  &  &  &  \\ 
  &  &  \li{g} \ \li{h}\ A_{fk} \ar[uurr]_{\chi_{g,h}^0}  \ar[rrrr]_{1_{\li{g}}\otimes \theta_{h,fk}} &  &  &  & \li{g}\ A_{hfk}\ar[rruu]_{\theta_{g,hfk}} {\ar @{} [lluu] |{\Downarrow\alpha_{g,h,fk}}} &  & 
}\] holds in $\bb(F_g (F_h(F_f(A_k)), A_{ghfk})$. If $(A,\theta, \alpha)$ is a 0-cell, the identity 1-cell $I_{(A,\theta, \alpha)}$ is defined as follows. $I_{(A,\theta, \alpha)}=(I_{A_g}, l)$, where 
$l_{g,h}=\id_{\theta_{g,h}}$, for any $g,h\in G$.

If $(A,\theta, \alpha)$ and $(B,\rho ,\beta)$ are objects in $\Bc[G]$, a 1-cell   is a  pair $(X,l )$, where  $X=\{X_g:A_g\to B_g\}$ is a $G$-indexed family of 1-cells and 
\[
\xymatrixrowsep{0.4in}
\xymatrixcolsep{0.4in}
\xymatrix{
F_{g}(A_{h}) \ar[d]_{\theta_{g,h}} \ar[rr]^{F_g(X_h)}&& F_g(B_h) \ar[d]^{\rho_{g,h}}\\
A_{gh} \ar[rr]_{X_{gh}}&& B_{gh} {\ar @{} [llu] |{\Downarrow l_{g,h}}},
}
\] is a $G\times G$-indexed family of isomorphism 2-cells, such that for all $f,g,h\in G$, $l_{1,g}=\operatorname{id}_{X_g}$ and  equation
\begin{equation}\label{axiom-1-cell B[G]}
\xymatrixrowsep{0.4in}
\xymatrixcolsep{0.4in}
\xymatrix{
\li{f}\ \li{g}(A_{h}) \ar[d]_{\li{f}(\theta_{g,h})} \ar[rr]^{\li{f}\ \li{g} (X_h)}&& \li{f}\ \li{g} (B_h) \ar[d]^{\li{f}\rho_{g,h})} \ar[rr]^{\chi_{f,g}^0} && \li{fg}(B_h) \ar[d]^{\rho_{fg,h}}\\
\li{f}(A_{gh})  \ar[rrd]_{\theta_{f,gh}} \ar[rr]_{\li{f}(X_{gh})}&& \li{f}(B_{gh}) {\ar @{} [d] |{\Downarrow l_{f,gh}}} {\ar @{} [llu] |{\Downarrow \li{f}(l_{g,h})}} \ar[rr]_{\rho_{f,gh}} && {\ar @{} [llu] |{\Downarrow \beta_{f,g,h}}} B_{fgh}\\
&& A_{fgh} \ar[rru]_{X_{fgh}} &&
}
\end{equation}
\begin{center}  \Huge{\rotatebox[origin=c]{-90}{$=$}} 
\end{center}
\[
\xymatrixrowsep{0.4in}
\xymatrixcolsep{0.4in}
\xymatrix{
\li{f}\ \li{g}(A_{h}) \ar[d]_{\li{f}(\theta_{g,h})} \ar[rrd]_{\chi^0_{f,g}} \ar[rr]^{\li{f}\ \li{g} (X_h)}&& \li{f}\ \li{g} (B_h) {\ar @{} [d] |{\Downarrow (\chi_{f,g})_{X_h}}} \ar[rr]^{\chi_{f,g}^0} && \li{fg}(B_h) \ar[d]^{\rho_{fg,h}}\\
\li{f}(A_{gh})  \ar[rrd]_{\theta_{f,gh}} {\ar @{} [rr] |{\Downarrow \alpha_{f,g,h}}} && \li{fg}(A_{h}) \ar[rru]_{\li{fg}(X_h)} \ar[d]_{\theta_{fg,h}} {\ar @{} [rr] |{\Downarrow l_{fg,h}}} && \\
&& A_{fgh} \ar[rru]_{X_{fgh}} &&
}
\] holds in $\bb(F_f(F_g (A_h)),B_{fgh})$. If $(X,l),$ $(Y,s)$ are 1-cells, a 2-cell  $m:(X,l)\Rightarrow (Y,s)$ is a $G$-indexed family of 2-cells $m=\{m_g:X_g\to Y_g\}$ such that for all $g,f \in G$,  equation

\begin{equation}\label{axiom-2-cell B[G]}
\xymatrixrowsep{0.4in}
\xymatrixcolsep{0.5in}
\xymatrix{
F_{g}(A_{h}) \ar[d]_{\theta_{g,h}} \ar@/^2pc/[rr]^{F_g(X_h)}&& F_g(B_h) \ar[d]^{\rho_{g,h}} {\ar @{} [ll] |{\Downarrow l_{g,h}}} \\
A_{gh} \ar@/^2pc/[rr]_{X_{gh}} \ar@/_2pc/[rr]_{Y_{gh}} && B_{gh} {\ar @{} [ll] |{\Downarrow m_{gh}}}
}
\end{equation}
 \begin{center}  \Huge{\rotatebox[origin=c]{-90}{$=$}} 
\end{center}

\[
\xymatrixrowsep{0.4in}
\xymatrixcolsep{0.5in}
\xymatrix{
F_{g}(A_{h})  {\ar @{} [rr] |{\Downarrow F_g(m_h)}}\ar[d]_{\theta_{g,h}} \ar@/^2pc/[rr]^{F_g(X_h)} \ar@/_2pc/[rr]^{F_g(Y_h)} && F_g(B_h) \ar[d]^{\rho_{g,h}}\\
A_{gh}  \ar@/_2pc/[rr]_{Y_{gh}} && {\ar @{} [ll] |{\Downarrow s_{g,h}}} B_{gh} 
}
\] 
 holds in $\bb(F_g(A_h),B_{gh})$. 
 
 The (vertical) composition in each category $\Bc[G]((A,\theta, \alpha),(B,\rho ,\beta))$ is defined pointwise.

 Now, let us define the horizontal composition 
 $\circ:\Bc[G]((A,\theta, \alpha),(B,\rho ,\beta))\times \Bc[G]((C,\kappa,\gamma),(A,\theta, \alpha))\to \Bc[G]((C,\kappa,\gamma),(B,\rho ,\beta)).$
 If $(A,\theta, \alpha)$ and $(B,\rho ,\beta)$ are 0-cells, and $$(X,l) \in \Bc[G]((A,\theta, \alpha),(B,\rho ,\beta)),\,\, (Y,s)\in \Bc[G]((C,\kappa,\gamma),(A,\theta, \alpha))$$ are 1-cells, define 
 $$(X,l)\circ (Y,s)=(Z,t), $$
 where $Z_g=X_g\circ Y_g$, and $t_{g,h}= \big( 1_{X_{gh}}\circ s_{g,h} \big)\big(  l_{g,h}\circ 1_{F_g(Y_h)}\big)$, for any $g,h\in G$.
 The horizontal composition of 2-cells in $\bb[G]$ is just the horizontal composition of 2-cells in $\bb$.
\begin{lema} $\bb[G]$ is a 2-category endowed with a strict action of $G$.
\end{lema}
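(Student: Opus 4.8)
The plan is to verify that the data defined on $\bb[G]$ satisfies the two-category axioms, and then to exhibit a strict $G$-action on it. First I would check that $\bb[G]$ is a genuine 2-category. The main work is associativity and unitality of the horizontal composition $\circ$. Since $Z_g = X_g \circ Y_g$ is built from the strictly associative horizontal composition in $\bb$, the underlying families of 1-cells compose strictly associatively; what must be checked is that the attached 2-cell data $t_{g,h}$ compose coherently. For three composable 1-cells $(X,l), (Y,s), (W,u)$ I would compute both bracketings of the resulting family of 2-cells and use the interchange law in $\bb$ together with naturality to see they agree; the identity 1-cell $I_{(A,\theta,\alpha)} = (I_{A_g}, l)$ with $l_{g,h} = \idn_{\theta_{g,h}}$ serves as a strict unit because $l_{g,h}$ is an identity 2-cell. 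I would also confirm that the defining constraints \eqref{axiom-1-cell B[G]} and \eqref{axiom-2-cell B[G]} are preserved under composition, so that $t$ really defines a 1-cell of $\bb[G]$ and horizontal composites of 2-cells remain 2-cells; this is where one must paste the hexagonal/pentagonal coherence diagrams and invoke the object axiom \eqref{obj-axiom} and the modification naturality of $\chi_{f,g}$.

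Next I would construct the strict $G$-action. For each $g \in G$ define a 2-functor $T_g : \bb[G] \to \bb[G]$ by index shift: on a 0-cell $(A,\theta,\alpha)$ set $(T_g(A))_h = A_{hg}$, with transition 1-cells $(T_g\theta)_{h,k} = \theta_{h,kg}$ and associator $(T_g\alpha)_{h,k,f} = \alpha_{h,k,fg}$, and analogously on 1-cells and 2-cells by shifting the second index. One checks directly that the shifted data again satisfy \eqref{obj-axiom}, \eqref{axiom-1-cell B[G]} and \eqref{axiom-2-cell B[G]}, since those equations are indexed uniformly over $G$ and the shift is just relabeling. Because the shift is a bijection on $G$ that composes on the nose, $T_g \circ T_h = T_{gh}$ holds strictly, each $T_g$ is visibly a 2-functor (it sends identities to identities and respects both compositions without any nontrivial constraint), and the comparison pseudonatural transformations $\chi'_{g,h}$ together with the modifications $\omega'_{g,h,f}$ can all be taken to be identities. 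This is exactly the content of Definition \ref{strict-act}, so the action is strict.

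The step I expect to be the main obstacle is verifying that horizontal composition is well defined at the level of the 2-cell data, \emph{i.e.} that the composite constraint $t_{g,h} = (1_{X_{gh}}\circ s_{g,h})(l_{g,h}\circ 1_{F_g(Y_h)})$ again satisfies the coherence equation \eqref{axiom-1-cell B[G]}. This requires pasting two instances of \eqref{axiom-1-cell B[G]} (one for $(X,l)$, one for $(Y,s)$) along the three-fold associator $\alpha$ and $\beta$, and reconciling the two occurrences of the constraint $(\chi_{f,g})_{(-)}$ via its naturality in the relevant 1-cell, exactly as encoded by the comparison constraint $c$ of \eqref{comparison-const}. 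Organizing this pasting so the associativity isomorphisms of the 2-functors $F_g$ (which we suppressed in the displayed diagrams, and which are identities by Corollary \ref{action with 2-functors}) cancel correctly is the delicate bookkeeping; once $F_g$ is assumed to be a strict 2-functor this diagram chase reduces to the interchange law and the object axiom \eqref{obj-axiom}, and the verification goes through.
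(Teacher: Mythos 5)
Your proposal is correct and follows essentially the same route as the paper: the paper likewise treats the 2-category axioms for $\bb[G]$ as a straightforward (if tedious) verification, and then defines the strict action by exactly your index-shift 2-functors $L_g(A)_x=A_{xg}$, $L_g(\theta)_{x,y}=\theta_{x,yg}$, $L_g(\alpha)_{x,y,z}=\alpha_{x,y,zg}$, with strictness coming from $L_g\circ L_h=L_{gh}$ holding on the nose. Your additional observation that the shifted data still satisfy \eqref{obj-axiom}, \eqref{axiom-1-cell B[G]} and \eqref{axiom-2-cell B[G]} because only the last index is translated is the correct justification of a point the paper leaves implicit.
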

\pf The proof that   $\bb[G]$ is indeed a 2-category follows by a straightforward calculation. Let us define now a canonical strict action of $G$ on the 2-category $\bb[G]$. For any $g\in G$ define the 2-functors $L_g:\Bc[G]\to \Bc[G]$ as follows. If $(A,\theta,\alpha)$ is a 0-cell,  $g,x\in G$, then
\[L_g(A)_x=A_{xg}, \quad Lg(\theta)_{x,y}=\theta_{x,yg}, \quad L_g(\alpha)_{x,y,z}=\alpha_{x,y,zg}.\] If $(X,l):(A,\theta,\alpha) \to (B,\rho,\beta)$ is a 1-cell, \[L_g(X)_x=X_{xg}, \quad L_g(l)_{x,y}=l_{xyg}.\] If $m:(X,l)\Rightarrow (Y,s)$ is a 2-cell, then $L_g(m)_x=m_{xg}$, for any $x\in G$.
Since  the $L_g $ are 2-functors such that $L_g\circ L_h=L_{gh}$ for all $g, h\in G$ and $L_e=\operatorname{Id}_{\bb[G]}$, $L$ defines a strict action of $G$  on $\bb[G]$.
\epf

There is a pseudofunctor $\Hc:\Bc\to \Bc[G]$ defined as follows.
 If $A$  is a 0-cell in $\Bc$,  then \[\HH(A)= (\{F_g(A)\}, (\chi^0_{g,h})_A,\omega_{g,h,f})_{f,g,h\in G},\]if $X:A\to B$ is a 1-cell, then 
 $\HH(X)= (F_g(X),(\chi_{g,h})_X)$ and for 2-cells $m:X\to Y$, 
 $\HH(m)_g=F_g(m)$, where $f,g,h\in G$. The fact that $\omega$ are modifications implies that $\HH(X)$ is indeed a 1-cell in $\Bc[G]$.
The following proposition implies immediately Theorem \ref{coherence-groupact}
\begin{prop}\label{propiedades de HH}  $\Hc:\Bc\to \Bc[G]$ is a  $G$-biequivalence.\end{prop}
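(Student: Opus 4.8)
The plan is to establish two independent facts: that $\Hc$ is a biequivalence of $2$-categories, and that it carries a $G$-pseudofunctor structure $(\gamma,\Pi)$ intertwining the action $(\Fc,\chi,\omega)$ on $\Bc$ with the strict action $L$ on $\bb[G]$. Throughout I use unitality of the action, so that $F_1=\Id_\Bc$ and $(\chi^0_{g,1})_A=(\chi^0_{1,g})_A=\id$. I first check that $\Hc$ is a local equivalence, the key point being that evaluation at $g=1$ produces a quasi-inverse. Given a $1$-cell $(X,l)\colon\Hc(A)\to\Hc(B)$, I set $X_1\colon A\to B$ and observe that the isomorphism $2$-cells $l_{g,1}\colon F_g(X_1)\Rightarrow X_g$ (well typed because $(\chi^0_{g,1})=\id$) assemble, using \eqref{axiom-1-cell B[G]} at $h=1$, into an isomorphism $(X,l)\cong\Hc(X_1)$ in $\bb[G]$; this yields essential surjectivity on $1$-cells. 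For full faithfulness, a $2$-cell $m\colon\Hc(X)\Rightarrow\Hc(Y)$ is a family $m_g\colon F_g(X)\Rightarrow F_g(Y)$ satisfying \eqref{axiom-2-cell B[G]}; I set $n:=m_1$, note that $\Hc(n)_1=n$ gives uniqueness, and deduce $m_g=F_g(m_1)$ from \eqref{axiom-2-cell B[G]} at $h=1$, where the relevant structure $2$-cells $(\chi_{g,1})_X,(\chi_{g,1})_Y$ are identities.

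For essential surjectivity on $0$-cells I would show $(A,\theta,\alpha)\simeq\Hc(A_1)$ for every object of $\bb[G]$. The comparison $1$-cell $(X,l)\colon\Hc(A_1)\to(A,\theta,\alpha)$ is given by $X_g:=\theta_{g,1}\colon F_g(A_1)\to A_g$ and $l_{g,h}:=\alpha_{g,h,1}$, the latter having precisely the source and target required of $l_{g,h}$, namely $\theta_{g,h}\circ F_g(\theta_{h,1})\Rightarrow\theta_{gh,1}\circ(\chi^0_{g,h})_{A_1}$; the $1$-cell coherence \eqref{axiom-1-cell B[G]} for $(X,l)$ is then exactly the object coherence \eqref{obj-axiom} for $(A,\theta,\alpha)$ with last index equal to $1$. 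Each $\theta_{g,1}$ is an equivalence $1$-cell, so together with the local equivalence already proved, $(X,l)$ is an equivalence in $\bb[G]$; hence $\Hc$ is a biequivalence. If convenient, Proposition \ref{equivalence-1cells-iso} lets us assume these equivalences are isomorphisms.

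Next I would produce the $G$-pseudofunctor data. Comparing the two composites on a $0$-cell $A$ gives $\Hc(F_g(A))=(\{F_xF_g(A)\}_x,(\chi^0_{x,y})_{F_g(A)},\omega)$ and $L_g(\Hc(A))=(\{F_{xg}(A)\}_x,(\chi^0_{x,yg})_A,\dots)$, so I define $(\gamma_g)_A$ to be the $1$-cell with components $(\chi^0_{x,g})_A\colon F_xF_g(A)\to F_{xg}(A)$ and structure $2$-cells $(\omega_{x,y,g})_A$, which have exactly the required types; since $\chi$ is a pseudonatural equivalence each component is an equivalence, and the pseudonaturality $2$-cells of $\gamma_g$ are supplied by $(\chi_{x,g})_X$. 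The modification $\Pi_{f,g}$ is built analogously from $\omega$. The unitality conditions $\gamma_1=\id_\Hc$ and $\Pi_{g,1}=\Pi_{1,g}=\idn$ of \eqref{G-funct1} follow at once from $(\chi^0_{x,1})=\id$.

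I expect the main obstacle to be the coherence axiom \eqref{Pi-axiom} for $\Pi$. Since the action $L$ on $\bb[G]$ is strict, in \eqref{Pi-axiom} all constraints $\widetilde\chi_{f,g}$ and $\widetilde\omega_{f,g,h}$ are identities, so the large pasting diagram collapses to an equality between two composites of components of $\omega$ together with comparison constraints $c$. Evaluating at a $0$-cell $A$ and unwinding $(\gamma_g)_A$ and $(\Pi_{f,g})_A$ in terms of $\omega$, this equality is precisely the pentagon coherence \eqref{omega-axiom} for $\omega$ (equivalently \eqref{mn2}). The only genuinely laborious part is matching, term by term, each $2$-cell appearing in the two pastings of \eqref{Pi-axiom} with a factor of \eqref{omega-axiom}, and checking that the comparison constraints absorb the naturality squares of $\chi$; everything else is a direct specialisation of axioms already in force on $(A,\theta,\alpha)$, on $1$-cells, and on the action.
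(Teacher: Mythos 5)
Your proposal is correct and follows essentially the same route as the paper's proof: the biequivalence is established by evaluation at the group unit (using $\theta_{g,1}$ for bi-essential surjectivity, $l_{g,1}$ for local essential surjectivity, and $m_1$ with unitality for local full faithfulness, exactly as the paper does), and the $G$-structure is given componentwise by $(\chi^0_{x,g})_A$ with structure $2$-cells built from $\omega$, pseudonaturality cells $(\chi_{x,g})_X$, and $(\Pi_{f,g})_x=\omega_{x,f,g}$, with axiom \eqref{Pi-axiom} reducing to \eqref{omega-axiom} since the action on $\Bc[G]$ is strict. The only cosmetic difference is that the paper takes $(\omega^{-1}_{f,h,g})_A$ for the structure $2$-cells of $\gamma^0_A$, a matter of the orientation convention for the squares $l_{f,h}$.
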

\pf 
If $(A,\theta,\alpha)$ is an object in $\bb[G]$, then  the 1-equivalences $\theta_{g,e}:\HH(A_e)_g\to A_g$ and the 2-cells
\[
\xymatrixrowsep{0.4in}
\xymatrixcolsep{0.4in}
\xymatrix{
F_{g}\HH(A_e)_h \ar[d]_{F_g(\theta_{h,e})} \ar[rr]^{\chi_{g,h}^0}&& \HH(A_e)_{gh} \ar[d]^{\theta_{gh,e}}\\
F_g(A_{h}) \ar[rr]_{\theta_{g,h}^0}&& A_{ghf} {\ar @{} [llu] |{\Downarrow\alpha_{g,h,e}}},
}
\]defines a 1-equivalence from  $\HH(A_1)$ to $A$, that is, $\HH$ is bi-essentially
surjective.


Let $A$ and $B$ be objects in $\Bc$, and $(X,l):\HH(A)\to \HH(B)$ be a 1-cell in $\bb[G]$. The invertible 2-cells $l_{g,1}:\HH(X_1)_g\to X_g$ define an invertible 2-cell from $\HH(X_1)$ to $X$. Then $\HH$ is locally essentially surjective.

If $X,Y, \in \Bc(A,B)$ and  $f,f':X\to Y$ such that $\Hc(f)=\Hc(f')$. Thus, $\Hc(f)_1=\Hc(f')_1$, 
but since we are considering a unital action,  $f=\Hc(f)_1=\Hc(f')_1=f'$, that is, $\HH$ is locally faithful. Suppose $w:\HH(X)\to \HH(Y)$ is a 2-cell in 
$\bb[G]$, condition \eqref{axiom-2-cell B[G]} implies that $w_g=F_g(m_1)$, then $w=\HH(w_1)$. Since, $\HH$ is bi-essentially surjective and locally  fully faithful, $\HH$ is a biequivalence.

To see that $\Hc$ has a canonical structure of $G$-pseudofunctor, we note that \[(\Hc\circ F_g)_x=F_x\circ F_g,\  \  \  \   \  \  (L_g\circ \Hc)_x=F_{xg},\] for any $x,g\in G$. Then, using the  pseudonatural transformations $\chi_{x,g}:F_{x}\circ F_g\to F_{xg}$, we define a pseusonatural transformation
\[\gamma_g:\Hc\circ F_g\to L_g\circ \Hc,\]
as follows. For any 0-cell $A\in \Obj(\Bc)$ we have to define an equivalence 1-cell $\gamma^0_A:\Hc\circ F_g(A)\to L_g\circ \Hc(A)$ in $\bb[G]$. Set $\gamma^0_A=(X,l)$, where, for any $x,f,h\in G$ 
$$X_x=(\chi^0_{x,g})_A, \quad l_{f,h}=(\omega^{-1}_{f,h,g})_A.$$
Axiom \eqref{omega-axiom} implies that morphisms $l_{f,h}$ fulfill condition \eqref{axiom-1-cell B[G]}. Thus, $\gamma^0_A$ is indeed a 1-cell in $\bb[G]$. To complete the definition of of the pseudonatural equivalence $ \gamma_g $, we have to define, 2-cells in $\bb[G]$
$$( \gamma_g )_X:  \gamma^0_B\circ 
\Hc F_g(X)\to L_g\Hc(X)\circ \gamma^0_A,$$
for any 1-cell $X\in\Bc(A,B)$. Set $\big( ( \gamma_g )_X \big)_x=(\chi_{x,g})_X$, for any $x\in G$. The fact that $\omega$ are modifications, imply that 2-cells  $\big( ( \gamma_g )_X \big)_x$ satisfy \eqref{axiom-2-cell B[G]}.
To define the modifications 

 \[
\xymatrix{
&L_{f}\HH F_{g} \ar[rr]^{1_{L_{f}}\otimes \gamma_g}&& L_{f}L_{g}\HH \ar[rd]^{id }\\
\HH F_{f}\ F_{g} \ar[ru]^{\gamma_f\otimes 1_{F_{g}}} \ar[rrd]_{1_{\HH}\otimes \chi_{f,g}}&&&& L_{fg}\HH\\
&&\HH F_{fg} \ar[rru]_{\gamma_{fg}} {\ar @{} [uu] |{\Downarrow\Pi_{f,g}}}&&
}
\] we note that
\[[(1_{L_f\otimes \gamma_g})\circ (\gamma_f\otimes 1_{F_g})]_x= \chi_{xf,g} \circ  (\chi_{x,f}\otimes 1_{F_g}), \  \  x,f,g \in G,\]
and
\[[(1_{\HH}\otimes \chi_{f,g})\circ (\gamma_{fg})]_x=\chi_{x,fg}\circ (1_{F_x}\otimes \chi_{f,g}), \  \  x,f,g \in G.\] 
Then we define $(\Pi_{f,g})_x=\omega_{x,f,g}$ for all $x,g,f\in G$.

Since $\omega_{x,f,g}$ are modifications,  $\Pi_{g,h}$ turns out to be modifications for any $g,h\in G$. Condition described in diagram \eqref{Pi-axiom} is exactly  diagram \eqref{omega-axiom}.  
\epf

\section{The equivariant 2-category}\label{Section:equivariant}
Let $G$ be a group. Denote by $\mathcal{I}$ the unit 2-category endowed with the trivial action  of $G$, and
assume that $\Bc$ is a 2-category with an action of $G$.
\begin{defi}\label{def-equiv} The \emph{equivariant 2-category } is $\Bc^G= \Pse^G(\mathcal{I},\Bc)$. 0-cells, 1-cells and 2-cells in $\Bc^G$ will be called \textit{equivariant} 0-cells, 1-cells and 2-cells, respectively.
\end{defi}

\begin{prop} Assume  $\Bc$ and $ \widetilde\Bc$ are $G$-biequivalent. Then the 2-categories $\Bc^G$, $\widetilde\Bc^G$ are biequivalent.
\end{prop}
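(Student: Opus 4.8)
The plan is to exhibit an explicit pseudofunctor between the equivariant $2$-categories induced by a $G$-biequivalence and to check it is again a biequivalence. Suppose $\Hc\colon\Bc\to\widetilde\Bc$ is a $G$-pseudofunctor that is also a biequivalence; write $\Hc=(\Hc,\gamma,\Pi)$ for its $G$-structure. Recall from Definition \ref{def-equiv} that $\Bc^G=\Pse^G(\mathcal{I},\Bc)$ and $\widetilde\Bc^G=\Pse^G(\mathcal{I},\widetilde\Bc)$, so a $0$-cell of $\Bc^G$ is a $G$-pseudofunctor $\mathcal{I}\to\Bc$, equivalently an equivariant object of $\Bc$, and similarly for $\widetilde\Bc$. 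First I would define the induced pseudofunctor $\Hc^G\colon\Bc^G\to\widetilde\Bc^G$ by post-composition with $\Hc$, namely sending a $G$-pseudofunctor $T\colon\mathcal{I}\to\Bc$ to the composite $\Hc\circ T\colon\mathcal{I}\to\widetilde\Bc$, and analogously sending equivariant $1$-cells and $2$-cells by whiskering with $\Hc$. The $G$-structure data on $\Hc\circ T$ is assembled from $\gamma$, $\Pi$ and the $G$-structure of $T$; this is the routine but bookkeeping-heavy part, using the comparison constraints $c$ from \eqref{comparison-const} to make the required modifications coherent.

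Next I would verify that $\Hc^G$ is a genuine pseudofunctor of $2$-categories; this amounts to checking that the associativity and unit constraints built from those of $\Hc$ satisfy the pseudofunctor axioms, which follows from the corresponding axioms for $\Hc$ together with naturality. The substance of the proof is then to show $\Hc^G$ is a biequivalence, i.e.\ that it is bi-essentially surjective on $0$-cells and locally an equivalence of hom-categories. Local equivalence is the easier half: since $\Hc$ is a biequivalence it is locally an equivalence, and the hom-categories of $\Bc^G$ and $\widetilde\Bc^G$ are built from hom-categories of $\Bc$ and $\widetilde\Bc$ together with modification data that is transported isomorphically, so $\Hc^G$ inherits local full faithfulness and local essential surjectivity.

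The main obstacle is bi-essential surjectivity: given an equivariant object $\widetilde S$ of $\widetilde\Bc$, I must produce an equivariant object $S$ of $\Bc$ together with an equivariant $1$-cell equivalence $\Hc^G(S)\simeq\widetilde S$. Since $\Hc$ is bi-essentially surjective there is an object $S$ of $\Bc$ and an equivalence $e\colon\Hc(S)\to\widetilde S$ in $\widetilde\Bc$; the difficulty is that $S$ a priori carries no equivariant structure. The plan here is to \emph{transport} the equivariant structure of $\widetilde S$ back along $e$ and the pseudonatural equivalences $\gamma_g$, exactly in the spirit of Lemma \ref{transport}: the composites $F_g(S)\to\Hc^{-1}\widetilde F_g\Hc(S)\to\cdots$ obtained by conjugating the structure maps of $\widetilde S$ through $e$ and $\gamma_g$ furnish the required equivariant $1$-cells on $S$, and the coherence modification on $S$ is then the unique one determined by the analogue of axiom \eqref{Pi-axiom}, with its validity following from that of the axioms for $\widetilde S$, $\gamma$ and $\Hc$. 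By construction $e$ upgrades to an equivariant equivalence, completing bi-essential surjectivity and hence the proof.
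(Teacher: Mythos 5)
Your proposal is correct, and it fills in exactly what the paper leaves unsaid: the paper's entire proof is the word ``Straightforward,'' and the intended argument is precisely yours --- post-composition with the $G$-biequivalence $\Hc$ gives a pseudofunctor $\Pse^G(\mathcal{I},\Bc)\to\Pse^G(\mathcal{I},\widetilde\Bc)$, local equivalence is inherited from that of $\Hc$ by unique lifting of the modification data, and bi-essential surjectivity follows by transporting the equivariant structure of an object of $\widetilde\Bc^G$ back along a chosen equivalence $e\colon\Hc(S)\to\widetilde S$ and the $\gamma_g$, with coherence forced by local full faithfulness. No gap; your identification of bi-essential surjectivity as the only step with real content is the right reading of why the authors felt entitled to omit the proof.
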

\pf Straightforward.\epf

\begin{lema}\label{forget-2funct} There exists a forgetfull 2-functor $\Phi:\Bc^G\to \Bc$.
 \qed
\end{lema}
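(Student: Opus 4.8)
The plan is to construct the forgetful 2-functor $\Phi:\Bc^G\to\Bc$ by evaluating every equivariant datum at the unique 0-cell $\star$ of the unit 2-category $\mathcal{I}$, and then verify that this assignment respects the 2-categorical structure strictly. Recall that an equivariant 0-cell is a $G$-pseudofunctor $(\Hc,\gamma,\Pi):\mathcal{I}\to\Bc$; since $\mathcal{I}$ has a single 0-cell $\star$, such an $\Hc$ is determined by the object $\Hc(\star)\in\Obj(\Bc)$ together with the pseudonatural equivalences $\gamma_g$ and modifications $\Pi_{f,g}$. First I would set $\Phi(\Hc,\gamma,\Pi):=\Hc(\star)$ on 0-cells. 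On 1-cells, an equivariant 1-cell is a $G$-pseudonatural transformation $(\theta,\{\theta_g\})$, and I would set $\Phi(\theta,\{\theta_g\}):=\theta^0_\star$, the 1-cell component of $\theta$ at $\star$. On 2-cells, an equivariant 2-cell is a $G$-modification $\alpha:\theta\Rightarrow\sigma$, and I would set $\Phi(\alpha):=\alpha_\star$.

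Next I would check that $\Phi$ is a 2-functor, that is, it strictly preserves identities and both compositions. For the identity 1-cell on $(\Hc,\gamma,\Pi)$, the underlying pseudonatural transformation is the identity on $\Hc$, whose component at $\star$ is $I_{\Hc(\star)}$, so $\Phi$ sends identities to identities. For horizontal composition of equivariant 1-cells, recall from the explicit formula preceding Definition~\ref{def-equiv} that the composite $(\sigma,\{\sigma_g\})\circ(\theta,\{\theta_g\})=(\rho,\{\rho_g\})$ has underlying pseudonatural transformation $\rho=\sigma\circ\theta$; evaluating at $\star$ gives $\rho^0_\star=\sigma^0_\star\circ\theta^0_\star$, which is exactly $\Phi(\sigma)\circ\Phi(\theta)$. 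Vertical composition of $G$-modifications is by definition the ordinary composition of modifications, so evaluating at $\star$ is manifestly functorial, and the interchange law is inherited from that of $\Bc$. Because all these identifications are equalities rather than mere isomorphisms, the associativity and unitality constraints of $\Phi$ are identities, so $\Phi$ is genuinely a 2-functor and not merely a pseudofunctor.

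The only genuine point requiring care is that the forgotten data $(\gamma,\Pi)$, $\{\theta_g\}$, and the $G$-modification axioms impose no obstruction to this assignment being well-defined on morphisms and on 2-cells: I must confirm that $\theta^0_\star$ and $\alpha_\star$ are legitimate 1- and 2-cells of $\Bc$, which is immediate since a $G$-pseudonatural transformation contains an ordinary pseudonatural transformation $\theta$ and a $G$-modification contains an ordinary modification $\alpha$. I do not expect any serious obstacle here; the statement is essentially a bookkeeping lemma asserting that stripping off the equivariance data is strictly compatible with the 2-category operations. The mild subtlety is only in verifying that the composition formula for equivariant 1-cells reduces, on the $\star$-component, to composition in $\Bc$ with trivial constraints; this follows because the omitted associativity constraints of the pseudofunctors $F_g$ play no role once we restrict to the single component at $\star$.
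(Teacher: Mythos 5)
Your proposal is correct and follows essentially the same route as the paper: the paper also defines $\Phi$ by $\Phi(\Hc,\gamma,\Pi)=\Hc(\star)$ on equivariant 0-cells, by taking the underlying pseudonatural transformation (equivalently, its component at $\star$) on equivariant 1-cells, and by taking the underlying modification on 2-cells. The only difference is that you spell out the strict functoriality checks (identities, horizontal and vertical composition) that the paper leaves implicit, which is a harmless elaboration rather than a different argument.
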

\pf If $(\Hc, \Pi,\gamma)$ is an equivariant 0-cell in $\Bc^G$ , then $\Phi(\Hc, \Pi,\gamma)=\Hc(\star)$. If $(\theta, \{\theta_g\}_{g\in G})$ is an equivariant 1-cell, then $\Phi(\theta, \{\theta_g\}_{g\in G})=\theta$. On 2-cells the functor $\Phi$ is the identity.
\epf

\subsection{Unpacking definition of equivariantization}\label{section:unpacking} We shall explicitly describe the 2-category $ \Bc^G $. This would allows us to show concrete examples and obtain some results in Section \ref{Section-center}.

\medbreak

We shall assume that there is a unital action of $G$  on the 2-category $\Bc$ such that all pseudofunctors $F_g$ are 2-functors. This is possible using Corollary \ref{action with 2-functors}. The 2-category $\Bc^G$ has 0-cells
triples $(A, \{U_g\}_{g\in G}, \{\Pi_{g, h}\}_{g, h\in G})$, where
\begin{itemize}
 \item[$\bullet$] $A$ is a 0-cell in $\Bc$;
 
 \item[$\bullet$] $U_g$ are invertible 1-cells in $\Bc(A, F_g(A))$;
 
 \item[$\bullet$] $\Pi_{g,h}:(\chi^0_{g,h})_A\circ F_g(U_h)\circ U_g\to U_{gh} $ are isomorphisms 2-cells 
 in the category $\Bc(A,F_{gh}(A))$ such that
\end{itemize}
$$U_1=I_A, \; \Pi_{g,1}=\id_{U_g}= \Pi_{1,g},$$
\begin{align}\label{a-pi-eq}\begin{split}
&\Pi_{f,gh}\big(\id_{(\chi^0_{f,gh})_A}\circ F_f(\Pi_{g,h})\circ \id_{U_f} \big) \big((\omega_{f,g,h})_A\circ\id_{F_fF_g(U_h)F_f(U_g)U_f} \big)= \\  &=  \Pi_{fg,h}\big(\id_{(\chi^0_{fg,h})_AF_{fg}(U_h)}\circ \Pi_{f,g}\big)\big(\id_{(\chi^0_{fg,h})_A}\circ (\chi_{f,g})_{U_h}\circ \id_{F_f(U_g)U_f}\big)
                            \end{split}
\end{align}
for all $g,h,f\in G$. For short, the collection $(A, \{U_g\}_{g\in G}, \{\Pi_{g, h}\}_{g, h\in G})$ will be    denoted  simply as
$(A, U, \Pi)$.

\medbreak

Given two equivariant 0-cells $(A, U, \Pi)$, $(\widetilde A, \widetilde U, \widetilde\Pi)$,
an \textit{equivariant 1-cell } is a pair $(\theta, \{\theta_g\}_{g\in G})\in \Bc^G((A, U, \Pi),(\widetilde A, \widetilde U, \widetilde\Pi))$ where 
\begin{itemize}
\item $\theta:\Bc(A, \widetilde A)$ is a 1-cell,
\item  and for any $g\in G$, $\theta_g:F_g(\theta)\circ U_g \Rightarrow \widetilde U_g\circ \theta$, are invertible 2-cells such that $\theta_1=\id_\theta,$ and such that for any $g,f\in G$
\end{itemize}
\begin{align}\label{equiva-1cell}
\begin{split}
\big(\widetilde \Pi_{g,f}\circ \id_{\theta}\big) \big( \id_{(\chi^0_{g,f})_AF_g(\widetilde U_f)} \circ\theta_g\big)\big(\id_{(\chi^0_{g,f})_A} \circ F_g(\theta_f)\circ\id_{U_g} \big)=\\
=\theta_{gf} \big(\id_{F_{gf}(\theta)}\circ\Pi_{g,f}\big)\big(  (\chi_{g,f})_\theta\circ\id_{F_g(U_f)U_g}\big).
\end{split}
\end{align}
If $(\theta, \{\theta_g\}_{g\in G}),  (\sigma, \{\sigma_g\}_{g\in G}):(A, U, \mu)\to(\widetilde A, \widetilde U, \widetilde\mu)$ are equivariant 1-cells, an \textit{equivariant 2-cell} $\alpha: (\theta, \{\theta_g\}_{g\in G})\to  (\sigma, \{\sigma_g\}_{g\in G})$ is a 2-cell $\alpha:\theta\to \sigma$
such that for all $g\in G$
\begin{align}\label{equiva-2cell} 
(\id_{\widetilde U_{g}}\circ \alpha) \theta_g=
\sigma_g (F_g(\alpha)\circ \id_{U_g}).
\end{align}

Suppose  that  $(A, U, \mu)$, $(\widetilde A, \widetilde U, \widetilde\mu)$, $(A', U', \mu')$
are equivariant 0-cells, and 
$$(\theta, \theta_g):(A', U', \mu')\to(\widetilde A, \widetilde U, \widetilde\mu),
(\sigma, \sigma_g):(A, U, \mu)\to (A', U', \mu')$$
are equivariant 1-cells, then the composition
$(\theta, \theta_g)\circ (\sigma, \sigma_g): (A, U, \mu)\to (\widetilde A, \widetilde U, \widetilde\mu)$ is defined as $(\theta, \theta_g)\circ (\sigma, \sigma_g)=(\theta \circ \sigma, (\theta \circ \sigma)_g)$, where for any $g\in G$
\begin{equation}\label{comp-equiva-1cell}
(\theta \circ \sigma)_g=(\theta_g\circ \id_{\sigma})(\id_{F_g(\theta)}\circ \sigma_g).
\end{equation}

\section{Group actions  from graded tensor categories}\label{Section:examples-from-cat}

Starting with a $G$-graded tensor category $\oplus_{g\in G} \ca_g$, we shall construct a $G$-action on the 2-category of $\ca_1$-representations.

\subsection{Group actions on tensor categories}\label{subsect:groupact} Let $G$ be a finite group and 
$\ca$ be a finite tensor category.
An action of $G$ on $\ca$ consists of the following data:
\begin{itemize}
 \item tensor autoequivalences $(g_*, \xi^g):\ca\to \ca$ for any $g\in G$, 
 \item a natural isomorphism $\zeta:\Id_\ca\to (1)_*$,
 \item and monoidal natural isomorphisms 
$\nu_{g,h}:g_*\circ h_*\to (gh)_*$,
\end{itemize}
 such that for all $X \in \ca$, $g,h,f\in G$
 \begin{align}\label{group-act-tc1}
 (\nu_{gh,f})_X (\nu_{g,h})_{f_*(X)}=(\nu_{g,hf})_X  g_*((\nu_{h,f})_X ), 
 \end{align}
  \begin{align}\label{group-act-tc2} 
 (\nu_{g,1})_X g_*(\zeta_X)=\id_X=  (\nu_{1,g})_X \zeta_{g_*(X)}.
\end{align}
For simplicity, we shall assumed that $(1)_*=\Id_\ca$, $\zeta=\id$ and $\mu_{g,1}=\id=\nu_{1,g}$ 
for all $g\in G$.

If a finite group $G$ acts on a finite tensor category $\ca$, there is associated a new finite tensor category $\ca^G$ 
called the \emph{equivariantization} of $\ca$ by $G$. 
An  object in $\ca^G$  is a pair $(X, s)$, where $X\in \ca$ is an object together
 with isomorphisms $s_g:g_*(X)\to X$ satisfying
\begin{equation}\label{group-act-tc3}
s_1=\id_X,\quad
s_{gh}\circ (\nu_{g,h})_X=s_g\circ g_*(s_h),
\end{equation}
for all $g, h \in G$. A
$G$-\emph{equivariant morphism} $f: (V, s) \to (W, t)$ between $G$-equivariant objects $(V, s)$ and $(W, t)$, 
is a morphism $f: V \to W$ in
$\ca$ such that $f\circ s_g = t_g\circ g_*(f)$ for all $g \in G$. The category $\ca^G$ has a monoidal product as follows. If $(V,s), (W,t)\in \ca^G$, then $(V,s)\ot (W,t)=(V\ot W, r)$, where for any $g\in G$
$$r_g=(s_g\ot t_g) (\xi^g_{V,W})^{-1}.$$

For more details we refer the reader to \cite{BN}, \cite{BuNa}, 
\cite{ENO2}.

\medbreak

There is also associated the graded tensor category $\ca[G]$, with underlying abelian category 
$\ca[G]= \oplus_{g\in G} \ca_g$, where $\ca_g=\ca$ for any $g\in G$. If $X\in \ca$ is an object, the 
object in $\ca_g$ is denoted by $[X,g]$. The tensor product is 
$$[X,g] \ot [Y,h]= [X\ot g_*(Y), gh], \quad X, Y\in \ca, g,h\in G.$$
The reader is refered to \cite{Ta} for the complete monoidal structure of this tensor category.

\subsection{Representations of tensor categories}\label{Section:represent} A left  $\ca$-\emph{module category} over a tensor
category $\ca$ is a   finite   $\ku$-linear  abelian category $\Mo$ equipped with 
\begin{itemize}
 \item[$\bullet$]
a $\ku$-bilinear bi-exact 
bifunctor $\otb: \ca \times \Mo \to \Mo$;
 \item[$\bullet$] natural associativity
and unit isomorphisms $m_{X,Y,M}: (X\otimes Y)\otb M \to X \otb
(Y\otb M)$, $\ell_M: \uno \otb M\to M$, such that 
\begin{equation}\label{left-modulecat1} m_{X, Y, Z\otb M}\; m_{X\otimes Y, Z,
M}= (\id_{X}\otb m_{Y,Z, M})\;  m_{X, Y\otimes Z, M}(a_{X,
Y, Z}\otb \id_{M}),
\end{equation}
\begin{equation}\label{left-modulecat2} (\id_{X}\otb l_M)m_{X,{\bf
1} ,M}= \id_{X \otb M}.
\end{equation}
 \end{itemize}

A \emph{module functor} between module categories $\Mo$ and $\No$ over a
tensor category $\ca$ is a pair $(F,c)$, where
\begin{enumerate}
 \item[$\bullet$] $F:\Mo \to
\No$ is a  left exact  functor;

\item[$\bullet$]   natural isomorphism: $c_{X,M}: F(X\otb M)\to
X\otb F(M)$, $X\in  \ca$, $M\in \Mo$,  such that
for any $X, Y\in
\ca$, $M\in \Mo$:
\begin{align}\label{modfunctor1}
(\id_X \otb  c_{Y,M})c_{X,Y\otb M}F(m_{X,Y,M}) &=
m_{X,Y,F(M)}\, c_{X\otimes Y,M}
\\\label{modfunctor2}
\ell_{F(M)} \,c_{\uno ,M} &=F(\ell_{M}).
\end{align}
\end{enumerate}

Let $\Mo$ and $\No$ be $\ca$-module categories.
We denote by $\Fun_{\ca}(\Mo, \No)$ the category whose
objects are module functors $(F, c)$ from $\Mo$ to $\No$. A
morphism between  $(F,c)$ and $(G,d)\in\Fun_{\ca}(\Mo,
\No)$ is a natural transformation $\alpha: F \to G$ such
that for any $X\in \ca$, $M\in \Mo$:
\begin{gather}
\label{modfunctor3} d_{X,M}\alpha_{X\otb M} =
(\id_{X}\otb \alpha_{M})c_{X,M}.
\end{gather}
We shall also say that $\alpha: F \to G$ is a $\ca$-\emph{module
transformation}.

\medbreak

 Let  $(F, \xi,\phi):\ca\to   \ca$ be a tensor functor and let $(\Mo,
\otb, m)$ be a $ \ca$-module
category. We shall denote by $\Mo^F$ the  $\ca$-module category
with the same underlying
abelian category $\Mo$   and action, associativity and unit  morphisms  defined,
 respectively,  by
\begin{gather}\label{twisted-modc}
  X\otb^F M=F(X)\otb M,\\ m_{X,Y,M}^F=m_{F(X),F(Y),M} (\xi^{-1}_{X,Y}\otb\,
\id_M), \quad l^F_M= l_M (\phi\otb \id_M),\notag
\end{gather}
for all $X, Y\in \ca$, $M\in \Mo$.
Right $\ca$-module and $\ca$-bimodule categories  are defined in a similar way. For the complete definition see \cite{Gr}.

\medbreak

A $\ca$-module category $\Mo$ is \emph{exact} \cite{eo} if, for any projective object
$P\in \ca$, the object $P\otb M$ is projective in $\Mo$ for all
$M\in\Mo$. If $\Mo$ is a left $\ca$-module then $\Mo^{\opp}$ is the  right $\ca$-module over the opposite Abelian category with action 
\begin{equation}\label{opposit-modcat} \Mo^{\opp}\times \ca\to \Mo^{\opp}, (M, X)\mapsto X^*\otb M,
\end{equation}
associativity isomorphisms $m^{\opp}_{M,X,Y}=m_{Y^*, X^*, M}$ for all $X, Y\in \ca, M\in \Mo$. Analogously, if $\Mo$ is a right $\ca$-module category, then $\Mo^{\opp}$ is a left $\ca$-module category. If $\Mo$ is a $\ca$-bimodule category, we denote $ \overline{\Mo}$ the opposite Abelian category, with left and right $\ca$-module structure given as in \eqref{opposit-modcat}.

\subsection{2-categories of representations of tensor categories}

  Suppouse that $\ca$ is a tensor category. The 2-category $\camod$ has as 0-cells, left $\ca$-module categories, 
if $\Mo, \No$ are $\ca$-module categories, then the category $\camod(\Mo, \No)=\Fun_\ca(\Mo, \No).$
Analogously we define the 2-category $\Mod_{\ca}$ of right $\ca$-module categories.

  If $\ca$ is a finite tensor category, the 2-category $\camod_e$ of exact left $\ca$-module categories
is defined in a similar way as $\camod$, with 0-cells being exact left 
$\ca$-module categories. It is known that $\camod_e$ is 2-equivalent 
to ${}_\Do\Mod_e$ if and only if $\ca$ is Morita equivalent to $\Do$. See for example \cite[Thm. 3.4]{FMM}.

\subsection{$G$-Graded tensor categories}\label{graded-tc}
Let $G$ be a finite group. 
A (faithful) $G$-grading on a  finite tensor category $\Do$ is a decomposition 
$\Do=\oplus_{g\in G} \ca_g$, where $\ca_g$ are full abelian subcategories of $\Do$ such that
\begin{itemize}
\item[$\bullet$]  $\ca_g\neq 0$;
 \item[$\bullet$] $\ot:\ca_g\times \ca_h\to \ca_{gh}$ for all
$g, h\in G.$ 
\end{itemize}
 In this case $\ca=\ca_1$ is a tensor subcategory of $\Do$ and each $\ca_g$ is an exact $\ca$-bimodule category.
 We shall assume that 
 $\ca_g\neq 0$ for any $g\in G$. The tensor category $\Do$ is called a $G$-\emph{graded extension} 
 of $\ca$. This class of extensions of tensor categories were studied and classified in \cite{ENO3}.

 If $\Mo$ is a left $\ca$-module category, $X\in \ca_g$, $M\in \Mo$, the functor $G_{X,M}: \overline{\ca_g}\to \Mo$ defined by
 $$G_{X,M}(Y)=({}^*Y\ot X)\otb M,$$
 for any $Y\in \ca_g$, is a $\ca$-module functor. Moreover, the functor $$\Phi:\ca_g\boxtimes_{\ca} \Mo \to \Fun_{\ca}(\overline{\ca_g}, \Mo), \quad \Phi(X\boxtimes M)=G_{X,M},$$
 is an equivalence of $\ca$-module categories.This is a particular case of \cite[Thm. 3.20]{Gr}.

\subsection{The relative center of a bimodule category} The next definition appeared in \cite{GNN}.

\begin{defi}
Let $\ca$ be a tensor category and $\Mo$ a $\ca$-bimodule category. The \emph{
relative center} of $\Mo$ is the category $\Zc_\ca(\Mo)$ of $\ca$-bimodule functors from $\ca$ to $\Mo$.

Explicitly, objects of  $\Zc_\ca(\Mo)$ are pairs $(M,\gamma)$, where $M$ is an objects of $\Mo$ and
$$\gamma=\{\gamma_X:X\overline{\otimes} M\xrightarrow{\sim}M\overline{\otimes}X\}_{X\in \ca}$$ is a natural family of isomorphisms such that
\begin{equation}\label{bien}
\gamma_X\circ \alpha^{-1}_{X,M,Y}\circ \gamma_Y=\alpha^{-1}_{M,X,Y}\circ \gamma_{X\otimes Y}\circ \alpha^{-1}_{X,Y,M},
\end{equation}

where $\alpha_{X,M,Y}:(X\overline{\otimes} M)\overline{\otimes} Y \xrightarrow{\sim}X\overline{\otimes} (M\overline{\otimes} Y)$ are the associativity constraints in $\Mo$.
\end{defi}

Let $\Do=\oplus_{g\in G}\ca_g$ be a $G$-graded tensor category, with $\ca=\ca_1.$ The inclusion functor $\ca\hookrightarrow \Do$ induces the forgetful pseudofunctor $\Hc: {}_\Do\Mod\to  {}_{\ca}\Mod$. 
\begin{prop}\label{cent-forget-relative-center}
 There is a monoidal equivalence
$\Zc(\Hc)\backsimeq \Zc_\ca(\Do).$
\end{prop}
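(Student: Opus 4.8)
The plan is to realise the equivalence by \emph{evaluating a pseudonatural transformation at the regular module category} $\Do\in{}_\Do\Mod$ and reading off a relative half-braiding; I construct functors in both directions and then check that they are mutually quasi-inverse and monoidal. First I would build $\Psi\colon \Zc_\ca(\Do)\to\Zc(\Hc)$. Given a relative central object $(M,\gamma)$, set $V^M_A=M\otb(-)\colon A\to A$ for every left $\Do$-module category $A$ (using the $\Do$-action of the object $M\in\Do$); its structure of $\ca$-module endofunctor of $\Hc(A)$ is built from $\gamma$ together with the associativity constraints of $A$, the module-functor coherence \eqref{modfunctor1} for $V^M_A$ being \emph{exactly} the hexagon \eqref{bien}. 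For a $\Do$-module functor $T\colon A\to B$ the $2$-cell $\sigma^M_T\colon V^M_B\circ\Hc(T)\Rightarrow \Hc(T)\circ V^M_A$ is supplied by the $\Do$-module structure of $T$ evaluated on $M$, i.e. $T(M\otb -)\cong M\otb T(-)$; its compatibility with composition of $1$-cells, which is axiom \eqref{relativ-cent1}, then follows from the coherence of $\Do$-module functors.

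Next I construct $\Theta\colon\Zc(\Hc)\to\Zc_\ca(\Do)$ by evaluation at $\Do$. Put $M:=V_\Do(\uno)\in\Do$. Applying the naturality data $\sigma$ to the right-multiplication $\Do$-module functors $R_Y\colon \Do\to\Do$, $X\mapsto X\otb Y$ (for $Y\in\Do$) and evaluating at $\uno$ yields a natural isomorphism $V_\Do\cong M\otb(-)$; this is where the representability equivalence $\Fun_\ca(\ca,\Mo)\simeq\Mo$ enters conceptually. Transporting the $\ca$-module structure of $V_\Do$ across this isomorphism produces a natural family of isomorphisms $\gamma$ between $X\otb M$ and $M\otb X$ for $X\in\ca$, and the module-functor axiom \eqref{modfunctor1} for $V_\Do$ becomes precisely \eqref{bien}; hence $(M,\gamma)\in\Zc_\ca(\Do)$. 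On morphisms, $\Theta$ sends $f=\{f_A\}$ to $f_\Do$ evaluated at $\uno$, and \eqref{relativ-cent2} guarantees this is a morphism in $\Zc_\ca(\Do)$.

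Then I would prove that the two composites are naturally isomorphic to the identity and that $\Psi$ is monoidal. The isomorphism $\Theta\Psi\cong\Id$ is immediate from $M\otb\uno\cong M$. For $\Psi\Theta\cong\Id$ the point is that an arbitrary object $(V,\sigma)$ is reconstructed from $M=V_\Do(\uno)$: for every $\Do$-module $A$ and every $a\in A$ the functor $R_a\colon\Do\to A$, $X\mapsto X\otb a$, is a $\Do$-module functor, and $\sigma_{R_a}$ evaluated at $\uno$ gives an isomorphism $V_A(a)\cong M\otb a$ natural in $a$. For the monoidal structure, recall that the tensor product in $\Zc(\Hc)$ is composition, $(V\ot W)_A=V_A\circ W_A$ by \eqref{tensor-prod-relativ}; evaluating at $\Do$ gives $(V\ot W)_\Do(\uno)=V_\Do(W_\Do(\uno))\cong M_V\otb M_W$, while the formula $(\sigma\ot\tau)_X=(\sigma_X\circ\id)(\id\circ\tau_X)$ reproduces the composite half-braiding of $\Zc_\ca(\Do)$; the units correspond since the identity pseudonatural transformation has $V_A=I_A$, whence $M=\uno$ with trivial half-braiding.

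The hard part will be the coherent assembly in $\Psi\Theta\cong\Id$: showing that the pointwise isomorphisms $V_A\cong M\otb(-)$ obtained from the various $R_a$ are compatible with \emph{every} structure $2$-cell $\sigma_T$, and not merely with the $R_a$ themselves, so that they constitute an isomorphism of pseudonatural transformations. This is a naturality and coherence diagram chase driven by \eqref{relativ-cent1} and \eqref{relativ-cent2}, complicated by the associativity and module constraints that are suppressed throughout; these can be kept under control by first passing to strict models and replacing equivalence $1$-cells by isomorphisms via Proposition \ref{equivalence-1cells-iso}.
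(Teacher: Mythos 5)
Your proposal follows essentially the same route as the paper's proof: your $\Psi$ is the paper's functor $\Fc$ (left-tensoring by the central object, with structure $2$-cells given by the inverted $\Do$-module constraints $d^{-1}_{V,M}$ of each module functor), and your $\Theta$ is the paper's $\Gc$ (evaluation at the regular module $\Do$ at the unit $\uno$, extracting the half-braiding from the right-multiplication module functors $J_X$, which are your $R_Y$). The extra detail you supply -- reconstructing $(V,\sigma)$ from $V_\Do(\uno)$ via the functors $R_a$ and checking monoidality -- is exactly what the paper compresses into ``it follows straightforward that $\Fc$ and $\Gc$ are inverse of each other.''
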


\begin{proof} Let us define the functor  $\Fc:\Zc_\ca(\Do)\to \Zc(\Hc),$ as follows. For any $(V,\gamma)\in  \Zc_\ca(\Do)$ set
$\Fc(V,\gamma)=(W^V,\tau).$ Here, for each $\Mo\in {}_\Do\Mod$, $W^V_\Mo:\Mo\to \Mo$ is the $\ca$-module functor given by 
$$W^V_\Mo(M)=V\overline{\otimes} M.$$
The isomorphisms endowing the functor $W^V_\Mo$ structure of $\ca$-module functor are
$$c_{X,M}:W^V_\Mo(X\overline{\otimes}M)\to X\overline{\otimes}W^V_\Mo(M),$$ given by the following composition:
$$W^V_\Mo(X\overline{\otimes}M)=V\overline{\otimes}(X\overline{\otimes} M)\xrightarrow{m^{-1}_{V,X,M}} (V\otimes X)\overline{\otimes}M\xrightarrow{\gamma^{-1}_X\overline{\otimes}\id_M}(X\otimes V)\overline{\otimes}M$$
$$\xrightarrow{m_{X,V,M}} X\overline{\otimes}(V\overline{\otimes}M)=X\overline{\otimes}W^V_\Mo(M),$$
for any $X\in\ca, M\in \Mo$. It follows  that $(W^V_\Mo,c)$ is a $\ca$-module functor.

Now, we shall explain the definition of $\tau$. Take $\Mo,\No \in {}_{\Do}\Mod$, and $(G,d):\Mo\to \No$ a $\Do$-module functor. Define 
$$\tau_{(G,d)}: W^V_\No\circ G\to G\circ W^V_\Mo,$$
$$(\tau_{(G,d)})_M: V\overline{\otimes}G(M)\to G(V\overline{\otimes} M), (\tau_{(G,d)})_M=d^{-1}_{V,M},$$
for any $M\in\Mo$. Then, $\tau_{(G,d)}$ is a $\ca$-module natural isomorphism.

Now, we shall define the functor $\Fc$ on morphisms. Let $(V,\gamma)$, $(V',\gamma')$ be objects in $\Zc_\ca(\Do)$ and $f:(V,\gamma)\to (V',\gamma')$ be an arrow in $\Zc_\ca(\Do)$. Define
$\Fc(f):(W^V,\tau)\to (W^{V'},\tau'),$ as follows. For any  $\Do$-module  $\Mo$,  define the $\ca$-module natural transformation $$\Fc(f)_\Mo: W^V_\Mo\to W^{V'}_\Mo, \quad (\Fc(f)_\Mo)_M=f\overline{\otimes}\id_M,$$
for any $M\in\Mo$.
\bigbreak

Now, we shall define a functor $\Gc:\Zc(\Hc)\to \Zc_\ca(\Do),$ that will be the inverse of $\Fc$. Any object $X\in \ca$ induces a $\Do$-module functor $J_X:\Do\to\Do$, $J_X(V)=V\otimes X$. 
\medbreak

Let $(W,\tau)$ be an object in $\Zc(\Hc)$. For any $\Do$-module category $\Mo$, $W_\Mo:\Mo\to \Mo$ is a $\ca$-module functor. We shall denote it by $W_\Mo=(W_\Mo, c^\Mo)$. In particular,  $W_\Do(\uno)\in \Do$. We have natural $\ca$-module isomorphisms
$(\tau_{\Do,\Do})_{J_X}:W_\Do\circ J_X\xrightarrow{\backsimeq} J_X\circ W_\Do.$ In particular, we have isomorphisms 
$$((\tau_{\Do,\Do})_{J_X})_\uno:W_\Do(X)\xrightarrow{\backsimeq} W_\Do(\uno)\otimes X.$$
Using that $W_\Do$ has a $\ca$-module structure, there is a natural isomorphism $$c^\Do_{X,\uno}:X\otimes W_\Do(\uno)\to W_\Do(X).$$ Let $\gamma$ be the natural isomorphism defined as
$$\gamma_X: X\ot W_\Do(\uno)\to W_\Do(\uno)\ot X, \quad\gamma_X=((\tau_{\Do,\Do})_{J_X})_\uno\circ c_{X,\uno}.$$ 
The natural transformation $\gamma$ satisfies \ref{bien} since $(\tau_{\Do,\Do})_{J_X}$ is a $\ca$-module natural transformation.
Then $(W_\Do(\uno),\gamma)\in \Zc_\ca(\Do).$ Whence, we define $\Gc(W,\tau)=(W_\Do(\uno),\gamma)$.

Let $f:(W,\tau)\to (W',\tau')$ be a morphism in $\Zc(\Hc)$, then $(f_\Do)_\uno$ is a morphism in $\Zc_\ca(\Do)$ since $f_\Do$ is a $\ca$-module natural transformation. Set
$\Gc(f)=(f_\Do)_\uno.$ It follows straightforward that $\Gc$ is well-defined and that $\Fc$ and $\Gc$ are inverse of each other.
\end{proof}

The center of the 2-category of representations of a tensor category $\ca$ coincides with the Drinfeld center of $\ca$.
\begin{cor}\label{center-modcat}
$\Zc({}_\ca \Mod)\backsimeq \Zc(\ca).$
\end{cor}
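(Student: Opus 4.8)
The plan is to derive the statement as a direct specialization of Proposition \ref{cent-forget-relative-center}, taking the grading group $G$ to be trivial. Concretely, I would let $G=\{1\}$, so that the $G$-graded tensor category $\Do=\oplus_{g\in G}\ca_g$ reduces to $\Do=\ca_1=\ca$ with the trivial grading. In this case the inclusion $\ca\hookrightarrow\Do$ is the identity functor, and therefore the induced forgetful pseudofunctor $\Hc:{}_\Do\Mod\to{}_\ca\Mod$ is precisely the identity pseudofunctor $\Id_{{}_\ca\Mod}$. By the definition of the center of a 2-category recalled in Subsection \ref{relative-center}, we then have the identification $\Zc(\Hc)=\Zc(\Id_{{}_\ca\Mod})=\Zc({}_\ca\Mod)$, which supplies the left-hand side of the desired equivalence.

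On the other side of Proposition \ref{cent-forget-relative-center}, the relative center $\Zc_\ca(\Do)$ becomes $\Zc_\ca(\ca)$, the relative center of $\ca$ viewed as a bimodule category over itself with its regular bimodule structure. The next step is to identify $\Zc_\ca(\ca)$ with the Drinfeld center $\Zc(\ca)$. By definition an object of $\Zc_\ca(\ca)$ is a pair $(M,\gamma)$ with $M\in\ca$ and a natural family of isomorphisms $\gamma_X\colon X\otb M\to M\otb X$. For the regular bimodule structure the left and right actions $\otb$ are just the tensor product $\otimes$, and the bimodule associativity constraints $\alpha$ coincide with the associativity constraints of the tensor category $\ca$; consequently the compatibility condition \eqref{bien} reduces exactly to the hexagon axiom defining a half-braiding. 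A morphism in $\Zc_\ca(\ca)$ is likewise precisely a morphism in $\Zc(\ca)$. Hence $\Zc_\ca(\ca)$ and $\Zc(\ca)$ agree as categories.

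Composing the monoidal equivalence of Proposition \ref{cent-forget-relative-center} with this identification then yields the sought monoidal equivalence $\Zc({}_\ca\Mod)\backsimeq\Zc(\ca)$. The only point requiring genuine care is that the monoidal structure transported from $\Zc(\Hc)$ through Proposition \ref{cent-forget-relative-center} matches the usual tensor product of the Drinfeld center under the identification $\Zc_\ca(\ca)=\Zc(\ca)$; since in both cases the tensor product is built from $\otimes$ on underlying objects together with the evident composite of half-braidings (as in \eqref{bien}), this comparison is essentially formal. I therefore expect no serious obstacle: the entire conceptual content of the corollary is carried by Proposition \ref{cent-forget-relative-center}, and the present statement is the degenerate case in which the graded extension is trivial and the forgetful pseudofunctor is the identity.
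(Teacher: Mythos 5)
Your proposal is correct and is exactly the paper's own argument: the paper's proof consists of taking $\Do=\ca$ with the trivial grading, so that the forgetful pseudofunctor $\Hc:{}_\ca\Mod\to{}_\ca\Mod$ is the identity, and then invoking Proposition \ref{cent-forget-relative-center}. Your additional verification that $\Zc_\ca(\ca)$ with the regular bimodule structure coincides with the Drinfeld center $\Zc(\ca)$ is a detail the paper leaves implicit, but it is the same route.
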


\begin{proof}
Take $\Do=\ca$ and $\Hc:{}_\ca \Mod\to {}_\ca \Mod$ the identity pseudofunctor. 
\end{proof}

\subsection{Group actions coming from graded tensor categories}

Throughout this section $G$ will denote a finite group. 
Assume that $\ca$ is a finite tensor category and  $\Do=\oplus_{g\in G} \Do_g$ is a $G$-graded extension of $\ca$. 
 Set $\Do_1=\ca$. We shall further assume that $\Do$ is a strict monoidal category.

In this section we aim to prove the following result.

\begin{teo}\label{g-act-graded-tc} There is an action of $G$ on the 2-category 
${}_{\ca}\Mod^{\opp}$. Moreover, there are 2-equivalences
$$ ({}_{\ca}\Mod^{\opp})^G \simeq {}_{\Do}\Mod, \quad ({}_{\ca}\Mod^{\opp}_e)^G \simeq {}_{\Do}\Mod_e .$$
\end{teo}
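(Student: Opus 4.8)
The plan is to build the $G$-action out of the invertible $\ca$-bimodule categories $\Do_g$ and then identify equivariant $0$-cells with $\Do$-module structures. Since $\Do=\oplus_g\Do_g$ is a faithful graded extension, each $\Do_g$ is an invertible exact $\ca$-bimodule category, $\Do_1=\ca$, and the tensor product of $\Do$ induces equivalences of $\ca$-bimodule categories $\Do_g\boxtimes_\ca\Do_h\xrightarrow{\sim}\Do_{gh}$. First I would define, for each $g\in G$, the $2$-functor $F_g\colon{}_\ca\Mod^{\opp}\to{}_\ca\Mod^{\opp}$ by $F_g(\Mo)=\Do_g\boxtimes_\ca\Mo$ on $0$-cells and by whiskering on $1$- and $2$-cells; by the equivalence $\Phi$ of Subsection~\ref{graded-tc} this is equivalently $\Fun_\ca(\overline{\Do_g},-)$. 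The equivalences $\Do_g\boxtimes_\ca\Do_h\simeq\Do_{gh}$ give the pseudonatural equivalences $\chi_{g,h}\colon F_g\circ F_h\to F_{gh}$, and the associativity constraint of $\boxtimes_\ca$ supplies the modifications $\omega_{g,h,f}$, whose pentagon is exactly axiom~\eqref{omega-axiom}. Appealing to Corollary~\ref{action with 2-functors} and Theorem~\ref{coherence-groupact} I would then replace this action by a $G$-biequivalent strict one, so that in the rest of the argument all $F_g$ are $2$-functors and $\chi,\omega$ are identities, removing the associativity bookkeeping.

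The key point on variance is that a $\Do$-module category $\Mo$ is in particular a $\ca$-module category by restriction along $\ca\hookrightarrow\Do$, and its action restricts to functors $\Do_g\boxtimes_\ca\Mo\to\Mo$. Read in ${}_\ca\Mod^{\opp}$ these become $1$-cells $U_g\colon\Mo\to F_g(\Mo)$, and since $\Do_g$ is invertible each $U_g$ is an equivalence; this is precisely why the statement is phrased for the opposite $2$-category. This gives the candidate biequivalence $\Psi\colon{}_\Do\Mod\to({}_\ca\Mod^{\opp})^G$: send a $\Do$-module $\Mo$ to the triple $(\Mo,\{U_g\},\{\Pi_{g,h}\})$, where $U_g$ is as above and $\Pi_{g,h}$ comes from the module associativity constraint $m_{\Do_g,\Do_h,\Mo}$ together with $\Do_g\boxtimes_\ca\Do_h\simeq\Do_{gh}$. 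The module pentagon \eqref{left-modulecat1} and unit axiom \eqref{left-modulecat2} then translate into the cocycle and normalization conditions \eqref{a-pi-eq} defining an equivariant $0$-cell. On $1$-cells a $\Do$-module functor $(\theta,c)$ restricts to a $\ca$-module functor $\theta$ together with $2$-cells $\theta_g$ built from the structural isomorphisms $c_{\Do_g,-}$, and the module-functor axiom \eqref{modfunctor1} becomes \eqref{equiva-1cell}; a $\Do$-module transformation is sent to itself, with \eqref{modfunctor3} giving \eqref{equiva-2cell}.

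To see that $\Psi$ is a biequivalence I would construct the inverse on $0$-cells directly: given an equivariant object $(\Mo,\{U_g\},\{\Pi_{g,h}\})$, define a $\Do$-action by letting the graded piece $\Do_g$ act through the equivalence $\Do_g\boxtimes_\ca\Mo=F_g(\Mo)\to\Mo$ underlying $U_g$ (read back in ${}_\ca\Mod$), and assembling these over $g\in G$ into an action of $\Do=\oplus_g\Do_g$; the $\Pi_{g,h}$ provide the associativity constraint, whose pentagon is \eqref{a-pi-eq}. One then checks that $\Psi$ is essentially surjective on $0$-cells and locally an equivalence (full, faithful and essentially surjective on equivariant $1$- and $2$-cells) by matching, dictionary-style, the data above; local fullness and faithfulness reduce to the fact that a $\ca$-module transformation commuting with all the $\theta_g$ is the same thing as a $\Do$-module transformation. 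For the exact variant, the functors $F_g=\Do_g\boxtimes_\ca-$ preserve exact module categories because $\Do_g$ is exact, and a $\Do$-module is exact if and only if its restriction to the finite subcategory $\ca$ is, so $\Psi$ restricts to a biequivalence $({}_\ca\Mod^{\opp}_e)^G\simeq{}_\Do\Mod_e$.

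The hard part will be the coherence matching in the second paragraph: verifying that the module-category axioms \eqref{left-modulecat1}, \eqref{modfunctor1}, \eqref{modfunctor3} correspond term by term to the equivariance conditions \eqref{a-pi-eq}, \eqref{equiva-1cell}, \eqref{equiva-2cell} as unwound in Section~\ref{Section:equivariant}. Strictifying the action beforehand is what makes this feasible, since it collapses the $\chi$- and $\omega$-decorated diagrams and leaves only the interaction between the associativity of the $\Do$-action and the equivalences $\Do_g\boxtimes_\ca\Do_h\simeq\Do_{gh}$ to be tracked.
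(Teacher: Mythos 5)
Your proposal follows, in substance, the same route as the paper: the action of $G$ on ${}_{\ca}\Mod^{\opp}$ is built from the invertible $\ca$-bimodule categories $\Do_g$, and the biequivalence is the restriction/action dictionary identifying $\Do$-module data with equivariant data. The paper realizes $F_g$ as $\Fun_\ca(\overline{\ca_g},-)$, which by \cite[Thm.\ 3.20]{Gr} is your $\Do_g\boxtimes_\ca(-)$, and it constructs the functor in the opposite direction to yours, namely $({}_{\ca}\Mod^{\opp})^G\to{}_{\Do}\Mod$, sending $(\Mo,U,\Pi)$ to $\Mo$ with $X\otb M:=U_g(G_{X,M})$ and associativity built from $\Pi_{g,h}$; your $\Psi$ is its quasi-inverse, so this difference is immaterial. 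Your treatment of the exact case is, if anything, more detailed than the paper's one-line remark.

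The one step that does not work as written is the strictification detour. Theorem \ref{coherence-groupact} replaces ${}_{\ca}\Mod^{\opp}$ by the 2-category $\bb[G]$, whose 0-cells are triples $(A,\theta,\alpha)$ of $G$-indexed families of objects and comparison data --- they are no longer single $\ca$-module categories --- so after strictifying you cannot "read a $\Do$-module structure as 1-cells $U_g\colon\Mo\to F_g(\Mo)$" and run the dictionary; transporting the dictionary through the $G$-biequivalence reintroduces exactly the coherence bookkeeping you hoped to suppress. The fix is the one the paper adopts (and which you mention only in passing): take $F_g=\Fun_\ca(\overline{\ca_g},-)$ from the start. Since $\Do$ is assumed strict, the equivalences $(\chi^0_{g,h})_{\Mo}(H)(X)(Y)=H(X\ot Y)$ satisfy $\chi_{gh,f}\circ(\chi_{g,h}\ot\idn_{F_f})=\chi_{g,hf}\circ(\idn_{F_g}\ot\chi_{h,f})$ \emph{on the nose}, so one may take $\omega_{g,h,f}=\id$ and the $F_g$ are honest 2-functors, with no appeal to any coherence theorem; the equivariance axioms \eqref{a-pi-eq}, \eqref{equiva-1cell}, \eqref{equiva-2cell} then match the module axioms \eqref{left-modulecat1}, \eqref{modfunctor1}, \eqref{modfunctor3} exactly as you describe. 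Alternatively, keep the model $\Do_g\boxtimes_\ca(-)$ and carry the associators of $\boxtimes_\ca$ through the computation: the $\omega$-terms in \eqref{a-pi-eq} are precisely designed to receive them. Either repair leaves the rest of your argument intact.
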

\pf First, let us define an action of $G$ on the 2-category $\Bc={}_{\ca}\Mod^{\opp}$. For any $g\in G$ define the 2-functors $F_g:\Bc\to \Bc$ as follows. For any left $\ca$-module category $\Mo$, set
$F_g(\Mo)=\Fun_{\ca}(\bca_g,\Mo)$. If $\Mo, \No$ are left
$\ca$-module categories, and 
$G:\Mo\to \No$ 
is a $\ca$-module functor, then 
$$F_g(G): \Fun_{\ca}(\bca_g,\Mo) \to \Fun_{\ca}(\bca_g,\No), \quad F_g(G)(H)=G\circ H.$$
Now, we shall define the pseudonatural equivalences 
$\chi_{g,h}: F_g\circ F_h\to F_{gh}$, for any $g, h\in G$. For any left $\ca$-module category $\Mo$
$$( \chi^0_{g,h})_{\Mo}: \Fun_{\ca}(\bca_{gh},\Mo) \to \Fun_{\ca}(\bca_g,\Fun_{\ca}(\bca_h,\Mo) ),$$
$$( \chi^0_{g,h})_{\Mo}(H)(X)(Y)=H(X\ot Y), $$
for any $H\in \Fun_{\ca}(\bca_{gh},\Mo)$, $X\in \ca_g, Y\in \ca_h$. It follows that $( \chi^0_{g,h})_{\Mo}$ is a well-defined $\ca$-module functor. For any $\ca$-module functor $G:\Mo\to \No$ we have that $F_g(F_h(G)\circ ( \chi^0_{g,h})_{\Mo}=( \chi^0_{g,h})_{\No}\circ F_{gh}(G)$, whence, we can define 
$$( \chi_{g,h})_{G}: F_g(F_h(G)\circ ( \chi^0_{g,h})_{\Mo}\to ( \chi^0_{g,h})_{\No}\circ F_{gh}(G)$$
to be the identities. Since $\chi_{gh,f} \circ (\chi_{g,h} \ot  \idn_{F_f }) = \chi_{g,hf}\circ (\idn_{F_g} \ot\chi_{h,f})$, for any $f,g,h\in G$, then we can choose
$\omega_{g,h,f}$ to be the identities.

Now, we shall define a biequivalence $\Phi: \Bc^G\to {}_{\Do}\Mod$. Assume $(\Mo, U, \Pi)$ is an equivariant 0-cell. This means that we have $\ca$-module functors
$$U_g:\Fun_{\ca}(\bca_g,\Mo)\to \Mo,$$
together with $\ca$-module natural isomorphisms
$$\Pi_{g,h}: U_g\circ F_g(U_h)\circ (\chi^0_{g,h})_{\Mo} \to U_{gh},$$
satisfying the required axioms. Recall the definition of the functors $G_{X,M}$ given in Section \ref{graded-tc}. 
\begin{claim} Let be $g, h\in G$. If $X\in\ca_g, Y\in \ca_h$, then, there exists a family of $\ca$-module natural isomorphisms 
$$\beta_{X,Y,M}:  F_g(U_h)\big((\chi^0_{g,h})_{\Mo}(G_{X\ot Y,M})\big)\to G_{X, U_h(G_{Y,M})} .$$
\end{claim}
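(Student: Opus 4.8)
The plan is to evaluate both $\ca$-module functors $\bca_g\to\Mo$ appearing in the claim on a test object $Z\in\bca_g$, unwind the definitions of $\chi^0_{g,h}$, of $F_g(U_h)$ and of $G_{-,-}$, and exhibit $\beta_{X,Y,M}$ as the module-functor constraint of $U_h$.

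First I would unpack the left-hand side. By the definition of $\chi^0_{g,h}$ and of $G_{X\ot Y,M}:\bca_{gh}\to\Mo$, for $Z\in\Do_g$ the object $(\chi^0_{g,h})_\Mo(G_{X\ot Y,M})(Z)$ is the functor $\Psi_Z\in F_h(\Mo)=\Fun_\ca(\bca_h,\Mo)$ given on $W\in\Do_h$ by $\Psi_Z(W)=G_{X\ot Y,M}(Z\ot W)=({}^*(Z\ot W)\ot(X\ot Y))\otb M$. Via the canonical isomorphism ${}^*(Z\ot W)\cong{}^*W\ot{}^*Z$, and setting $A:={}^*Z\ot X$, which lies in $\Do_{g^{-1}}\ot\Do_g=\ca$, this becomes $\Psi_Z(W)\cong({}^*W\ot A\ot Y)\otb M=G_{A\ot Y,M}(W)$; that is, $\Psi_Z\cong G_{(\,{}^*Z\ot X)\ot Y,\,M}$. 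Since $F_g(U_h)$ is post-composition with $U_h$, the left-hand side evaluated at $Z$ is therefore $U_h\big(G_{(\,{}^*Z\ot X)\ot Y,M}\big)$.

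Next I would invoke the $\ca$-module structure on $F_h(\Mo)$. By the equivalence $\Phi:\Do_h\boxtimes_\ca\Mo\xrightarrow{\ \sim\ } F_h(\Mo)$, $\Phi(Y\boxtimes M)=G_{Y,M}$, recalled just before the claim, the left $\ca$-action on $F_h(\Mo)$ transports from the residual left action on the bimodule factor $\Do_h$, so $A\cdot G_{Y,M}=G_{A\ot Y,M}$ for $A\in\ca$. Hence $G_{(\,{}^*Z\ot X)\ot Y,M}=({}^*Z\ot X)\cdot G_{Y,M}$ in $F_h(\Mo)$, and since $U_h$ is an $\ca$-module functor its constraint supplies an isomorphism
$$U_h\big(({}^*Z\ot X)\cdot G_{Y,M}\big)\ \xrightarrow{\ \sim\ }\ ({}^*Z\ot X)\otb U_h(G_{Y,M})=G_{X,\,U_h(G_{Y,M})}(Z),$$
the last equality being the definition of $G_{X,-}$. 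Declaring $(\beta_{X,Y,M})_Z$ to be this constraint of $U_h$ yields the claimed isomorphism pointwise.

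Finally I would check that $\beta_{X,Y,M}$ lives in the correct category: naturality in $Z\in\bca_g$ and the $\ca$-module condition both follow from naturality of the constraint of $U_h$ in its $\ca$-variable together with the identification $\Psi_Z\cong({}^*Z\ot X)\cdot G_{Y,M}$, while naturality in $M$ (and in $X,Y$) is inherited from functoriality of $G_{-,-}$ and of $U_h$. The main obstacle is bookkeeping rather than conceptual: one must pin down the left $\ca$-module structure on $F_h(\Mo)=\Fun_\ca(\bca_h,\Mo)$ \emph{precisely} as the one transported along $\Phi$ (equivalently, the residual left action of the $\ca$-bimodule $\bca_h$), and then verify that, under this identification, the module-functor axioms \eqref{modfunctor1}--\eqref{modfunctor2} for $U_h$ make $\beta$ genuinely $\ca$-linear and compatible with the rigidity isomorphisms ${}^*(Z\ot W)\cong{}^*W\ot{}^*Z$ and the module associativity of $\Mo$ used above.
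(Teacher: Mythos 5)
Your proposal is correct and follows essentially the same route as the paper's proof: evaluate both $\ca$-module functors at a test object $Z\in\bca_g$, identify $G_{X\ot Y,M}(Z\ot -)\simeq ({}^*Z\ot X)\otb G_{Y,M}$ (the paper does this by combining the two module isomorphisms $G_{X,M}(Z\ot -)\simeq {}^*Z\otb G_{X,M}$ and $X\otb G_{Y,M}\simeq G_{X\ot Y,M}$, while you unwind the rigidity isomorphism ${}^*(Z\ot W)\cong {}^*W\ot {}^*Z$ directly), and then obtain $\beta_{X,Y,M}$ from the $\ca$-module functor constraint of $U_h$. The remaining differences are purely expository bookkeeping about the transported $\ca$-action on $\Fun_{\ca}(\bca_h,\Mo)$, which the paper glosses over at the same level of detail.
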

\pf[Proof of Claim] If $Z\in\ca_g$, then
$$G_{X, U_h(G_{Y,M})}(Z)=({}^*Z\ot X)\otb U_h(G_{Y,M}), $$
$$F_g(U_h)\big((\chi^0_{g,h})_{\Mo}(G_{X\ot Y,M})\big)(Z)=U_h(G_{X\ot Y,M}(Z\ot -)). $$
Note that there are module natural isomorphisms
$$G_{X,M}(Z\ot -)\simeq {}^*Z\otb G_{X,M}, \quad X\otb G_{Y,M}\simeq G_{X\ot Y,M}.$$
Combining these two isomorphisms we get that
$$G_{X\ot Y,M}(Z\ot -) \simeq ({}^*Z\ot X)\otb G_{Y,M}.$$
Using this isomorphism and the fact that $U_h$ is a $\ca$-module functor, we get that
$$U_h(G_{X\ot Y,M}(Z\ot -))\simeq ({}^*Z\ot X)\otb U_h(G_{Y,M}),$$
obtaining the desired isomorphisms.
\epf
We define $\Phi(\Mo, U, \Pi)=\Mo$ as Abelian categories. We must endowed the category $\Mo$ with a structure of $\Do$-module category. If $X\in \ca_g$, $M\in \Mo$ set
$$X\otb M= U_g( G_{X,M}).$$
We have to define associativity isomorphisms
$$ m_{X,Y,M}:(X\ot Y)\otb M\to X\otb  (Y \otb M).$$
Suppouse that $X\in \ca_g, Y\in\ca_h$, $M\in \Mo$. Then
$$(X\ot Y)\otb M = U_{gh}( G_{X\ot Y,M}), \quad X\otb  (Y \otb M)=U_g(G_{X, U_h(G_{Y,M})}).$$
Hence, we define
$$ m_{X,Y,M}= U_g(\beta_{X,Y,M}) (\Pi_{g,h})^{-1}_{G_{X\ot Y,M}}.$$
Axiom \eqref{left-modulecat1} is equivalent, in this case, to axiom \eqref{a-pi-eq}. It is clear that $\Phi$ is a biequivalence and restricted to the category of exact modules $({}_{\ca}\Mod^{\opp}_e)$ gives the second biequivalence.
\epf 

\section{Braided $G$-crossed tensor categories from $G$ actions on 2-categories}\label{braided-Gcrossed}

In this section actions of groups on 2-categories are assumed to be strict. This does not lead to any loss of
generality, since, in view of Theorem \ref{coherence-groupact}, all definitions and statements remain valid for non-strict actions after insertion of the suitable  isomorphisms.

\subsection{Strict braided $G$-crossed tensor categories}

Braided $G$-crossed fusion categories play the same role in homotopy quantum field theory that braided fusion categories in the topological quantum field theory, see \cite{T1,T2,T3}. 

\begin{defi}
Let $G$ be a groups and $\Cc$ a strict monoidal category. A \textit{strict} braided $G$-crossed structure on $\Cc$ consist of the following data:

\begin{enumerate}
 \item a decomposition $\Cc=\coprod_{g\in G} \Cc_g$  (coproduct of categories) such that
 \begin{itemize}
 \item $\mathbf{1} \in \Cc_e$,
 \item $\Cc_g\otimes \Cc_h \subset \Cc_{gh}$ for all $g,h\in G$,
 \end{itemize}
 \item a $G$-indexed family of strict monoidal functor $g_*:\Cc\to \Cc$, such that
 \begin{itemize}
 \item $g_*(\Cc_h)\subset \Cc_{ghg^{-1}}$,\quad$g_*h_*=(gh)_*$,\quad $e_*=\operatorname{Id}_\Cc$,
 \end{itemize}
\item  a family of  natural isomorphisms

\[
\xymatrixrowsep{0.6in}
\xymatrixcolsep{0.6in}
\xymatrix{
\Cc\times \Cc_g  \ar[rr]^{ g_*\times \operatorname{Id}_{\Cc}}&& \Cc\times \Cc_g \ar[d]^{\otimes}\\ \ar[u]^{\operatorname{flip}}
\Cc_{g}\times \Cc \ar[rr]_{\otimes} && \Cc  {\ar @{} [llu] |{\Downarrow c}}
}
\]

such that
 \begin{itemize}
    \item $g_*(c_{X,Z})=c_{g_*(X),g_*(Z)}$ 
    \item $c_{X,Y\ot Z}=(\id_Y\ot c_{X,Z})\circ (c_{X,Y}\ot \id_{Z})$
    \item $c_{X\ot Y,Z}=(c_{X,h_*(Z)}\ot \id_{Y}) \circ (\id_X\ot c_{Y,Z})$
\end{itemize}
\end{enumerate}
for all $X\in \Cc$, $Y\in \Cc_g, Z\in \Cc_h,$ $g,h\in G$.
\end{defi}

Even when the  definition of strict braided $G$-crossed monoidal category is too restrictive, every \emph{weak} braided $G$-crossed category is equivalent to a \emph{strict} braided $G$-crossed category, see \cite{Ga2}.

\subsection{Center of a $G$-action}
Let $G$ be a group acting strictly on a 2-category $\bb$, where $F_g:\bb\to \bb$, denotes the associated 2-functors. We shall introduce a $G$-graded monoidal category equipped with an action of $G$.
\subsubsection{The $G$-graded monoidal category $\mathcal{Z}_G(\bb)$}\label{definition of  Z_G}
Define the strict monoidal category $\mathcal{Z}_G(B)=\coprod_{g\in G}\mathcal{Z}_G(B)_g,$ where $\mathcal{Z}_G(B)_g= \operatorname{Pseu-Nat}(\operatorname{Id}_{\bb},F_g)$ and  the product induced by the tensor product of pseudonatural transformation defined in \eqref{tensorp-psnat}.  In other words, if $X\in \mathcal{Z}_G(B)_g$ and $ Y\in \mathcal{Z}_G(B)_h$, we define $X\otimes Y\in \mathcal{Z}_G(B)_{gh}= \operatorname{Pseu-Nat}(\operatorname{Id}_{\bb},F_{gh})$ as folows: for any object $A\in \bb$, $(X\otimes B)_A=X_{F_h(A)}\circ Y_A$  and for any 1-cell $W\in \bb(A,B)$ 

\[
\xymatrixrowsep{0.6in}
\xymatrixcolsep{0.6in}
\xymatrix{
A \ar@/_3pc/[dd]_{(X\otimes Y)_A}  \ar[d]^{Y_A} \ar[rr]^{W} && B \ar[d]^{Y_B} \ar@/^3pc/[dd]^{(X\otimes Y)_B}\\ 
 F_h(A) \ar[rr]^{F_h(W)} \ar[d]^{X_{F_h(A)}}  &&   F_h(B) \ar[d]^{F_h(B)}{\ar @{} [llu] |{\Downarrow Y_W}} \\
 F_{gh}(A) \ar[rr]^{X_{F_h(W)}}  && F_{gh}(B) {\ar @{} [llu] |{\Downarrow X_{F_h(X)}}}
}
\] 
The unit object is $1_{\operatorname{Id}_{\bb}}\in \operatorname{Pseu-Nat}(\operatorname{Id}_{\bb},\operatorname{Id}_{\bb})$.

\subsubsection{The action of $G$ on $\mathcal{Z}_G(\bb)$}\label{definition of action on Z_G}
Given $X\in \mathcal{Z}_G(B)_h$ and $g\in G$, we define $g_*(X)\in \mathcal{Z}_G(B)_{ghg^{-1}}$ as follows: for objects $A\in \bb$, $g_*(X)_A=F_g(X_{F_{g^{-1}}(A) } )$ and  for any 1-arrow $W:A\to B$

\[
\xymatrixrowsep{0.6in}
\xymatrixcolsep{0.6in}
\xymatrix{
A \  \ar[d]_{F_g(X_{F_{g^{-1}}(A) } ) } \ar[rr]^{W} && B \ar[d]^{F_g(X_{F_{g^{-1}}(B) } )} \\ 
 F_{ghg^{-1}}(A) \ar[rr]^{F_{ghg^{-1}}(W)}   &&   F_{ghg^{-1}}(B). {\ar @{} [llu] |{\Downarrow F_g(X_{F_{g^{-1}}(W) } )}}
}
\]Analogously, the functor $g_*$ is defined for morphism in $\mathcal{Z}_G(B)$. 

\subsubsection{The $G$-braiding  of $\mathcal{Z}_G(\bb)$}\label{definition of G-braiding}
Let $X\in \mathcal{Z}_G(B)_g$ and $ Y\in \mathcal{Z}_G(B)_h$. By the definition of pseudo-natural transformation we have 
\[
\xymatrixrowsep{0.6in}
\xymatrixcolsep{0.6in}
\xymatrix{
A \  \ar[d]_{X_A} \ar[rr]^{Y_A} && F_h(A) \ar[d]^{X_{F_h(A)}} \\ 
 F_{g}(A) \ar[rr]^{F_{g}(Y_A)}   &&   F_{gh}(A), {\ar @{} [llu] |{\Downarrow X_{Y_A}  }}
}
\]but $(X\otimes Y)_A= X_{F_h(A)}\circ Y_A$ and $(g_*(Y)\otimes X)_A= F_g(Y_A)\circ X_A$, then the $X_{Y_A}$ define natural isomorphism $c_{X,Y}:= X_{Y_A}: X\otimes Y\to g_*(Y)\otimes X$.

\begin{teo}
Let $G$ be a groups with a strcit action on a 2-categoy $\bb$. Then the monoidal category $\mathcal{Z}_G(\bb)$ defined in \ref{definition of Z_G} is a strict braided  $G$-crossed monoidal category with action defined in \ref{definition of action on Z_G} and $G$-braiding defined in \ref{definition of G-braiding}. Moreover, the braided category $\mathcal{Z}_G(\bb)_e$ is exactly the Drinfeld center of $\bb$.
\end{teo}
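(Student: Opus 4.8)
The plan is to reduce every axiom to the three defining properties of a pseudonatural transformation --- naturality of its structure $2$-cells in the $1$-cell argument, compatibility with horizontal composition, and normalization at identities --- combined with the strictness of the action, namely $F_gF_h=F_{gh}$ and the fact that each $F_g$ is a $2$-functor and hence preserves horizontal composition of $1$- and $2$-cells on the nose (strictness is the standing assumption of Section \ref{braided-Gcrossed}, legitimate by Theorem \ref{coherence-groupact}). First I would record that, under this strictness, the associativity constraint $a_{\alpha,\beta,\gamma}$ of the tensor product of pseudonatural transformations --- which was built out of the pseudofunctor constraints $G_2$ --- is the identity, since those constraints vanish for $2$-functors. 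Hence the product $\ot$ of \ref{definition of Z_G} is strictly associative, $1_{\operatorname{Id}_\bb}$ is a strict unit (its components are identity $1$-cells, which are strict units for horizontal composition in a $2$-category), and $\mathcal{Z}_G(\bb)=\coprod_{g}\operatorname{Pseu-Nat}(\operatorname{Id}_\bb,F_g)$ is a strict monoidal category with $\mathbf{1}\in\mathcal{Z}_G(\bb)_e$ and $\mathcal{Z}_G(\bb)_g\ot\mathcal{Z}_G(\bb)_h\subseteq\mathcal{Z}_G(\bb)_{gh}$ by construction.

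Next I would verify that each $g_*$ of \ref{definition of action on Z_G} is a strict monoidal endofunctor with $g_*h_*=(gh)_*$, $e_*=\operatorname{Id}$, and $g_*(\mathcal{Z}_G(\bb)_h)\subseteq\mathcal{Z}_G(\bb)_{ghg^{-1}}$. Each identity is a direct computation on components: for instance $g_*(h_*(X))_A=F_g\big((h_*X)_{F_{g^{-1}}(A)}\big)=F_{gh}\big(X_{F_{(gh)^{-1}}(A)}\big)=(gh)_*(X)_A$ using $F_gF_h=F_{gh}$, while $e_*=\operatorname{Id}$ follows from $F_e=\operatorname{Id}_\bb$. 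Strict monoidality $g_*(X\ot Y)=g_*(X)\ot g_*(Y)$ holds because $F_g$, being a $2$-functor, distributes over the horizontal composition defining $(X\ot Y)_A$, and the degree count is exactly the one performed in \ref{definition of action on Z_G}.

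The heart of the proof is the braiding. For $X\in\mathcal{Z}_G(\bb)_g$, $Y\in\mathcal{Z}_G(\bb)_h$ I would first confirm that $c_{X,Y}=\{X_{Y_A}\}_A$ really is a morphism (modification) $X\ot Y\Rightarrow g_*(Y)\ot X$ in $\mathcal{Z}_G(\bb)$; this modification axiom, as well as naturality of $c$ in both arguments, follows from the naturality of the structure cell $X_{(-)}$ evaluated at the pseudonaturality square $Y_W$. The equivariance $g_*(c_{X,Z})=c_{g_*(X),g_*(Z)}$ is then immediate, as both sides equal $F_g(X_{Z_\bullet})$ read off at the appropriate object and a $2$-functor commutes with the formation of the pseudonaturality cell. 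For the two hexagons, writing the composite $1$-cell $(Y\ot Z)_A=Y_{F_k(A)}\circ Z_A$ for $Z\in\mathcal{Z}_G(\bb)_k$ and applying the horizontal-composition axiom of $X$ gives
$$X_{Y_{F_k(A)}\circ Z_A}=\big(F_g(Y_{F_k(A)})\circ X_{Z_A}\big)\big(X_{Y_{F_k(A)}}\circ Z_A\big),$$
and since $X_{Z_A}=(c_{X,Z})_A$ and $X_{Y_{F_k(A)}}=(c_{X,Y})_{F_k(A)}$ this is precisely the first hexagon $c_{X,Y\ot Z}=(\id\ot c_{X,Z})\circ(c_{X,Y}\ot\id_Z)$. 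Dually, expanding $(X\ot Y)_{Z_A}$ through the formula \eqref{tensorp-psnat} for the structure cells of a tensor product yields $(X_{F_h(Z_A)}\circ\id)(\id\circ Y_{Z_A})$ with $X\in\mathcal{Z}_G(\bb)_h$; here $F_h(Z_A)=h_*(Z)_{F_h(A)}$, so $X_{F_h(Z_A)}=(c_{X,h_*(Z)})_{F_h(A)}$ and $Y_{Z_A}=(c_{Y,Z})_A$, giving the second hexagon, the twist on $Z$ appearing exactly because the outer factor meets $Z_A$ only after it has been pushed through $F_h$.

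Finally, since $F_e=\operatorname{Id}_\bb$ and $e_*=\operatorname{Id}$, we have $\mathcal{Z}_G(\bb)_e=\operatorname{Pseu-Nat}(\operatorname{Id}_\bb,\operatorname{Id}_\bb)=\Zc(\bb)$ as monoidal categories in the sense of Subsection \ref{relative-center}, and the braiding $c_{X,Y}=X_{Y_A}\colon X\ot Y\to Y\ot X$ is the usual braiding of the center; thus $\mathcal{Z}_G(\bb)_e$ is the Drinfeld center of $\bb$ as a braided category. I expect the two hexagon identities to be the main obstacle: although each is ultimately a diagram chase, one must match the horizontal-composition axiom for pseudonatural transformations against the explicit cell formula \eqref{tensorp-psnat}, keeping careful track of the strict identifications $F_gF_h=F_{gh}$ and of where the $g_*$-twists are inserted, since a misplaced twist is exactly the kind of error that the bookkeeping is designed to catch.
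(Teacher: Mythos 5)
Your proposal is correct and takes essentially the same approach as the paper: the paper's entire proof is the one-line assertion that, since the action is strict, the three braided $G$-crossed axioms hold by definition, and your argument simply carries out that verification explicitly (the hexagons from the composition axiom for pseudonatural transformations and from the formula \eqref{tensorp-psnat} for the structure cells of a tensor product, equivariance and strict monoidality from $2$-functoriality and $F_gF_h=F_{gh}$, and $\mathcal{Z}_G(\bb)_e=\operatorname{Pseu-Nat}(\operatorname{Id}_{\bb},\operatorname{Id}_{\bb})=\Zc(\bb)$ for the last claim). The only blemish is a letter slip in your second hexagon: in $(X\ot Y)_{Z_A}=(X_{F_h(Z_A)}\circ\id)(\id\circ Y_{Z_A})$ the index $h$ is the degree of $Y$ (the inner factor), not of $X$, which is in fact what your subsequent identification $X_{F_h(Z_A)}=(c_{X,h_*(Z)})_{F_h(A)}$ uses, so the computation itself is right.
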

\begin{proof}
Since the action of $G$ on $\bb$ is strict, it follows by definition the equations  \begin{itemize}
    \item $g_*(c_{X,Z})=c_{g_*(X),g_*(Z)}$ 
    \item $c_{X,Y\ot Z}=(\id_Y\ot c_{X,Z})\circ (c_{X,Y}\ot \id_{Z})$
    \item $c_{X\ot Y,Z}=(c_{X,h_*(Z)}\ot \id_{Y}) \circ (\id_X\ot c_{Y,Z})$.
\end{itemize}
\end{proof}

\subsection{Example}
Let $\Do=\oplus_{g\in G}\Do_g$ be a faithfully $G$-graded fusion category.

Since every $\Do_g$ is a $\Do_e$-bimodule category, they define 2-functors \[F_g(-):=\Do_g\boxtimes_{\Do_e}(-): \Do_e-\operatorname{Mod}\to \Do_e-\operatorname{Mod},\] the  tensor products $\otimes:\Do_g\times \Do_h\to \Do_{gh}$ induce pseudo-natural equivalences 
$\chi_{g,h}: F_g\circ F_h\to F_{gh}$ and the associator of $\Do$ induce invertible modifications $\omega_{g,h,f}: \chi_{gh,f} \circ (\chi_{g,h} \ot  \idn_{F_f }) \Rightarrow \chi_{g,hf}\circ (\idn_{F_g} \ot\chi_{h,f}),$ that defines an action of $G$ on $\Do_e-\operatorname{Mod}$.  See \cite{ENO3} for details.

In this case the category $\mathcal{Z}_G({}_{\Do_e}\Mod)_g$ is just $\Fun_{\Do_e-\Do_e}(\Do_e,\Do_g)$, the category of $\Do_e$-bimodule functors and natural transformations from $\Do_e$ to $\Do_g$. 
The category $\mathcal{Z}_G({}_{\Do_e}\Mod)_g$ is canonically equivalent to the category $\mathcal{Z}_{\Do_e}(\Do_g)$ defined in \cite[Definition 2.1]{GNN} (use that  $\Fun_{\Do_e}(\Do_e,\Do_g)\to \Do_g, F\mapsto F(\mathbf{1})$  is a category equivalence). Then the $G$-graded category $\mathcal{Z}_G({}_{\Do_e}\Mod)$ is equivalent to the monoidal category $\mathcal{Z}_{\Do}(\Do_e)$. The braided $G$-crossed category $\mathcal{Z}_G({}_{\Do_e}\Mod)$ is equivalent to the $G$-crossed category $\mathcal{Z}_{\Do}(\Do_e)$ defined in \cite{GNN}.

\section{The center of the equivariant 2-category}\label{Section-center}

This section is devoted to prove the following result.
Let $G$ be a finite group acting on
a 2-category $\Bc$.  Recall the forgetful 2-functor $\Phi:\Bc^G\to \Bc$ described in Lemma \ref{forget-2funct}.

\begin{teo}\label{center-equi} The group $G$ acts on $\Zc(\Phi)$ by monoidal autoequivalences, and there is a  monoidal equivalence
$$ \Zc(\Bc ^G)\simeq \Zc(\Phi)^G.$$
\end{teo}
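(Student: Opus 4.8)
The plan is to prove both assertions by reducing to a strict action and then unpacking the two categories into concrete data that match term by term. By the coherence Theorem \ref{coherence-groupact} we may assume the action of $G$ on $\Bc$ is strict, so that $F_gF_h=F_{gh}$, each $F_g$ is a $2$-functor, and $\chi,\omega$ are identities; the unpacked description of $\Bc^G$ from Section \ref{section:unpacking} is then available, and by Proposition \ref{equivalence-1cells-iso} every equivariant $0$-cell $(A,U,\Pi)$ has invertible $U_g\in\Bc(A,F_g(A))$ with genuine inverse $U_g^{-1}$. First I would define the $G$-action on $\Zc(\Phi)$. For $(V,\sigma)\in\Zc(\Phi)$, with $V_{(A,U,\Pi)}\in\Bc(A,A)$, and $g\in G$, set
$$(g_*V)_{(A,U,\Pi)}=U_g^{-1}\circ F_g(V_{(A,U,\Pi)})\circ U_g,$$
and for an equivariant $1$-cell $X=(\theta,\{\theta_h\})\colon(A,U,\Pi)\to(\widetilde A,\widetilde U,\widetilde\Pi)$ define $(g_*\sigma)_X$ by conjugating $F_g(\sigma_X)$ with the equivalences $U_g,\widetilde U_g$ and the structure $2$-cells $\theta_g\colon F_g(\theta)\circ U_g\to\widetilde U_g\circ\theta$; concretely $(g_*\sigma)_X$ is the pasting of $\theta_g^{-1}$, then $F_g(\sigma_X)$, then $\theta_g$. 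Because $F_g$ is a $2$-functor one has $g_*(V\otimes V')=g_*V\otimes g_*V'$ on the nose, so each $g_*$ is strict monoidal; the isomorphisms $\Pi_{g,h}\colon F_g(U_h)\circ U_g\to U_{gh}$ induce by conjugation monoidal natural isomorphisms $\nu_{g,h}\colon g_*h_*\to(gh)_*$, and \eqref{group-act-tc1}--\eqref{group-act-tc2} reduce to the coherence \eqref{a-pi-eq} of the $\Pi$'s. I would then check that $(g_*V,g_*\sigma)$ again satisfies \eqref{relativ-cent1}, using naturality of $\sigma$ together with the $1$-cell axiom \eqref{equiva-1cell} of $X$.

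Next I would build the comparison functor $\mathbb{F}\colon\Zc(\Bc^G)\to\Zc(\Phi)^G$. An object of $\Zc(\Bc^G)$ is a pair $(W,\rho)$ in which each $W_{(A,U,\Pi)}=(w,\{w_g\})$ is an equivariant endomorphism and $\rho$ is a center structure over the equivariant $1$-cells. I set $V_{(A,U,\Pi)}:=\Phi(W_{(A,U,\Pi)})=w$ and $\sigma_X:=\Phi(\rho_X)$; since $\Phi$ is a $2$-functor, $(V,\sigma)\in\Zc(\Phi)$. The structure $2$-cells $w_g\colon F_g(w)\circ U_g\to U_g\circ w$ assemble into isomorphisms $s_g\colon g_*(V,\sigma)\to(V,\sigma)$ in $\Zc(\Phi)$: that each $s_g$ is a morphism in $\Zc(\Phi)$ is exactly the equivariant $2$-cell condition \eqref{equiva-2cell} read off $\rho$, while the cocycle \eqref{group-act-tc3} for $\{s_g\}$ is precisely axiom \eqref{equiva-1cell} of the equivariant endomorphism $W_{(A,U,\Pi)}$ (with $\nu_{g,h}$ supplied by $\Pi_{g,h}$). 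On morphisms, an arrow of $\Zc(\Bc^G)$ is a family of equivariant $2$-cells satisfying \eqref{relativ-cent2}; its underlying $2$-cells satisfy \eqref{relativ-cent2} (centrality) and \eqref{equiva-2cell} (equivariance), hence form a morphism of $\Zc(\Phi)^G$.

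The inverse $\mathbb{G}$ reverses this: given $((V,\sigma),\{s_g\})$ I reconstruct $W_{(A,U,\Pi)}=(V_{(A,U,\Pi)},\{(s_g)_{(A,U,\Pi)}\})$, which is an equivariant endomorphism because \eqref{group-act-tc3} gives back \eqref{equiva-1cell}, and I lift $\sigma$ to an equivariant center structure $\rho$ (the lift exists since each $s_g$ commutes with $\sigma$). The two composites are the identity on the underlying data, so $\mathbb{F}$ and $\mathbb{G}$ are mutually inverse. Finally both tensor products are ``compose pointwise and combine the equivariant structures'': on the $\Bc^G$ side this is \eqref{tensor-prod-relativ} together with \eqref{comp-equiva-1cell}, and on the other side it is the relative-center product together with the equivariantization product, where strict monoidality of $g_*$ ($\xi^g=\id$) makes the two agree; hence $\mathbb{F}$ is strict monoidal. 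Applied to $\Bc={}_\ca\Mod$ this recovers Proposition \ref{cent-forget-relative-center} and the results of \cite{GNN}.

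The step I expect to be the main obstacle is the well-definedness of the $G$-action on $\Zc(\Phi)$: showing that $(g_*V,g_*\sigma)$ again lies in $\Zc(\Phi)$ and that the $\nu_{g,h}$ satisfy the equivariantization axioms is a genuine $2$-categorical pasting computation, since $(g_*\sigma)_X$ intertwines $F_g(\sigma_X)$ with the structure $2$-cell $\theta_g$ of each $1$-cell $X$ and with the $2$-cocycle $\Pi$. Once the action is fixed, the remainder is the bookkeeping identifying \eqref{equiva-1cell} with the cocycle \eqref{group-act-tc3} and \eqref{equiva-2cell} with the equivariant-morphism condition, which is routine.
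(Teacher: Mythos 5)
Your proposal is correct and follows essentially the same route as the paper: reduce to a strict unital action with genuinely invertible $U_g$ (via Theorem \ref{coherence-groupact} and Proposition \ref{equivalence-1cells-iso}), define the action on $\Zc(\Phi)$ by conjugation $U_g^{-1}\circ F_g(-)\circ U_g$ with $\nu_{g,h}$ induced by the $\Pi_{g,h}$, and then match objects and morphisms of $\Zc(\Bc^G)$ and $\Zc(\Phi)^G$ by identifying the equivariant-$1$-cell structure $w_g$ with the equivariant-object structure $s_g$, and \eqref{equiva-1cell}, \eqref{equiva-2cell} with \eqref{group-act-tc3} and the morphism condition in $\Zc(\Phi)$. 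The only cosmetic difference is that you build the comparison functor from $\Zc(\Bc^G)$ to $\Zc(\Phi)^G$ whereas the paper constructs its inverse $\Psi:\Zc(\Phi)^G\to\Zc(\Bc^G)$; the underlying correspondence and all verifications are the same.
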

As a consequence, we have the following result. 
\begin{cor}\cite[Thm. 3.5]{GNN} Let $\Do=\oplus_{g\in G} \ca_g$ be a faithfully graded tensor category, with $\ca=\ca_1$. There is an action of the group $G$ on the relative center $\Zc_{\ca}(\Do)$ and a monoidal equivalence
$$\Zc(\Do) \simeq \Zc_{\ca}(\Do)^G.$$
\end{cor}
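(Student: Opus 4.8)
The plan is to apply Theorem \ref{center-equi} to the $2$-category $\Bc = {}_\ca\Mod^{\opp}$ equipped with the $G$-action constructed in Theorem \ref{g-act-graded-tc}, and then to translate each side of the resulting monoidal equivalence $\Zc(\Bc^G) \simeq \Zc(\Phi)^G$ into the statement of the corollary. Here $\Phi : \Bc^G \to \Bc$ denotes the forgetful $2$-functor of Lemma \ref{forget-2funct}, which sends an equivariant $0$-cell $(\Mo, U, \Pi)$ to its underlying $\ca$-module category $\Mo$.

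First I would identify the left-hand side. By Theorem \ref{g-act-graded-tc} there is a biequivalence $\Bc^G \simeq {}_\Do\Mod$. Since the center $\Zc(-)$ of a $2$-category is invariant under biequivalence as a braided monoidal category, this gives $\Zc(\Bc^G) \simeq \Zc({}_\Do\Mod)$, and Corollary \ref{center-modcat} applied to $\Do$ in place of $\ca$ yields $\Zc({}_\Do\Mod) \simeq \Zc(\Do)$. Hence $\Zc(\Bc^G) \simeq \Zc(\Do)$ as braided monoidal categories.

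Next I would identify the right-hand side, namely $\Zc(\Phi)$. Under the biequivalence $\Bc^G \simeq {}_\Do\Mod$ of Theorem \ref{g-act-graded-tc}, the forgetful $2$-functor $\Phi$ is carried to the restriction pseudofunctor ${}_\Do\Mod \to {}_\ca\Mod^{\opp}$ that forgets the $\Do$-action down to $\ca = \ca_1$; since $\Zc(-)$ of a pseudofunctor is unchanged under pre-composition with a biequivalence, $\Zc(\Phi)$ is computed from this restriction pseudofunctor. Matching it with the forgetful pseudofunctor $\Hc : {}_\Do\Mod \to {}_\ca\Mod$ of Proposition \ref{cent-forget-relative-center} and invoking that proposition gives a monoidal equivalence $\Zc(\Phi) \simeq \Zc_\ca(\Do)$. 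The action of $G$ on $\Zc(\Phi)$ by monoidal autoequivalences provided by Theorem \ref{center-equi} then transports along this equivalence to the asserted action of $G$ on the relative center $\Zc_\ca(\Do)$, and equivariantization carries the equivalence $\Zc(\Phi)\simeq\Zc_\ca(\Do)$ to a monoidal equivalence $\Zc(\Phi)^G \simeq \Zc_\ca(\Do)^G$.

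Combining the two identifications with Theorem \ref{center-equi} gives $\Zc(\Do) \simeq \Zc(\Bc^G) \simeq \Zc(\Phi)^G \simeq \Zc_\ca(\Do)^G$, which is the claim. The main obstacle I expect is the middle step: carefully tracking the biequivalence of Theorem \ref{g-act-graded-tc} in order to verify that the equivariant forgetful $2$-functor $\Phi$ really does correspond to the $\ca$-restriction pseudofunctor underlying Proposition \ref{cent-forget-relative-center}. In particular one must reconcile the passage to opposite module categories (the ${}^{\opp}$ appearing in $\Bc = {}_\ca\Mod^{\opp}$) with the plain forgetful functor $\Hc$, so that the monoidal structures on the two centers are matched and the resulting $G$-actions are compatible.
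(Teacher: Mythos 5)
Your proposal is correct and follows essentially the same route as the paper: the paper's proof is exactly the chain $\Zc(\Do) \simeq \Zc({}_{\Do}\Mod) \simeq \Zc\big(({}_{\ca}\Mod^{\opp})^G\big) \simeq \Zc(\Hc)^G \simeq \Zc_{\ca}(\Do)^G$, citing Corollary \ref{center-modcat}, Theorem \ref{g-act-graded-tc}, Theorem \ref{center-equi}, and Proposition \ref{cent-forget-relative-center} in turn. If anything, you are more careful than the paper, which silently identifies the equivariant forgetful $2$-functor $\Phi$ with the restriction pseudofunctor $\Hc$ under the biequivalence of Theorem \ref{g-act-graded-tc} --- precisely the point you flag as the main step to verify.
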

\pf Let $\Hc: {}_\Do\Mod\to  {}_{\ca}\Mod$ be the forgetful pseudofunctor. Then
$$\Zc(\Do) \simeq \Zc({}_{\Do}\Mod) \simeq \Zc(\big({}_{\ca}\Mod^{\opp}\big)^G) \simeq \Zc(\Hc)^G \simeq \Zc_{\ca}(\Do)^G$$
The first equivalence follow from Corollary \ref{center-modcat}, the second one is Theorem \ref{g-act-graded-tc}, and the last one is Proposition \ref{cent-forget-relative-center}.
\epf

For the rest of this section we shall use the notation introduced in Section \ref{section:unpacking}. There is no harm in assuming that the action is \textit{unital} and \textit{strict}, see definitions \ref{unital-act}, \ref{strict-act}. By Proposition \ref{equivalence-1cells-iso}, we can assume that  any invertible 1-cell is an isomorphism. In particular, if $(A,U,\Pi)$ is an equivariant 0-cell, for any $g\in G$, the 1-cell $U_g$ is invertible. Thus, we can choose a 1-cell $U^*_g$ such that 
$$U_g\circ U^*_g=I_{F_g(A)}, \quad U^*_g\circ U_g=I_A.$$

If $X, Y$ are 1-cells, we shall sometimes denote $X\circ Y=XY$, as a space saving measure.

\subsection{A group action on $\Zc(\Phi)$} For any $g\in G$, we shall define tensor autoequivalences $L_g:\Zc(\Phi)\to \Zc(\Phi)$ such that they define an action of $G$ on $ \Zc(\Phi).$ First, let us explicitly describe objects in $\Zc(\Phi)$. An object $(X,\sigma)\in \Zc(\Phi)$ consists of
$$X=\{X_{(A,U,\Pi)}\in \Bc(A,A) \,\text{ a 1-cell}, (A,U,\Pi)\in \Obj(\Bc^G)\},$$
$$\sigma=\{\sigma_{(\theta,\theta_g)}: X_{(\widetilde A,\widetilde U,\widetilde\Pi)}\circ \theta \Rightarrow \theta\circ X_{(A,U,\Pi)}\, \text{ isomorphisms 2-cells in}\, \Bc^G\}, $$
where $(\theta,\theta_g)\in \Bc^G((A,U,\Pi), (\widetilde A,\widetilde U,\widetilde\Pi))$ is an equivariant 1-cell. The isomorphisms $\sigma_{(\theta,\theta_g)}$ satisfy \eqref{relativ-cent1}. If  $(X,\sigma), (Y,\tau)\in \Zc(\Phi)$, a morphism $f:(X,\sigma)\to (Y,\tau)$ is a collection  of 2-cells in $\Bc(A,A)$
$$f_{(A,U,\Pi)}: X_{(A,U,\Pi)} \Rightarrow Y_{(A,U,\Pi)},$$
such that for any equivariant 1-cell $(\theta,\theta_g)\in \Bc^G((A,U,\Pi), (\widetilde A,\widetilde U,\widetilde\Pi))$
$$\big( \id_{\theta}\circ  f_{(A,U,\Pi)}\big) \sigma_{(\theta,\theta_g)}= \tau_{(\theta,\theta_g)}\big(f_{(\widetilde A,\widetilde U,\widetilde\Pi)} \circ \id_{\theta}\big). $$
\begin{lema}\label{some-morphisms} Suppose $g, h\in G$ and $(A,U,\Pi)$ is an equivariant 0-cell.  There are isomorphisms 2-cells 
$$\epsilon_{g,h,(A,U,\Pi)}: U^*_g\circ F_g(U^*_h)\Rightarrow U^*_{gh}$$
such that 
\begin{equation}\label{epsilon-pi}\epsilon_{g,h,(A,U,\Pi)}\circ \Pi_{g,h}=\id_{I_A}, \quad \Pi_{g,h}\circ\epsilon_{g,h,(A,U,\Pi)}=\id_{I_{F_{gh}(A)}},
\end{equation} 
\begin{equation}\label{epsilon-pi2} \epsilon_{gh,f,(A,U,\Pi)} \big( \epsilon_{g,h,(A,U,\Pi)}\circ \id_{F_{gh}(U^*_f)} \big)=\epsilon_{g,hf,(A,U,\Pi)} \big( \id_{U^*_g}\circ F_g(\epsilon_{h,f,(A,U,\Pi)}) \big),
\end{equation}
for any $g,h,f\in G$.
\end{lema}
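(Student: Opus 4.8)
The plan is to take $\epsilon_{g,h,(A,U,\Pi)}$ to be the \emph{mate} of the invertible 2-cell $\Pi_{g,h}$ with respect to the chosen strict inverse 1-cells. By Proposition~\ref{equivalence-1cells-iso} we have $U_g\circ U^*_g=I_{F_g(A)}$ and $U^*_g\circ U_g=I_A$, and since each $F_g$ is a strict unital 2-functor, applying $F_g$ to these relations shows that $F_g(U^*_h)$ is a strict inverse of $F_g(U_h)$; hence $U^*_g\circ F_g(U^*_h)$ is a strict inverse of $F_g(U_h)\circ U_g$ (recall that in the strict, unital setting $\Pi_{g,h}$ has domain $F_g(U_h)\circ U_g$, the constraint $(\chi^0_{g,h})_A$ being an identity). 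As $\Pi_{g,h}\colon F_g(U_h)\circ U_g\Rightarrow U_{gh}$ is invertible, I define
\[
\epsilon_{g,h,(A,U,\Pi)}:=\id_{U^*_{gh}}\circ\,\Pi_{g,h}^{-1}\,\circ\id_{U^*_g\circ F_g(U^*_h)}.
\]
Using the strict inverse relations to simplify source and target, this is an invertible 2-cell $U^*_g\circ F_g(U^*_h)\Rightarrow U^*_{gh}$, as required.

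First I would establish the two equalities in \eqref{epsilon-pi}. Writing $X=F_g(U_h)\circ U_g$, $Y=U_{gh}$ and $X^{*},Y^{*}$ for their strict inverses, the interchange law gives
\[
\Pi_{g,h}^{-1}\circ\id_{X^{*}}\circ\Pi_{g,h}=\id_{Y},
\]
once the relations $X\circ X^{*}=I_{F_{gh}(A)}$ and $X^{*}\circ X=I_A$ are used to cancel; whiskering on the left by $Y^{*}$ then yields $\epsilon_{g,h}\circ\Pi_{g,h}=\id_{Y^{*}}\circ\id_Y=\id_{I_A}$. The dual computation, whiskering the identity $\Pi_{g,h}\circ\id_{Y^{*}}\circ\Pi_{g,h}^{-1}=\id_X$ on the right by $X^{*}$, gives $\Pi_{g,h}\circ\epsilon_{g,h}=\id_{I_{F_{gh}(A)}}$. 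In particular these identities characterize $\epsilon_{g,h}$ uniquely as the horizontal (two-sided) inverse of $\Pi_{g,h}$.

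The coherence \eqref{epsilon-pi2} I would deduce from the 0-cell axiom \eqref{a-pi-eq}. In the present strict, unital situation all of $(\chi^0_{\bullet,\bullet})_A$, $(\omega_{\bullet,\bullet,\bullet})_A$ and $(\chi_{\bullet,\bullet})_{U_h}$ are identities, so \eqref{a-pi-eq} collapses to the single pentagon
\[
\Pi_{g,hf}\bigl(F_g(\Pi_{h,f})\circ\id_{U_g}\bigr)=\Pi_{gh,f}\bigl(\id_{F_{gh}(U_f)}\circ\Pi_{g,h}\bigr),
\]
an identity of invertible 2-cells $F_{gh}(U_f)\circ F_g(U_h)\circ U_g\Rightarrow U_{ghf}$. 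Passing to mates converts this into \eqref{epsilon-pi2}: the operation $\Pi\mapsto\epsilon$ is functorial for vertical composition, sends a left (resp.\ right) whiskering to the right (resp.\ left) whiskering of the mate by the inverse 1-cell, and commutes with the strict 2-functors $F_g$ (because $F_g$ preserves the chosen strict inverses, so $F_g(\epsilon_{h,f})$ is the mate of $F_g(\Pi_{h,f})$). Applying these compatibilities, the mate of the right-hand side above is $\epsilon_{gh,f}\bigl(\epsilon_{g,h}\circ\id_{F_{gh}(U^*_f)}\bigr)$, the left-hand side of \eqref{epsilon-pi2}, while the mate of the left-hand side above is $\epsilon_{g,hf}\bigl(\id_{U^*_g}\circ F_g(\epsilon_{h,f})\bigr)$, the right-hand side of \eqref{epsilon-pi2}; since the two $\Pi$-expressions coincide, so do their mates.

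I expect the genuine work to lie in this last step, namely establishing that taking mates is compatible with vertical composition, whiskering and the $F_g$. Each such compatibility is a standard fact in the calculus of mates, but its verification here is a bookkeeping exercise with the interchange law and the cancellations $U_g\circ U^*_g=I$, $U^*_g\circ U_g=I$; one must in particular use that the action is \emph{unital}, so that $F_g$ preserves identity 1-cells on the nose and $F_g(U^*_h)$ is a genuine inverse of $F_g(U_h)$, in order for the relevant source and target 1-cells to cancel exactly. The identities in \eqref{epsilon-pi} and the very definition of $\epsilon_{g,h}$ are then formal consequences of the mate construction.
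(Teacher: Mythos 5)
Your proposal is correct and follows essentially the same route as the paper: the paper likewise defines $\epsilon_{g,h,(A,U,\Pi)}$ by whiskering $\Pi^{-1}_{g,h}$ with the strict inverse 1-cells (it takes $\epsilon_{g,h,(A,U,\Pi)}=\id_{U^*_g\circ F_g(U^*_h)}\circ \Pi^{-1}_{g,h}\circ \id_{U^*_{gh}}$, which by your own uniqueness-of-two-sided-horizontal-inverse observation is the same 2-cell as your opposite-side whiskering) and then deduces \eqref{epsilon-pi2} directly from \eqref{a-pi-eq}. Your write-up simply makes explicit the interchange-law cancellations and the mate-calculus compatibilities that the paper's two-line proof leaves implicit.
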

\pf Take $\epsilon_{g,h,(A,U,\Pi)}=\id_{U^*_g\circ F_g(U^*_h)}\circ \Pi^{-1}_{g,h}\circ \id_{U^*_{gh}}.$  Equation \eqref{epsilon-pi2} follow from \eqref{a-pi-eq}.
\epf
For any $g\in G$, let us define the functors $L_g:\Zc(\Phi)\to \Zc(\Phi)$, $L_g(X,\sigma)=(X^g,\sigma^g)$. Where, for any equivariant 0-cell $(A,U,\Pi)$
$$X^g_{(A,U,\Pi)}=U^*_g\circ F_g(X_{(A,U,\Pi)})\circ U_g.$$
\begin{rmk} As a saving space measure, if $(A,U,\Pi), (\widetilde A,\widetilde U,\widetilde\Pi)$ are equivariant 0-cells, we are going to denote $X=X_{(A,U,\Pi)}$, $\widetilde X=X_{(\widetilde A,\widetilde U,\widetilde\Pi)}$. Also, we shall denote 
$\epsilon_{g,h}=\epsilon_{g,h,(A,U,\Pi)}$ and $\widetilde \epsilon_{g,h}=\epsilon_{g,h,(\widetilde A,\widetilde U,\widetilde\Pi)}$ when no confusion arises. 
\end{rmk}
If $(\theta,\theta_g)\in \Bc^G((A,U,\Pi),(\widetilde A,\widetilde U,\widetilde\Pi))$ is an equivariant 1-cell, then
$$\sigma^g_{(\theta,\theta_g)}=\big( 1_{\widetilde U^{*}_g}\circ \theta_g\circ 1_{U^*_gF_g(X)U_g} \big)\big(1_{\widetilde  U^*_g}\circ F_g(\sigma_{(\theta,\theta_g)}) \circ1_{U_g} \big)\big(1_{\widetilde U_gF_g(\widetilde X)} \circ \theta^{-1}_g\big).$$
If $f:(X,\sigma)\to (Y,\tau)$ is a morphism in $\Zc(\Phi)$, then
\[L_g(f)_{(A,U,\Pi)}=\id_{U^*_g}\circ F_g(f_{(A,U,\Pi)})\circ\id_{U_g}.\]
The proof of the next result follows  straightforwardly. 
\begin{prop} The functors $L_g:\Zc(\Phi)\to \Zc(\Phi)$ are well-defined monoidal functors. \qed
\end{prop}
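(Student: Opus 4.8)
The plan is to verify three things in turn: that $(X^g,\sigma^g)$ is again an object of $\Zc(\Phi)$, that $L_g$ is functorial, and that it is (strictly) monoidal. Throughout I will use the two structural simplifications already in force: each $F_g$ is a \emph{strict} $2$-functor, so $F_g(X\circ Y)=F_g(X)\circ F_g(Y)$ and $F_g$ preserves identities and both compositions on the nose; and, by Proposition \ref{equivalence-1cells-iso}, each $U_g$ is a genuine isomorphism with $U_g\circ U^*_g=I_{F_g(A)}$ and $U^*_g\circ U_g=I_A$. Note also that since $\Phi$ is a $2$-functor (Lemma \ref{forget-2funct}), the coherence \eqref{relativ-cent1} for objects of $\Zc(\Phi)$ has trivial associators and reduces to $\sigma_{I}=\id$ together with $\sigma_{(\theta,\theta_g)\circ(\psi,\psi_g)}=(\id_\theta\circ\sigma_{(\psi,\psi_g)})(\sigma_{(\theta,\theta_g)}\circ\id_\psi)$.

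For well-definedness I first observe that $\sigma^g_{(\theta,\theta_g)}$ is an isomorphism $2$-cell of the correct type $X^g_{(\widetilde A,\widetilde U,\widetilde\Pi)}\circ\theta\Rightarrow\theta\circ X^g_{(A,U,\Pi)}$: reading its defining formula from right to left, the outer factors $\theta_g^{\pm1}$ strip and reintroduce the copies of $U_g,\widetilde U_g$ (using $U_g\circ U^*_g=I$ and $\widetilde U^*_g\circ\widetilde U_g=I$ to cancel the intermediate bridges), while the middle factor transports the half-braiding through $F_g$. Naturality of $\sigma^g$ in the equivariant $1$-cell follows from naturality of $\sigma$, functoriality of $F_g$ on $2$-cells, and the equivariant $2$-cell condition \eqref{equiva-2cell}. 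The identity clause $\sigma^g_{I}=\id$ is immediate, since for the identity equivariant $1$-cell one has $\theta=I_A$, $\theta_g=\id$, and $\sigma_{I}=\id$.

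The heart of the argument, and the step I expect to be the main obstacle, is the compatibility of $\sigma^g$ with horizontal composition of equivariant $1$-cells, i.e.\ the second clause of \eqref{relativ-cent1} for $\sigma^g$. Here one must expand $\sigma^g$ on a composite $(\theta,\theta_g)\circ(\psi,\psi_g)$, substitute the composition rule $(\theta\circ\psi)_g=(\theta_g\circ\id_\psi)(\id_{F_g(\theta)}\circ\psi_g)$ from \eqref{comp-equiva-1cell} for the conjugating $2$-cells, and then interleave the hexagon \eqref{relativ-cent1} for $\sigma$ itself with the strict functoriality of $F_g$. The delicate bookkeeping is to see that the inserted bridges $U_g\circ U^*_g=I$ cancel in exactly the pattern that reassembles the two copies of $\sigma^g$; the interchange law for horizontal and vertical composition of $2$-cells is what licenses the necessary reorderings.

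Functoriality of $L_g$ is then routine: $L_g(f)=\id_{U^*_g}\circ F_g(f)\circ\id_{U_g}$ satisfies \eqref{relativ-cent2} by applying $F_g$ to \eqref{relativ-cent2} for $f$ and conjugating, and $L_g$ preserves vertical composition and identities because $F_g$ does and conjugation by a fixed isomorphism is strict. Finally, for monoidality, the identities $U_g\circ U^*_g=I_{F_g(A)}$ and $F_g(I_A)=I_{F_g(A)}$ give, at the level of underlying $1$-cells,
$$X^g\circ Y^g=U^*_g F_g(X)(U_g U^*_g)F_g(Y)U_g=U^*_g F_g(X\circ Y)U_g=(X\circ Y)^g,\qquad (\uno)^g=U^*_g\circ U_g=I_A,$$
and a parallel (shorter) computation, again using cancellation of the bridges, shows that $(\sigma\otimes\tau)^g$ coincides with $\sigma^g\otimes\tau^g$ as computed from \eqref{tensor-prod-relativ}. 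Hence the comparison isomorphisms may be taken to be identities and $L_g$ is strict monoidal, completing the proof.
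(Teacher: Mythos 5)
Your proposal is correct. The paper offers no proof of this proposition at all --- it is stated with the remark that it ``follows straightforwardly'' and closed immediately --- so your direct verification is exactly the routine computation the authors leave to the reader, and it invokes the right standing simplifications: the $F_g$ are strict 2-functors, each $U_g$ is strictly invertible by Proposition \ref{equivalence-1cells-iso}, and the constraints of the 2-functor $\Phi$ in \eqref{relativ-cent1} are trivial. The one step you describe rather than compute, the composition clause of \eqref{relativ-cent1} for $\sigma^g$, does close exactly as you indicate: expanding with \eqref{comp-equiva-1cell}, the factors $\theta_g^{\pm1}$ and $\psi_g^{\pm1}$ slide past the conjugated half-braidings by the interchange law, and the bridges $U_g\circ U_g^{*}=I$ cancel to reassemble $(\id_\theta\circ\sigma^g_{(\psi,\psi_g)})(\sigma^g_{(\theta,\theta_g)}\circ\id_\psi)$; the only small addendum worth recording is that strict monoidality also requires $L_g(f\otimes f')=L_g(f)\otimes L_g(f')$ on morphisms, which is the same one-line bridge-cancellation argument.
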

Now, for any $g,h\in G$, we shall define monoidal natural isomorphisms $\nu_{g,h}: L_g\circ L_h\to L_{gh}$ satisfying \eqref{group-act-tc1} and \eqref{group-act-tc2}.  Take $(X,\sigma)\in  \Zc(\Hc)$, so we must define an arrow 
$$ (\nu_{g,h})_{(X,\sigma)}: L_g\circ L_h(X,\sigma)\to L_{gh}(X,\sigma).$$
For each equivariant 0-cell $(A,U,\Pi)$ we  define the map
$$\!\big((\nu_{g,h})_{(X,\sigma)} \big)_{(A,U,\Pi)}: U^{*}_gF_g(U^{*}_h)F_{gh}(X_{(A,U,\mu)})F_g(U_{h})U_{g}\to U_{gh}F_{gh}(X_{(A,U,\mu)})U^{*}_{gh},$$
$$\big((\nu_{g,h})_{(X,\sigma)} \big)_{(A,U,\mu)}=\epsilon_{g,h}\circ\id_{F_{gh}(X_{(A,U,\mu)})}\circ \Pi_{g,h}. $$
\begin{prop} For any $g,h,f\in G$, the following assertions holds.
\begin{itemize}
\item[(i)] $\nu_{g,h}: L_g\circ L_h\to L_{gh}$ are well-defined natural isomorphisms in $\Zc(\Phi)$.
\item[(ii)] $\nu_{g,h}: L_g\circ L_h\to L_{gh}$ are monoidal natural transformations.

\item[(iii)] For any  $g,h,f\in G$ and any $(X,\sigma)\in  \Zc(\Phi)$, the following equation holds
\begin{equation}\label{assoc-mon-l}
(\nu_{gh,f})_{(X,\sigma)}(\nu_{g,h})_{L_f(X,\sigma)}=(\nu_{g,hf})_{(X,\sigma)} L_g((\nu_{h,f})_{(X,\sigma)}).
\end{equation} 
\end{itemize}
\end{prop}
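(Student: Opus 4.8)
The plan is to dispatch the three assertions in turn, isolating the genuine structural inputs---the relations \eqref{epsilon-pi} and \eqref{epsilon-pi2} of Lemma \ref{some-morphisms}, the cocycle \eqref{a-pi-eq} for the $\Pi$'s, and the equivariance axiom \eqref{equiva-1cell}---from the bookkeeping, which is everywhere just the interchange law for horizontal and vertical composition of $2$-cells. Throughout I use that the action is strict ($F_gF_h=F_{gh}$, each $F_g$ a $2$-functor) and, having invoked Proposition \ref{equivalence-1cells-iso}, the strict cancellations $U_g\circ U^*_g=I_{F_g(A)}$ and $U^*_g\circ U_g=I_A$. Abbreviating $X=X_{(A,U,\Pi)}$ as in the Remark, each component $\big((\nu_{g,h})_{(X,\sigma)}\big)_{(A,U,\Pi)}=\epsilon_{g,h}\circ\id_{F_{gh}(X)}\circ\Pi_{g,h}$ is a horizontal composite of isomorphism $2$-cells, hence invertible with inverse $\epsilon_{g,h}^{-1}\circ\id\circ\Pi_{g,h}^{-1}$; this already yields the ``isomorphism'' half of (i).

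For the rest of (i) I would check two things. \emph{Naturality in $(X,\sigma)$}: for a morphism $\mathbf f\colon(X,\sigma)\to(Y,\tau)$ of $\Zc(\Phi)$ both $L_gL_h(\mathbf f)$ and $L_{gh}(\mathbf f)$ are pure whiskerings of $F_{gh}(\mathbf f)$, so the naturality square for $\nu_{g,h}$ is the interchange of $F_{gh}(\mathbf f)$ past the $\epsilon_{g,h}$- and $\Pi_{g,h}$-whiskerings, which act on disjoint outer factors; no use of the $\sigma$-compatibility of $\mathbf f$ is needed. \emph{That $\nu_{g,h}$ is a morphism in $\Zc(\Phi)$}: this is condition \eqref{relativ-cent2} for $L_gL_h(X,\sigma)$ and $L_{gh}(X,\sigma)$, tested on an arbitrary equivariant $1$-cell $(\theta,\theta_g)$. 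Applying the defining formula for $\sigma^g$ twice shows that the structure $2$-cell of $L_gL_h(X,\sigma)$ is conjugation of $F_{gh}(\sigma_{(\theta,\theta_g)})$ by the outer $\theta_g$ and the inner $F_g(\theta_h)$, whereas that of $L_{gh}(X,\sigma)$ conjugates the same $F_{gh}(\sigma_{(\theta,\theta_g)})$ by $\theta_{gh}$. Since $\nu_{g,h}$ supplies exactly $\epsilon_{g,h}$ and $\Pi_{g,h}$ on the two flanks, the required identity is precisely that $\theta_{gh}(\id\circ\Pi_{g,h})$ and $(\widetilde\Pi_{g,h}\circ\id)(\id\circ\theta_g)(F_g(\theta_h)\circ\id)$ agree, i.e. the equivariance axiom \eqref{equiva-1cell} with $f=h$, combined with the relations \eqref{epsilon-pi}. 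I expect this to be the main obstacle: it is the only step where the $\sigma$-datum, the $\theta$-datum, and the $\epsilon,\Pi$-data all meet at once, and the pairing of the iterated $\theta_g,\theta_h$ against $\theta_{gh}$ has to be read off carefully from \eqref{equiva-1cell}.

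Assertion (ii) is more mechanical once one notes that each $L_g$ is \emph{strictly} monoidal: by $U_g\circ U^*_g=I_{F_g(A)}$ one has $L_g(X\ot Y)_{(A,U,\Pi)}=U^*_g\circ F_g(X)\circ F_g(Y)\circ U_g=\big(L_g(X)\ot L_g(Y)\big)_{(A,U,\Pi)}$ on the nose (and the $\sigma$-structures match likewise, as recorded in the preceding Proposition), so the monoidal constraints of $L_g$, hence of $L_gL_h$ and $L_{gh}$, are identities. Monoidality of $\nu_{g,h}$ thus reduces to $(\nu_{g,h})_{\uno}=\id$ and $(\nu_{g,h})_{X\ot Y}=(\nu_{g,h})_X\ot(\nu_{g,h})_Y$. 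The first follows from $F_{gh}(I_A)=I_{F_{gh}(A)}$ and $\epsilon_{g,h}\circ\Pi_{g,h}=\id_{I_A}$ in \eqref{epsilon-pi}. For the second, the tensor of $2$-cells in \eqref{tensor-prod-relativ} is horizontal composition, so $\big((\nu_{g,h})_X\ot(\nu_{g,h})_Y\big)_{(A,U,\Pi)}$ is the horizontal word $\epsilon_{g,h}\circ\id_{F_{gh}(X)}\circ\Pi_{g,h}\circ\epsilon_{g,h}\circ\id_{F_{gh}(Y)}\circ\Pi_{g,h}$, whose middle factor $\Pi_{g,h}\circ\epsilon_{g,h}$ equals $\id_{I_{F_{gh}(A)}}$ by the second relation in \eqref{epsilon-pi}; the word collapses to $\epsilon_{g,h}\circ\id_{F_{gh}(X\ot Y)}\circ\Pi_{g,h}=(\nu_{g,h})_{X\ot Y}$.

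Finally, for (iii) I would evaluate both sides of \eqref{assoc-mon-l} at a fixed equivariant $0$-cell $(A,U,\Pi)$ and expand. Since each $F_g$ is a $2$-functor, $F_g\big((\nu_{h,f})_{(X,\sigma),(A,U,\Pi)}\big)=F_g(\epsilon_{h,f})\circ\id_{F_{ghf}(X)}\circ F_g(\Pi_{h,f})$ and $F_{gh}\big(L_f(X)_{(A,U,\Pi)}\big)=F_{gh}(U^*_f)\circ F_{ghf}(X)\circ F_{gh}(U_f)$. Each side of \eqref{assoc-mon-l} is then a vertical composite of two horizontal words sharing the untouched middle $\id_{F_{ghf}(X)}$, and by the interchange law it splits into two independent columns: a \emph{left} column in the $U^*$-factors, $\epsilon_{gh,f}\,(\epsilon_{g,h}\circ\id_{F_{gh}(U^*_f)})$ against $\epsilon_{g,hf}\,(\id_{U^*_g}\circ F_g(\epsilon_{h,f}))$, and a \emph{right} column in the $U$-factors, $\Pi_{gh,f}\,(\id_{F_{gh}(U_f)}\circ\Pi_{g,h})$ against $\Pi_{g,hf}\,(F_g(\Pi_{h,f})\circ\id_{U_g})$. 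The left column is exactly the $\epsilon$-cocycle \eqref{epsilon-pi2}, and the right column is the strict form of \eqref{a-pi-eq} after the relabelling $f,g,h\mapsto g,h,f$; hence the two sides of \eqref{assoc-mon-l} coincide. Thus (iii) is a formal consequence of the coherence already recorded in Lemma \ref{some-morphisms} and in \eqref{a-pi-eq}, requiring no further input.
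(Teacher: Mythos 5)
Your proof is correct and follows essentially the same route as the paper's: part (i) rests on unpacking $(\sigma^h)^g$ and $\sigma^{gh}$ and invoking \eqref{equiva-1cell} together with \eqref{epsilon-pi}, part (ii) on strictness of $L_g$ and the cancellation $\Pi_{g,h}\circ\epsilon_{g,h}=\id$, and part (iii) on the interchange-law splitting into an $\epsilon$-column handled by \eqref{epsilon-pi2} and a $\Pi$-column handled by the (relabelled) cocycle \eqref{a-pi-eq}. The only additions are your explicit checks of invertibility, naturality, and the unit constraint, which the paper leaves implicit.
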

\pf (i). We must verify  that $(\nu_{g,h})_{(X,\sigma)}$ are morphisms in the category $\Zc(\Phi)$, that is, equation
\begin{equation}\label{nu-morhisms-incat} 
\big(\id_{\theta} \circ  \big((\nu_{g,h})_{(X,\sigma)} \big)_{(A,U,\mu)}\big) ((\sigma^h)^g)_{(\theta,\theta_g)}=\sigma^{gh}_{(\theta,\theta_g)} \big( \big((\nu_{g,h})_{(X,\sigma)} \big)_{(\widetilde A,\widetilde U,\widetilde\Pi)}\circ\id_{\theta} \big)
\end{equation}
is fulfilled for any equivariant 1-cell $(\theta,\theta_g)\in \Bc^G((A,U,\Pi), (\widetilde A,\widetilde U,\widetilde\Pi))$. The left hand side of \eqref{nu-morhisms-incat} equals to
\begin{align*}&= \big( \id_\theta\circ\epsilon_{g,h}\circ\id_{F_{gh}(X)}\circ \Pi_{g,h} \big)\big(\id_{\widetilde U^{*}_g}\circ \theta_g\circ\id_{ U^{*}_g F_g(U^{*}_h)F_{gh}(X)F_g(U_h)U_g}  \big)\\
&\big( \id_{\widetilde U^{*}_g}\circ F_g(\sigma^h_{(\theta,\theta_g)})\circ\id_{U_g} \big)\big(\id_{\widetilde U^{*}_g}\circ \theta_g\circ \id_{U^{*}_gF_g(U^{*}_h)F_{gh}(X)F_g(U_h)U_g}  \big)\\
&= \big( \id_\theta\circ\epsilon_{g,h}\circ\id_{F_{gh}(X)}\circ \Pi_{g,h} \big)\big(\id\circ \theta_g\circ\id \big)
\big( \id_{\widetilde U^{*}_g F_g(\widetilde U^{*}_h)}\circ F_g(\theta_h) \circ\id\big)\\ & \big( \id_{\widetilde U^{*}_g F_g(\widetilde U^{*}_h)}\circ F_{gh}(\sigma_{(\theta,\theta_g)})\circ\id_{F_g(U_h)U_g} \big)\big(  \id_{\widetilde U^{*}_g F_g(\widetilde U^{*}_h)}\circ F_g(\theta^{-1}_h)\circ\id_{U_g}\big)\\&\big(\id_{\widetilde U^{*}_g}\circ \theta_g\circ \id_{U^{*}_gF_g(U^{*}_h)F_{gh}(X)F_g(U_h)U_g}  \big)\\
&=\big( \id\circ \epsilon_{g,h}\circ \id \big)\big(\id_{\widetilde U^{*}_g F_g(\widetilde U^{*}_h)}\circ (\id_{F_g(\widetilde U_h)}\circ \theta_g)(F_g(\theta_h)\circ \id_{U_g})\circ \id_{U^{*}_gF_g(U^{*}_h)F_{gh}(X)U_{gh}}  \big)\\&\big( \id_{\widetilde U^{*}_g F_g(\widetilde U^{*}_h)}\circ F_{gh}(\sigma_{(\theta,\theta_g)})\circ\id_{U_{gh}} \big)\\
&\big(\id_{\widetilde U^{*}_g F_g(\widetilde U^{*}_h)F_{gh}(\widetilde X)}\circ (\id_{F_{gh}(\theta)}\circ \Pi_{g,h})(F_g(\theta^{-1}_h)\circ \id_{U_g})(\id_{F_{gh}(\widetilde U_h)}\circ \theta^{-1}_g)  \big)\\
&=\big( \id\circ \epsilon_{g,h}\circ \id \big)\big(\id_{\widetilde U^{*}_g F_g(\widetilde U^{*}_h)}\circ (\id_{F_g(\widetilde U_h)}\circ \theta_g)(F_g(\theta_h)\circ \id_{U_g})\circ \id  \big)\\&\big( \id_{\widetilde U^{*}_g F_g(\widetilde U^{*}_h)}\circ F_{gh}(\sigma_{(\theta,\theta_g)})\circ\id_{U_{gh}} \big) \big( \id\circ \theta^{-1}_{gh} (\widetilde \Pi_{g,h}\circ \id_\theta) \big)
\end{align*}

The second equation follows from the definition of $\sigma^h_{(\theta,\theta_g)}$, the fourth equality follows from \eqref{equiva-1cell}. The right hand side of \eqref{nu-morhisms-incat} equals to
\begin{align*}&=\big(\id_{\widetilde U^{*}_{gh}} \circ \theta_{gh}\circ \id_{U^{*}_{gh}F_{gh}(X) U_{gh}} \big)\big( \id_{\widetilde U^{*}_{gh}}\circ F_{gh}(\sigma_{(\theta,\theta_g)})\circ  \id_{U_{gh}} \big)\\& \big(\id_{\widetilde U^{*}_{gh} F_{gh}(\widetilde X)}\circ  \theta^{-1}_{gh} \big)\big(\widetilde  \epsilon_{g,h}\circ  \id_{F_{gh}(\widetilde X)} \circ \widetilde \Pi_{g,h}\circ \id_{\theta}\big)\\
&=\big(\widetilde  \epsilon_{g,h}\circ \theta_{gh}\circ\id_{U^{*}_{gh}F_{gh}(X)U_{gh} }  \big)\big( \id_{U^{*}_gF_g(U^{*}_h)}\circ F_{gh}(\sigma_{(\theta,\theta_g)})\circ\id_{U_{gh}} \big)\\ &\big(\id_{U^{*}_gF_g(U^{*}_h)F_{gh}(\widetilde X)}\circ \theta^{-1}_{gh} ( \widetilde \Pi_{g,h}\circ \id_\theta) \big).
\end{align*}
It follows from Equation \eqref{epsilon-pi} that both sides are equal.
\medbreak

(ii). Let $(X,\sigma), (Y,\tau)$ be objects in $\Zc(\Phi)$. Since the functors $L_g$ are strict, this means that $L_g((X,\sigma)\ot (Y,\tau))=L_g(X,\sigma)\ot  L_g(Y,\tau)$, we must prove that
\begin{equation}\label{nu-mon1}  (\nu_{g,h})_{(X,\sigma)\ot (Y,\tau)}= (\nu_{g,h})_{(X,\sigma)}\ot (\nu_{g,h})_{(X,\sigma)}.
\end{equation}
Let $(A,U,\Pi)$ be an equivariant 0-cell. The left hand side of \eqref{nu-mon1} evaluated in $(A,U,\Pi)$ equals to
$$\epsilon_{g,h}\circ \id_{F_{gh}(X_{(A,U,\Pi)})}\circ\Pi_{g,h}\circ\epsilon_{g,h}\circ  \id_{F_{gh}(Y_{(A,U,\Pi)})}\circ \Pi_{g,h}.$$
The right hand side of \eqref{nu-mon1} evaluated in $(A,U,\Pi)$ equals to
$$\epsilon_{g,h}\circ \id_{F_{gh}(X_{(A,U,\Pi)}\circ Y_{(A,U,\Pi)})}\circ\Pi_{g,h} .$$
It follows from \eqref{epsilon-pi} that both sides are equal.
\medbreak

(iii). Let $(A,U,\Pi)$ be an equivariant 0-cell. The left hand side of \eqref{assoc-mon-l} evaluated in $(A,U,\Pi)$ is equal to
\begin{align*}&=\big(\epsilon_{gh,f}\circ\id_{F_{ghf}(X)}\circ \Pi_{gh,f} \big)\big( \epsilon_{g,h}\circ\id_{F_{gh}(U^*_fXU_f)}\circ \Pi_{g,h}\big)\\
&=\epsilon_{gh,f} (\epsilon_{g,h}\circ\id_{F_{gh}(U^*_f)} )\circ \id_{F_{ghf}(X)}\circ \Pi_{gh,f} ( \id_{F_{gh}(U_f)}\circ \Pi_{g,h}).
\end{align*}
The right hand side of \eqref{assoc-mon-l} evaluated in $(A,U,\Pi)$ is equal to
\begin{align*}&= \big( \epsilon_{g,hf} \circ \id_{F_{gh}(X)}\circ \Pi_{g,hf}\big)\big(\id_{U^*_g}\circ F_g(\epsilon_{h,f}) \circ\id_{F_{ghf}(X)}\circ F_g(\Pi_{h,f})\circ\id_{U_g} \big)\\
&=\epsilon_{g,hf} (\id_{U^*_g}\circ F_g(\epsilon_{h,f}))\circ \id_{F_{ghf}(X)} \circ \Pi_{g,hf}(F_g(\Pi_{h,f})\circ\id_{U_g}).
\end{align*}
Now, that both expressions are equal follow by \eqref{epsilon-pi2} and \eqref{a-pi-eq}.
\epf
\subsubsection{Proof of Theorem \ref{center-equi}}
Let us first describe an object in the equivariantization of the category $ \Zc(\Phi).$ An object in  $ \Zc(\Phi)^G$  is a collection $((X,\sigma),s)$ where $(X,\sigma)\in \Zc(\Phi)$, and $s_g: L_g(X,\sigma)\to (X,\sigma)$ is a morphism in the category, for any $g\in G$. This means, that $X_{(A,U,\Pi)}\in \Bc(A,A)$ is a 1-cell, for any equivariant 0-cell $(A,U,\Pi)$, and for any equivariant 1-cell $(\tau,\tau_g)\in \Bc^G((A,U,\Pi), (\widetilde A,\widetilde U,\widetilde\Pi))$ there is an isomorphism $\sigma_{(\tau,\tau_g)}: X_{(\widetilde A,\widetilde U,\widetilde\Pi)}\circ \tau\to \tau\circ X_{(A,U,\Pi)}$ such that equation \eqref{relativ-cent1} is fulfilled. Also, for any $g\in G$ and any equivariant 0-cell $(A,U,\Pi)$
there are  morphisms 
$$(s_g)_{(A,U,\Pi)}: U^*_g F_g(X_{(A,U,\Pi)}) U_g\to V_{(A,U,\Pi)},$$
such that 
\begin{equation}\label{equivariant-ob-s0} 
\big( \id_\tau\circ (s_g)_{(A,U,\Pi)}\big)\sigma^g_{(\tau,\tau^1)}= \sigma_{(\tau,\tau^1)}\big((s_g)_{(\widetilde A,\widetilde U,\widetilde\Pi)}\circ \id_\tau \big),
\end{equation}
\begin{equation}\label{equivariant-ob-s}
(s_{gh})_{(A,U,\Pi)} (\nu_{g,h})_{(A,U,\Pi)} = (s_g)_{(A,U,\Pi)}  L_g((s_h)_{(A,U,\Pi)}),
\end{equation}
for any equivariant 0-cells $(A,U,\Pi),  (\widetilde A,\widetilde U,\widetilde\Pi) $, any equivariant 1-cell $(\tau,\tau_g)\in \Bc^G((A,U,\Pi), (\widetilde A,\widetilde U,\widetilde\Pi))$, and any $g, h\in G$. Equation \eqref{equivariant-ob-s0} follows from the fact that $s_g: L_g(V,\sigma)\to (V,\sigma)$ is a morphism in the category $\Zc(\Phi)$, and equation \eqref{equivariant-ob-s} follows from \eqref{group-act-tc3}.
\medbreak

Define the functor $\Psi:\Zc(\Phi)^G\to \Zc(\Bc^G)$ as follows. Let $((X,\sigma),s)\in\Zc(\Phi)^G $, then  $\Phi((X,\sigma),s)=(V,\widetilde \sigma)$. For any equivariant 0-cell $(A,U,\Pi)$, $V_{(A,U,\Pi)}$ must be an equivariant 1-cell in the category $\Bc^G((A,U,\Pi),(A,U,\Pi))$. Define $V_{(A,U,\Pi)}=(X_{(A,U,\Pi)}, \theta^{(A,U,\Pi)}_g)$, where
\begin{align}\label{def-thetag}\begin{split}\theta^{(A,U,\Pi)}_g: F_g(X_{(A,U,\Pi)})\circ U_g \Rightarrow U_g\circ X_{(A,U,\Pi)},\\
\theta^{(A,U,\Pi)}_g=\id_{U_g}\circ  (s_g)_{(A,U,\Pi)}.\end{split}
\end{align} 
If $(\tau,\tau_g)\in \Bc^G((A,U,\Pi), (\widetilde A,\widetilde U,\widetilde\Pi))$ is an equivariant 1-cell, then 
$$\widetilde \sigma_{(\tau,\tau_g)}: (X_{(\widetilde A,\widetilde U,\widetilde\Pi)}, \theta^{(\widetilde A,\widetilde U,\widetilde\Pi)}_g)\circ (\tau,\tau_g) \Rightarrow (\tau,\tau_g) \circ (X_{(A,U,\Pi)}, \theta^{(A,U,\Pi)}_g),$$
$$\widetilde \sigma_{(\tau,\tau_g)}= \sigma_{(\tau,\tau_g)}.$$
\begin{claim} The following statements hold.
\begin{itemize}
\item[(i)] $V_{(A,U,\Pi)}=(X_{(A,U,\Pi)}, \theta^{(A,U,\Pi)}_g)\in \Bc^G$, for any equivariant 0-cell $(A,U,\Pi)$.

\item[(ii)] The object $(V,\widetilde \sigma)$ belongs to the category $ \Zc(\Bc^G).$ In particular, the functor $\Psi$ is well-defined.

\item[(iii)] The functor $ \Psi:\Zc(\Phi)^G\to \Zc(\Bc^G)$ is an equivalence of categories, and it has a monoidal structure.
\end{itemize}
\end{claim}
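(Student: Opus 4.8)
The entire argument rests on one observation. Because the action is strict and, by Proposition~\ref{equivalence-1cells-iso}, each structure $1$-cell $U_g$ of an equivariant $0$-cell is an isomorphism with two-sided inverse $U^*_g$, whiskering by $U_g$ gives a bijection between $2$-cells $s_g\colon U^*_g F_g(X) U_g \Rightarrow X$ and $2$-cells $\theta_g\colon F_g(X)\circ U_g \Rightarrow U_g\circ X$, namely $\theta_g=\id_{U_g}\circ s_g$ with inverse $s_g=\id_{U^*_g}\circ\theta_g$; the two zig-zag identities \eqref{epsilon-pi} make these mutually inverse. The plan is to show that under this correspondence the axioms defining an object of $\Zc(\Phi)^G$ turn, term by term, into the axioms defining an object of $\Zc(\Bc^G)$, and conversely. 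Each of the three parts is an instance of this translation, the only real work being to apply the zig-zags at the right places.

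For \textbf{(i)} I would check the equivariant $1$-cell axiom \eqref{equiva-1cell} for $(X_{(A,U,\Pi)},\theta^{(A,U,\Pi)}_g)$. As $\chi$ and $\omega$ are identities and source and target $0$-cells coincide, \eqref{equiva-1cell} collapses to the identity $\theta_{gf}\,(\id_{F_{gf}(X)}\circ\Pi_{g,f})=(\Pi_{g,f}\circ\id_X)(\id_{F_g(U_f)}\circ\theta_g)(F_g(\theta_f)\circ\id_{U_g})$. Substituting $\theta_g=\id_{U_g}\circ s_g$ and whiskering the cocycle condition \eqref{equivariant-ob-s} by the appropriate copies of $U_{gf}$ and $U^*_{gf}$, I would expand $\nu_{g,f}$ and $L_g(s_f)$ through their definitions and cancel the resulting $U^*,U$ pairs by means of \eqref{epsilon-pi} and \eqref{epsilon-pi2}; what is left is precisely the displayed equation. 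The normalization $\theta_1=\id_X$ follows from $U_1=I_A$ together with $s_1=\id$.

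For \textbf{(ii)} there are two points. First, $\widetilde\sigma_{(\tau,\tau_g)}:=\sigma_{(\tau,\tau_g)}$ must be an equivariant $2$-cell \eqref{equiva-2cell} between the composite equivariant $1$-cells $V_{(\widetilde A,\widetilde U,\widetilde\Pi)}\circ(\tau,\tau_g)$ and $(\tau,\tau_g)\circ V_{(A,U,\Pi)}$. Expanding their structure $2$-cells by \eqref{comp-equiva-1cell}, inserting $\theta_g=\id_{U_g}\circ s_g$ and the explicit form of $\sigma^g_{(\tau,\tau_g)}$ given by the $L_g$-action on morphisms, and cancelling the pairs $\widetilde U_g\widetilde U^*_g$ and $U_gU^*_g$, reduces \eqref{equiva-2cell} to exactly \eqref{equivariant-ob-s0}, which holds because $s_g$ is a morphism in $\Zc(\Phi)$. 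Second, the center axioms \eqref{relativ-cent1} for $\widetilde\sigma$ are inherited verbatim from those for $\sigma$: since $\Phi$ is the strict forgetful $2$-functor and composition of equivariant $1$-cells lies over composition in $\Bc$, one has $\widetilde\sigma_{(\tau,\tau_g)\circ(\tau',\tau'_g)}=\sigma_{\tau\circ\tau'}$ and $\widetilde\sigma$ sends identities to identities. Thus $(V,\widetilde\sigma)\in\Zc(\Bc^G)$, and on morphisms $\Psi$ is the identity assignment, so it is well defined.

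For \textbf{(iii)} I would exhibit the inverse directly: a $(V,\widetilde\sigma)\in\Zc(\Bc^G)$ with $V_{(A,U,\Pi)}=(X_{(A,U,\Pi)},\theta_g)$ is sent to $((X,\widetilde\sigma),s)$ with $s_g:=\id_{U^*_g}\circ\theta_g$. Running the computations of (i) and (ii) in reverse shows this lands in $\Zc(\Phi)^G$, while the bijection $\theta_g\leftrightarrow s_g$ yields $\Psi\circ\Psi^{-1}=\Id$ and $\Psi^{-1}\circ\Psi=\Id$ on the nose, so $\Psi$ is an equivalence. For monoidality, both tensor products are horizontal composition of $1$-cells; an interchange-law computation using $U_gU^*_g=I_{F_g(A)}$ identifies $\id_{U_g}\circ(s_g\circ t_g)$ with the composite structure $2$-cell $(X\circ Y)_g$ produced by \eqref{comp-equiva-1cell}, the center parts tensor by the same rule \eqref{tensor-prod-relativ} in both categories, and the unit of $\Zc(\Phi)^G$ maps to the identity equivariant $1$-cell; hence $\Psi$ is strict monoidal. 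I expect the main obstacle to be the bookkeeping in (ii): aligning the lengthy expression for $\sigma^g_{(\tau,\tau_g)}$ with \eqref{comp-equiva-1cell} and \eqref{equivariant-ob-s0} demands careful, repeated use of the zig-zag identities \eqref{epsilon-pi}.
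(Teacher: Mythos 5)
Your proposal is correct and follows essentially the same route as the paper: define $\theta_g=\id_{U_g}\circ s_g$, reduce the equivariant $1$-cell axiom \eqref{equiva-1cell} to the cocycle condition \eqref{equivariant-ob-s} via \eqref{epsilon-pi}, reduce the equivariant $2$-cell condition \eqref{equiva-2cell} to \eqref{equivariant-ob-s0} using the definition of $\sigma^g_{(\tau,\tau_g)}$ and \eqref{comp-equiva-1cell}, and observe that $\Psi$ is strictly monoidal and invertible via $s_g=\id_{U^*_g}\circ\theta_g$ (your part (iii) usefully makes explicit the inverse that the paper dismisses as ``follows easily''). One pedantic remark: the mutual inverseness of the two whiskerings comes directly from the strict invertibility $U_g\circ U^*_g=I_{F_g(A)}$, $U^*_g\circ U_g=I_A$ rather than from \eqref{epsilon-pi}, which instead relates $\epsilon_{g,h}$ to $\Pi_{g,h}$.
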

\pf[Proof of Claim] (i). We must check that the maps $\theta^{(A,U,\Pi)}_g)$ satisfy \eqref{equiva-1cell}. In this case, we must prove that for any $g,h\in G$
$$ \big(\Pi_{g,h}\circ\id_{X_{(A,U,\Pi)}}  \big)\big( \id_{F_g(U_h)}\circ \theta^{(A,U,\Pi)}_g\big)\big( F_g(\theta^{(A,U,\Pi)}_h)\circ \id_{U_g}\big)$$
is equal to
$$\theta^{(A,U,\Pi)}_{gh}\big(\id_{F_{gh}(X_{(A,U,\Pi)})}  \circ \Pi_{g,h}\big).
$$
Using the definition of $\theta^{(A,U,\Pi)}_g$, we get that the first expression is equal to
\begin{align*}&\big(\Pi_{g,h}\circ\id_{X_{(A,U,\Pi)}}  \big)\big( \id_{F_g(U_h)U_g}\circ (s_g)_{(A,U,\Pi)}\big) \big( \id_{F_g(U_h)}\circ F_g((s_h)_{(A,U,\Pi)})\circ \id_{U_g}\big)\\
&=\big(\Pi_{g,h}\circ\id_{X_{(A,U,\Pi)}}  \big) \big( \id_{F_g(U_h)U_g}\circ (s_g)_{(A,U,\Pi)} (\id_{U^*_g\circ F_g((s_h)_{(A,U,\Pi)})}) \circ \id_{U_g}\big)\\
&=\big(\Pi_{g,h}\circ\id_{X_{(A,U,\Pi)}}  \big) \big(   \id_{F_g(U_h)U_g}\circ  (s_{gh})_{(A,U,\Pi)} (\nu_{g,h})_{(A,U,\Pi)} \big)\\
&=\big( \id_{U_{gh}}\circ (s_{gh})_{(A,U,\Pi)} \big)\big(  \Pi_{g,h}\circ (\nu_{g,h})_{(A,U,\Pi)}\big)= \theta^{(A,U,\Pi)}_{gh} \big(\id_{F_{gh}(X_{(A,U,\Pi)})}  \circ \Pi_{g,h}\big).
\end{align*}
The second equality follows from \eqref{equivariant-ob-s}, and the last one follows from \eqref{epsilon-pi}.
\medbreak

(ii). Since $\widetilde \sigma_{(\tau,\tau_g)}= \sigma_{(\tau,\tau_g)}$ for any equivariant 1-cell $(\tau,\tau_g)$, then   $\widetilde \sigma$ satisfy \eqref{relativ-cent1}. We must verify only that $\widetilde \sigma_{(\tau,\tau_g)}$ is an equivariant 2-cell, that is  \eqref{equiva-2cell} is satisfied. To simplify the notation, let us denote $\theta^{(A,U,\Pi)}_{g}=\theta_g, \theta^{(\widetilde A,\widetilde U,\widetilde\Pi)}=\widetilde\theta_g. $ In this particular case, using the composition of equivariant 1-cells given by \eqref{comp-equiva-1cell}, we have to prove that
\begin{equation}\label{sigma-morph-incat} \big(1_{\widetilde U_g}\circ \sigma_{(\tau,\tau_g)} \big)\big(\widetilde \theta_{g}\circ 1_\tau \big)\big( 1_{F_g(\widetilde X)} \circ \tau_g \big)=\big( \tau_g \circ 1_X \big)\big( 1_{F_g(\tau)}\circ  \theta_{g}\big)\big( F_g(\sigma_{(\tau,\tau_g)})\circ 1_{U_g}\big).
\end{equation}
The left hand side of equation \eqref{sigma-morph-incat} is equal to
\begin{align*}&=\big( 1_{\widetilde U_g}\circ \sigma_{(\tau,\tau_g)} \big)\big( 1_{\widetilde U_g}\circ  (s_g)_{(A,U,\Pi)}\big)\big(1_{F_g(\widetilde X)}\circ \tau_g  \big)\\
&=\big( 1_{\widetilde U_g}\circ (1_\tau\circ  (s_g)_{(A,U,\Pi)}) \sigma^g_{(\tau,\tau_g)} \big)\big(1_{F_g(\widetilde X)}\circ \tau_g  \big)\\
&=\big( 1_{\widetilde U_g}\circ (s_g)_{(A,U,\Pi)} \big) \big( \tau_g\circ 1_{U^*_gF_g(X)U_g} \big)\big( F_g(\sigma_{(\tau,\tau_g)})\circ 1_{U_g} \big)\\
&=\big( \tau_g \circ 1_X \big)\big( 1_{F_g(\tau)}\circ  \theta_{g}\big)\big( F_g(\sigma_{(\tau,\tau_g)})\circ 1_{U_g}\big).
\end{align*}
The first equality follows by using the definition of $\theta^{(A,U,\Pi)}_{g}$ given in \eqref{def-thetag}, the second equality follows from \eqref{equivariant-ob-s0}, and the third one follows from the definition of $\sigma^g_{(\tau,\tau_g)}$.
\medbreak

(iii). The fact that $\Psi$ is an equivalence follows easily. A direct computation shows that 
$$ \Psi\big( ((X,\sigma),s)\ot ((Y,\tau),t) \big)= \Psi((X,\sigma),s) \ot \Psi((Y,\tau),t),$$
for any pair of objects $((X,\sigma),s), ((Y,\tau),t) \in \Zc(\Phi)^G$.
\epf

\end{document}